\newtheorem{theorem}{}[section]
\newtheorem{lemma}[theorem]{}
\newtheoremstyle{styleclaim}{}{}{\itshape}{}{}{}{ }{(\thmnumber{#2})}
\theoremstyle{styleclaim}
\newcommand{\cref}[1]{(\ref{#1})}
\begin{document}

\title{Three-coloring graphs with no induced seven-vertex path I : the triangle-free case}
\date{\today}
\author{Maria Chudnovsky\thanks{Columbia University, New York, NY 10027, USA. E-mail: mchudnov@columbia.edu. Partially supported by NSF grants IIS-1117631, DMS-1001091 and DMS-1265803.}, Peter Maceli\thanks{Columbia University, New York, NY 10027, USA. E-mail: plm2109@columbia.edu.}, and Mingxian Zhong\thanks{Columbia University, New York, NY 10027, USA. E-mail: mz2325@columbia.edu.}}

\maketitle

\begin{abstract}

In this paper, we give a polynomial time algorithm which determines if a given triangle-free graph with no induced seven-vertex path is $3$-colorable, and gives an explicit coloring if one exists.

\end{abstract}

\section{Introduction}

We start with some definitions.
All graphs in this paper are finite and simple. Let $G$ be a graph and $X$ be a subset of $V(G)$. We denote by $G[X]$ \textit{the subgraph of $G$ induced by $X$}, that is, the subgraph of $G$ with vertex set $X$ such that two vertices are adjacent in $G[X]$ if and only if they are adjacent in $G$. We denote by 
$G\setminus X$ the graph $G[V(G) \setminus X]$. If $X=\{v\}$ for some $v \in V(G)$, we write $G \setminus v$ instead of $G \setminus \{v\}$.
Let $H$ be a graph. If $G$ has no induced subgraph isomorphic to $H$, then we say that $G$ is \textit{$H$-free}. For a family $\mathcal{F}$ of graphs, we say that $G$ is \textit{$\mathcal{F}$-free} if $G$ is $F$-free for every $F\in \mathcal{F}$. If $G$ is not $H$-free, then \textit{$G$ contains $H$}. If $G[X]$ is isomorphic to $H$, then we say that \textit{$X$ is an $H$ in $G$}. 

For $n\geq 0$, we denote by \textit{$P_{n+1}$ the path with $n+1$ vertices and length $n$}, that is, the graph with distinct vertices $\{p_0,p_1,...,p_n\}$ such that $p_i$ is adjacent to $p_j$ if and only if $|i-j|=1$. For $n\geq 3$, we denote by \textit{$C_n$ the cycle of length $n$}, that is, the graph with distinct vertices $\{c_1,...,c_n\}$ such that $c_i$ is adjacent to $c_j$ if and only if $|i-j|=1$ or $n-1$. By convention, when explicitly describing a path or a cycle, we always list the vertices in order. Let $G$ be a graph. When $G[\{p_0,p_1,...,p_n\}]$ is the path $P_{n+1}$, we say that \textit{$p_0-p_1-...-p_n$ is a $P_{n+1}$ in $G$}. Similarly, when $G[\{c_1,c_2,...,c_n\}]$ is the cycle $C_n$, we say that \textit{$c_1-c_2-...-c_n-c_1$ is a $C_n$ in $G$}. For $n\geq 3$, an \textit{n-gon} in a graph $G$ is an induced subgraph of $G$ isomorphic to $C_n$. We also refer to a cycle of length three as a \textit{triangle}.  Lastly, suppose $C$ is a 6-gon in $G$ given by $v_0-v_1-v_2-v_3-v_4-v_5-v_0$. We say that $(C,p)$, or  $v_0-v_1-v_2-v_3-v_4-v_5-v_0$ with $p$, is a \textit{shell} in $G$, provided that $p\in V(G)\setminus\{v_0,...,v_5\}$ is such that $N(p)\cap\{v_0,...,v_5\}=\{v_\ell,v_{\ell+3}\}$ for some $\ell\in \{0,1,2\}$. The shell is drawn in Figure \ref{S7}.

\begin{figure}\label{S7}
\begin{center}

\begin{tikzpicture}

\GraphInit[vstyle=Simple]

 \tikzset{VertexStyle/.style = {shape = circle,fill = black,minimum size = 7pt,inner sep=0pt}}

\Vertex[LabelOut,Lpos=180,x=-1.732,y=-1]{$v_4$}

\Vertex[LabelOut,Lpos=270,x=0,y=-2]{$v_3$}

\Vertex[LabelOut,Lpos=0,x=1.732,y=-1]{$v_2$}

\Vertex[LabelOut,Lpos=0,x=1.732,y=1]{$v_1$}

\Vertex[LabelOut,Lpos=90,x=0,y=2]{$v_0$}

\Vertex[LabelOut,Lpos=180,x=-1.732,y=1]{$v_5$}

\Vertex[LabelOut,Lpos=180,x=0,y=0]{$p$}

\Edges($v_4$,$v_3$,$v_2$,$v_1$,$v_0$,$v_5$,$v_4$)
\Edges($v_3$,$p$,$v_0$)

\end{tikzpicture}

\end{center}

\caption{The shell.}
\end{figure}
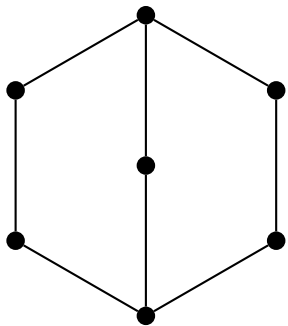

A \textit{$k$-coloring} of a graph $G$ is a mapping $c:V(G)\rightarrow \{1,...,k\}$ such that if $x,y\in V(G)$ are adjacent, then $c(x)\ne c(y)$. If a $k$-coloring exists for a graph $G$, we say that the $G$ is \textit{$k$-colorable}. The COLORING problem is determining the smallest integer $k$ such that a given graph is $k$-colorable, and was one of the initial problems R.M.Karp \cite{KARP} showed to be NP-complete. For fixed $k\geq 1$, the $k$-COLORING problem is deciding whether a given graph is $k$-colorable. Since Stockmeyer \cite{STOCK} showed that for any $k\geq 3$ the $k$-COLORING problem is NP-complete, there has been much interest in deciding for which classes of graphs coloring problems can be solved in polynomial time. In this paper, the general approach that we consider is to fix a graph $H$ and consider the $k$-COLORING problem restricted to the class of $H$-free graphs. 

We call a graph \textit{acyclic} if it is $C_n$-free for all $n\geq 3$. The \textit{girth} of a graph is the length of its shortest cycle, or infinity if the graph is acyclic. Kami\'nski and Lozin \cite{CYCLES} proved:

\begin{lemma}\label{HFREE}

For any fixed $k,g\geq 3$, the $k$-COLORING problem is NP-complete for the class of graphs with girth at least $g$.

\end{lemma}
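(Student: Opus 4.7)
The plan is to reduce from the general $k$-COLORING problem, which is NP-complete for $k\geq 3$ by Stockmeyer's theorem \cite{STOCK}. Given an input graph $G$, I would construct in polynomial time a graph $G^*$ of girth at least $g$ that is $k$-colorable if and only if $G$ is. The key device is an \emph{edge gadget}: a graph $H$ of girth at least $g$ with two distinguished vertices $a,b$ such that (i) every $k$-coloring of $H$ satisfies $c(a)\neq c(b)$, and (ii) every assignment of distinct colors to $\{a,b\}$ extends to a proper $k$-coloring of all of $H$. Given such an $H$, the reduction replaces each edge $uv\in E(G)$ by a fresh copy of $H$, identifying $a$ with $u$ and $b$ with $v$, and discards the original edges of $G$.

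For the existence of $H$, I would invoke Erd\H{o}s's classical probabilistic theorem, which supplies graphs of arbitrarily high chromatic number and girth. Starting from a $(k+1)$-critical graph $F$ of girth at least $g+1$, deleting an edge $xy$ produces a \emph{same-color} gadget $H_0$ in which every $k$-coloring forces $c(x)=c(y)$, for otherwise restoring $xy$ would yield a $k$-coloring of $F$, contradicting $\chi(F)=k+1$. Attaching a new pendant vertex $b$ adjacent to $y$, and taking $a:=x$, converts $H_0$ into the desired edge gadget $H$: the same-color constraint on $\{x,y\}$ combined with the edge $yb$ enforces $c(a)\neq c(b)$, while extendibility holds because, by symmetry among the $k$ color classes, $H_0$ admits a $k$-coloring with $c(x)=c(y)=i$ for every color $i$.

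It remains to verify that $G^*$ has girth at least $g$; the $k$-colorability biconditional is immediate edge by edge from (i) and (ii). A cycle in $G^*$ either lies inside a single copy of $H$, in which case it has length at least $g+1$ since $H$ inherits its girth from $F$, or it traverses at least two copies of $H$, using in each copy an $a$-to-$b$ path. The shortest such path has length at least $g+1$, because the shortest $x$-to-$y$ path in $H_0$ has length at least $g$ (else, with the deleted edge $xy$, it would close into a cycle of length less than $g+1$ in $F$), and one additional edge reaches $b$. Thus every cycle in $G^*$ has length at least $g$. The main obstacle is the existence of the gadget $H$, which rests on Erd\H{o}s's non-constructive probabilistic theorem (or on more involved explicit constructions via incidence structures); once $H$ is in hand, the remaining girth-bookkeeping when gadgets are glued is routine.
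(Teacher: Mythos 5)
The paper does not prove this lemma at all --- it is quoted as a known result of Kami\'nski and Lozin \cite{CYCLES} --- so there is no in-paper argument to compare against; judged on its own, your reduction is correct and is essentially the standard gadget argument for results of this type. The key verifications all go through: an edge-minimal subgraph with chromatic number $k+1$ of an Erd\H{o}s-type graph of girth at least $g+1$ is $(k+1)$-edge-critical and keeps the girth bound (you assert the existence of a $(k+1)$-critical $F$ directly from Erd\H{o}s; strictly one should add the one-line extraction of a critical subgraph, noting subgraphs cannot have smaller girth); deleting $xy$ then forces $c(x)=c(y)$ in every $k$-coloring while criticality plus color symmetry gives, for each color $i$, a $k$-coloring with $c(x)=c(y)=i$, so the pendant vertex $b$ yields exactly properties (i) and (ii); the shortest $x$--$y$ distance in $F-xy$ is at least $g$, so cycles of $G^*$ confined to one copy have length at least $g+1$ and cycles meeting two or more copies have length at least $2(g+1)$; and since $k$ and $g$ are fixed, the gadget is a constant-size object, so non-constructiveness of its existence does not affect polynomial-time computability of the reduction, as you note. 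Two cosmetic omissions: membership of $k$-COLORING in NP (needed for completeness rather than mere hardness) should be stated, and one should remark that the original edges of $G$ being discarded is what prevents short cycles of $G$ from surviving --- which you do. Nothing in the argument fails.
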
 

\noindent As a consequence of \ref{HFREE}, it follows that if the graph $H$ contains a cycle, then for any fixed $k\geq 3$, the $k$-COLORING problem is NP-complete for the class of $H$-free graphs. The \textit{claw} is the graph with vertex set $\{a_0, a_1, a_2, a_3\}$ and edge set $\{a_0a_1, a_0a_2, a_0a_3\}$. A theorem of Holyer \cite{LINE} together with an extension due to Leven and Galil \cite{LINE1} imply the following:

\begin{lemma}\label{CLAW}

If a graph $H$ contains the claw, then for every $k\geq 3$, the $k$-COLORING problem is NP-complete for the class of $H$-free graphs. 

\end{lemma}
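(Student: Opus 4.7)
The plan is to reduce vertex $k$-coloring on $H$-free graphs from $k$-edge-coloring via the line graph construction, using the cited results of Holyer and Leven--Galil as the source of NP-hardness.

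First I would recall that Holyer \cite{LINE} showed that the $3$-edge-coloring problem is NP-complete on cubic graphs, and that Leven and Galil \cite{LINE1} extended this to show that for each fixed $k\geq 3$, the $k$-edge-coloring problem is NP-complete on $k$-regular graphs. I would then use the elementary observation that a proper $k$-edge-coloring of a graph $F$ is the same thing as a proper $k$-vertex-coloring of the line graph $L(F)$. Therefore the $k$-vertex-coloring problem is NP-complete when the input is restricted to line graphs of $k$-regular graphs.

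Next I would use the classical fact that every line graph is claw-free: in $L(F)$, the neighbors of a vertex corresponding to an edge $e=uv$ of $F$ split into two cliques (the edges of $F$ incident to $u$ and the edges incident to $v$), so no independent set of size three exists in the neighborhood of any vertex. Combining this with the previous step, $k$-COLORING is NP-complete on the class of claw-free graphs for every fixed $k\geq 3$.

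Finally, I would transfer this hardness to $H$-free graphs whenever $H$ contains the claw. Since $H$ contains the claw as an induced subgraph, any graph $G$ that contains $H$ (as an induced subgraph) also contains the claw; equivalently, every claw-free graph is $H$-free. Hence the class of claw-free graphs is a subclass of the class of $H$-free graphs, and a polynomial-time algorithm for $k$-COLORING on $H$-free graphs would in particular solve $k$-COLORING on claw-free graphs in polynomial time, which is impossible unless $\mathrm{P}=\mathrm{NP}$. There is no real obstacle here: the statement is essentially a packaging of the cited results with the line-graph/claw-free observation, and the only subtlety is being explicit about the induced-subgraph conventions so that claw-free $\subseteq$ $H$-free follows at once from the hypothesis on $H$.
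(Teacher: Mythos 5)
Your proof is correct and is exactly the standard argument the paper alludes to when it cites Holyer and Leven--Galil: $k$-edge-coloring of $k$-regular graphs reduces to $k$-coloring of line graphs, line graphs are claw-free, and claw-free graphs are $H$-free whenever $H$ contains the claw. The paper states the lemma without proof, and your write-up supplies precisely the intended derivation.
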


\noindent Hence, the remaining problem of interest is deciding the $k$-COLORING problem for the class of $H$-free graphs where $H$ is a fixed acyclic claw-free graph. It is easily observed that every component of an acyclic claw-free graph is a path. And so, we focus on the $k$-COLORING problem for the class of $H$-free graphs where $H$ is a connected acyclic claw-free graph, that is, simply a path. Ho\`{a}ng, Kami\'nski, Lozin, Sawada, and Shu \cite{P5} proved the following:

\begin{lemma}

For every $k$, the $k$-COLORING problem can be solved in polynomial time for the class of $P_5$-free graphs. 

\end{lemma}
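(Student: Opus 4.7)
My plan is to prove the stronger statement that \emph{list} $k$-coloring can be solved in polynomial time on the class of $P_5$-free graphs: given $G$ together with, for each $v\in V(G)$, a list $L(v)\subseteq\{1,\ldots,k\}$, decide whether there is a proper coloring $c$ with $c(v)\in L(v)$ for every $v$. The $k$-COLORING problem is the special case $L(v)=\{1,\ldots,k\}$. I would proceed by induction on $k$; the base case is list $2$-coloring, which on any graph reduces to $2$-SAT (one boolean per vertex, unit clauses for singleton lists, and a pair of disjunctive $2$-clauses per edge) and is therefore polynomial.

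For the inductive step I would reduce to connected graphs (handling components separately) and invoke the structural theorem of Bacs\'o and Tuza: every connected $P_5$-free graph $G$ has a dominating set $D$ such that $G[D]$ is either a clique or a three-vertex path. If $G$ contains a clique of size exceeding $k$, then $G$ is not $k$-colorable and we are done; otherwise every clique of $G$, in particular any dominating clique, has size at most $k$, so $|D|\leq\max(k,3)$. Such a $D$ can be located in polynomial time by brute-force enumeration of all $O(n^{\max(k,3)})$ small subsets and checking the clique or $P_3$ condition along with dominance. Next, branch over all list-compatible colorings of $D$, of which there are at most $k^{\max(k,3)}$; for each branch, delete from the list of every vertex in $V(G)\setminus D$ the color of each of its neighbors in $D$. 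Because $D$ is dominating, every remaining vertex loses at least one color, so every residual list has size at most $k-1$. The residual instance is a list coloring of the (still) $P_5$-free graph $G\setminus D$ in which every list has size at most $k-1$, and this is solvable in polynomial time by the induction hypothesis.

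The main obstacle I expect is finding a structural tool strong enough to shrink every list by at least one color in a single reduction step: without the Bacs\'o--Tuza dominating-clique-or-$P_3$ theorem, one cannot decrease the list size uniformly across all residual vertices and the induction breaks. A secondary issue is the base case, which requires the observation that list $2$-coloring is polynomial via $2$-SAT. Given these two ingredients the analysis is routine: the recursion has depth at most $k$, each level contributes a factor of $n^{O(k)}$ from enumerating $D$ and its colorings, and $k$ is fixed, so the total running time is of the form $n^{f(k)}$.
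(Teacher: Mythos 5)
The paper does not actually prove this lemma; it is quoted from Ho\`ang, Kami\'nski, Lozin, Sawada and Shu (reference [P5]), so your argument can only be compared with that external proof. Your skeleton --- Bacs\'o--Tuza dominating clique or dominating $P_3$, brute-force location of the dominating set, branching over its at most $k^{\max(k,3)}$ colorings, and a $2$-SAT base case for lists of size at most two --- is indeed the standard opening of that line of attack, but your induction does not close, and the place where it fails is exactly the technical crux of the theorem.

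Concretely: your induction hypothesis for $k-1$ concerns lists contained in $\{1,\dots,k-1\}$. After you color the dominating set $D$ and update, each residual list has size at most $k-1$, but the lists are arbitrary subsets of the $k$-element palette $\{1,\dots,k\}$, with different vertices possibly missing different colors; such an instance is not an instance of list $(k-1)$-coloring, and no renaming of colors can make it one, so the induction hypothesis does not apply. The natural repair is to induct on the maximum list size $\ell$ with the palette $\{1,\dots,k\}$ fixed, but then the inductive step breaks: a vertex $v$ with $|L(v)|=\ell<k$ need not lose a color when its dominating neighbor $d$ is colored, since $c(d)$ may not belong to $L(v)$ --- your claim that ``every remaining vertex loses at least one color'' is only valid when $L(v)$ is the full palette, i.e.\ at the top level of the recursion. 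Hence from the second level on the recursion makes no guaranteed progress on list sizes; and if you instead let the recursion bottom out by vertex deletion alone, its depth is governed by $|V(G)|$, so the $k^{O(k)}$ branching per level yields exponential time. This mismatch between palette size and list size is precisely what the published proof (and later proofs of the stronger list version) must work hard to overcome; it is not the routine bookkeeping your final paragraph suggests, so as written the proposal has a genuine gap.
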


\noindent Additionally, Randerath and Schiermeyer \cite{P6} showed that:

\begin{lemma}

The 3-COLORING problem can be solved in polynomial time for the class of $P_6$-free graphs. 

\end{lemma}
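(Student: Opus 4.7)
The plan is to reduce $3$-COLORING on a connected $P_6$-free graph $G$ to (polynomially many instances of) LIST $3$-COLORING in which every list has size at most $2$, which in turn reduces to $2$-SAT and is therefore polynomial. The engine of this reduction is a \emph{small dominating structure}: I would aim to show that every connected $P_6$-free graph $G$ contains an induced subgraph $D$ with two properties: (i) $D$ is dominating, meaning every vertex of $V(G) \setminus D$ has a neighbor in $D$; and (ii) the number of proper $3$-colorings of $G[D]$ that one needs to try is polynomial in $|V(G)|$.

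Once such a $D$ is in hand, the algorithm is straightforward. Enumerate each candidate $3$-coloring $c$ of $G[D]$. For every vertex $v \in V(G)\setminus D$, set $L(v) = \{1,2,3\}\setminus\{c(u) : u\in N(v)\cap D\}$; since $D$ is dominating this neighborhood is nonempty, and hence $|L(v)|\leq 2$. Then solve the resulting LIST COLORING instance on $G\setminus D$ with lists of size at most $2$ using a standard translation to $2$-SAT (for each $v$ with $|L(v)|=2$ introduce a Boolean variable encoding which of the two colors is chosen; each edge $uv$ with $c(u)=c(v)$ forbidden becomes a $2$-clause). The original graph is $3$-colorable if and only if some extension succeeds, so the total running time is polynomial provided step (ii) above holds.

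The main obstacle is the structural step, namely locating $D$. For connected $P_6$-free graphs the natural candidate is a dominating induced \emph{path or clique-like} structure: one can show, by a BFS argument using the absence of $P_6$, that $G$ has a dominating induced subgraph that is either an induced path on at most five vertices, or a complete bipartite subgraph (more generally a structure with only polynomially many proper $3$-colorings up to symmetry). Specifically, start from a longest induced path $P = p_0\!-\!p_1\!-\!\cdots\!-\!p_k$ in $G$; since $G$ is $P_6$-free, $k\leq 4$, and any vertex not dominated by $P$ would, together with a BFS neighbor chain back to $P$, create a $P_6$, a contradiction. This yields a dominating set of bounded size in many cases; the residual cases (where the path argument is not enough) are handled by identifying a dominating complete bipartite subgraph whose $3$-colorings can still be listed in polynomial time, because $3$-coloring a complete bipartite graph amounts to choosing a $2$-partition of each side. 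Establishing this dichotomy cleanly is the delicate part and is where the $P_6$-free hypothesis is used essentially.
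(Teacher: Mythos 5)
A preliminary remark: the paper does not prove this lemma at all --- it is quoted from Randerath and Schiermeyer \cite{P6} --- so your proposal can only be judged on its own terms, and on those terms it has a genuine gap at its structural core. The claim that a longest induced path $P=p_0\!-\!\cdots\!-\!p_k$ in a connected $P_6$-free graph must be dominating (``any vertex not dominated by $P$ would, together with a BFS neighbor chain back to $P$, create a $P_6$'') is simply false. Take the net: a triangle $a,b,c$ with pendant vertices $a',b',c'$ attached to $a,b,c$ respectively. This graph is even $P_5$-free, a longest induced path is $a'\!-\!a\!-\!b\!-\!b'$, and $c'$ is not dominated by it; no contradiction with $P_6$-freeness arises because the graph contains no long induced path at all. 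The dichotomy that is actually available (every connected $P_6$-free graph has a dominating induced $C_6$ or a dominating complete bipartite subgraph) is a nontrivial theorem in the literature, not a BFS observation, so the ``delicate part'' you defer is in fact the entire content of the structural step.

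More seriously, even granting a dominating complete bipartite subgraph $D$ with sides $A$ and $B$, the step ``enumerate each candidate $3$-coloring $c$ of $G[D]$'' is not polynomial: in a proper $3$-coloring the color sets of $A$ and $B$ are disjoint, so one side is monochromatic, but the other side may be colored with two colors in exponentially many ways. If instead you only guess the color-set partition (say $A$ gets $\{1\}$ and $B$ gets $\{2,3\}$), the vertices of $B$ are not assigned fixed colors, so a vertex outside $D$ all of whose neighbors in $D$ lie in $B$ retains the full list $\{1,2,3\}$; the resulting instance does not have all lists of size at most $2$, and the reduction to $2$-SAT (\ref{check}) does not apply. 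This is exactly the obstruction the present paper flags in its introduction --- the bounded-dominating-set method works for $P_\ell$-free graphs with $\ell\leq 5$ but ``needs to be modified'' for $\ell\geq 6$ --- and it is what the palette-reduction machinery of \ref{lemma1} and \ref{lemma2} is built to overcome in the $P_7$, triangle-free setting; Randerath and Schiermeyer likewise need substantial further structural analysis beyond your sketch to handle vertices whose lists cannot be shrunk by the dominating structure alone. As written, the proposal is an outline of the standard strategy rather than a proof: both the existence of the dominating structure and the polynomial-time extension step remain unestablished.
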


\noindent While, Huang \cite{4P7} recently showed that:

\begin{lemma}

The following problems are NP-complete:
\begin{enumerate}

\item The $5$-COLORING problem is NP-complete for the class of $P_6$-free graphs. 

\item The $4$-COLORING problem is NP-complete for the class of $P_7$-free graphs. 

\end{enumerate}

\end{lemma}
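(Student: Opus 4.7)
The plan is to prove both parts by polynomial-time reduction from a known NP-complete problem; membership in NP is immediate since a candidate coloring can be verified in polynomial time. A natural starting point is $3$-SAT (or its not-all-equal variant), as it provides clean variable/clause structure for the reductions.

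Given a 3-CNF formula $\phi$ with variables $x_1,\ldots,x_n$ and clauses $C_1,\ldots,C_m$, I would build a graph $G_\phi$ from three kinds of pieces. First, a color-palette clique on vertices $c_1,\ldots,c_k$ (with $k=5$ in part 1 and $k=4$ in part 2), which by symmetry fixes which ``color'' each palette vertex receives. Second, for each variable $x_i$, a literal gadget whose principal vertices $t_i,f_i$ are joined to all but two palette vertices, so that the two ``free'' colors play the role of true and false, and any proper $k$-coloring of $G_\phi$ induces a truth assignment on the $x_i$. Third, for each clause $C_j$, a clause gadget (whose exact structure depends on $k$) attached to the three literal vertices of $C_j$, designed so that the gadget admits a valid completion if and only if at least one literal vertex of $C_j$ receives its ``true'' color.

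The main steps would be: (i) verify by direct inspection that any satisfying assignment of $\phi$ extends to a proper $k$-coloring of $G_\phi$; (ii) verify conversely that any proper $k$-coloring of $G_\phi$, restricted to the literal vertices, encodes a satisfying assignment; and (iii) verify that $G_\phi$ contains no induced $P_6$ for part 1, respectively no induced $P_7$ for part 2. For step (iii) I would classify the possible ``types'' a hypothetical induced path can traverse (palette, literal, or clause-gadget vertices) and argue case by case that such a path either is shortcut by a chord to the palette or simply cannot reach the forbidden length because the interconnections between gadgets are short.

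The principal obstacle will be step (iii): balancing the edge set so that the graph is $P_6$-free (respectively $P_7$-free) while still faithfully encoding the clause constraints. Adding more edges from the literal and clause gadgets to the palette is the natural way to kill long induced paths, but it tends to over-constrain the coloring and destroy the intended correspondence; adding too few allows induced paths to hop across several gadgets. The delicate adjustment -- for example choosing exactly which palette vertex each literal vertex is non-adjacent to, and coordinating these choices across variables sharing a clause -- is what makes the construction nontrivial, and is where I expect to spend most of the effort. Part 1 (larger palette but shorter forbidden path) and part 2 (smaller palette but longer forbidden path) present the same tension in different proportions, and I would expect to design the two gadgets separately rather than unify them.
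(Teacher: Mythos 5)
This statement is not proved in the paper at all: it is quoted as a known theorem of Huang (reference [4P7] in the bibliography), so there is no in-paper argument to compare against. Judged on its own terms, your proposal is a plan rather than a proof, and the gap is exactly where you say you expect to spend your effort. The NP-membership remark and the generic template (a palette clique fixing the colors, variable vertices joined to all but two palette vertices, clause gadgets attached to literal vertices) are the routine part; the entire content of the theorem is the existence of clause gadgets that simultaneously (a) enforce ``at least one literal is true,'' and (b) keep the whole graph $P_6$-free, respectively $P_7$-free. You do not exhibit such gadgets, and you do not carry out the case analysis in step (iii) for any concrete construction, so no part of the hard direction is actually established. Note also that the standard clause gadgets for $k$-coloring hardness do contain long induced paths, and simply joining gadget vertices to more palette vertices tends to force colorings that break the equivalence with satisfiability -- a tension you name but do not resolve; resolving it is precisely Huang's contribution (his construction, roughly, encodes literals and clauses so that every long path is forced through vertices with many neighbors in a fixed dominating structure, and the verification of $P_t$-freeness is a substantial case analysis, not an afterthought).

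So the proposal should be assessed as having a genuine gap: without explicit gadgets and a completed freeness argument, the reduction is not a proof of either item. If your goal is to use this lemma (as the paper does, to delimit which cases of $3$-COLORING remain open), the appropriate move is to cite Huang's paper rather than to re-derive it; if your goal is to actually reprove it, the construction and the induced-path analysis must be written out in full, and you should expect the two cases ($k=5$, $P_6$-free and $k=4$, $P_7$-free) to require genuinely different gadgets, as you anticipate.
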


\noindent Thus, the remaining open cases of the $k$-COLORING problem for $P_\ell$-free graphs are the following:

\begin{enumerate}

\item The $4$-COLORING problem for the class of $P_6$-free graphs. 

\item The 3-COLORING problem for the class of $P_\ell$-free graphs where $\ell\geq 7$. 

\end{enumerate}

Toward extending these polynomial results, it is convenient to consider the following more general coloring problem. A \textit{palette $L$} of a graph $G$ is a mapping which assigns each vertex $v\in V(G)$ a finite non-empty subset of $\mathbb{N}$, denoted by $L(v)$. A \textit{subpalette} of a palette $L$ of $G$ is a palette $L'$ of $G$ such that $L'(v)\subseteq L(v)$ for all $v\in V(G)$. We say a palette $L$ of the graph $G$ has \textit{order} $k$ if $L(v)\subseteq \{1,...,k\}$ for all $v\in V(G)$. Notationally, we write $(G,L)$ to represent a graph $G$ and a palette $L$ of $G$. We say that a $k$-coloring $c$ of $G$ is a \textit{coloring of $(G,L)$} provided $c(v)\in L(v)$ for all $v\in V(G)$. We say $(G,L)$ is \textit{colorable}, if there exists a coloring of $(G,L)$. 
We denote by $(G,\mathcal{L})$ a graph $G$ and a collection $\mathcal{L}$ of palettes of $G$. We say $(G,\mathcal{L})$ is \textit{colorable} if $(G,L)$ is colorable for some $L\in\mathcal{L}$, and $c$ is a {\em coloring} of $(G,\mathcal{L})$ if $c$ is a coloring of $(G,L)$ for some $L\in\mathcal{L}$.

Let $G$ be a graph. A subset $D$ of $V(G)$ is called a \textit{dominating set}, if every vertex in $V(G)\setminus D$ is adjacent to at least one vertex in $D$. Given $(G,L)$, consider a subset $X\subseteq V(G)$ such that $|L(x)|=1$ for all $x\in X$. For a subset $Y\subseteq V(G)\setminus X$, we say that we \textit{update the palettes of the vertices in $Y$ with respect to $X$}, if for all $y\in Y$ we set $$L(y)=L(y)\setminus (\bigcup_{u\in N(y)\cap X \text{ with } |L(u)|=1}L(u)).$$ Note that updating can be carried out in time $O(|V(G)|^2)$. 

By reducing to an instance of $2$-SAT, which Aspvall, Plass and Tarjan \cite{PSAT} showed can be solved in linear time, Edwards \cite{2SAT} proved the following:

\begin{lemma}\label{check} There is an algorithm with the following specifications:

\bigskip

{\bf Input:} A palette $L$ of a graph $G$ such that $|L(v)|\leq 2$ for all $v\in V(G)$.

\bigskip

{\bf Output:} A coloring of $(G,L)$, or a determination that none exists.

\bigskip

{\bf Running time:} $O(|V(G)|^2)$.

\end{lemma}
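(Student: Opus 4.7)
The plan is to reduce the list-coloring instance $(G,L)$ to an instance of 2-SAT and invoke the linear-time algorithm of Aspvall, Plass and Tarjan. First I would preprocess the palette: while some vertex $v$ has $|L(v)|=1$, say $L(v)=\{c\}$, update the palettes of its neighbors with respect to $\{v\}$ (that is, remove $c$ from the palette of each neighbor). If at any point some palette becomes empty, return that no coloring of $(G,L)$ exists. Otherwise, partition $V(G)$ into the set $X$ of vertices whose color has been forced to a single value and the set $Y=V(G)\setminus X$ of vertices still carrying two-element palettes, and note that any coloring of $(G,L)$ extends the forced assignment on $X$.

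For the reduction, I would introduce a boolean variable $x_v$ for each $v\in Y$, fixing the convention that if $L(v)=\{a_v,b_v\}$ then $x_v=\mathrm{true}$ encodes the color $a_v$ and $x_v=\mathrm{false}$ encodes $b_v$. For each edge $uv$ with $u,v\in Y$ and every pair $(\ell_u,\ell_v)$ of literals over $\{x_u,x_v\}$ whose encoded colors coincide, add the clause $\neg \ell_u \vee \neg \ell_v$; this introduces at most four 2-clauses per edge, one for each coincidence of colors between $L(u)$ and $L(v)$. A satisfying assignment of the resulting 2-CNF formula then yields a coloring of $(G,L)$, and conversely any coloring of $(G,L)$ restricts to a satisfying assignment, so the formula is satisfiable if and only if $(G,L)$ is colorable. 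Applying Lemma~\ref{check}'s underlying linear-time 2-SAT algorithm and reading off the assignment produces the desired coloring in the affirmative case.

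For the running time, the preprocessing step takes $O(|V(G)|^2)$ time in total by the bound on updating already stated in the excerpt. The reduction emits $O(|V(G)|)$ variables and at most $O(|V(G)|^2)$ clauses (one per edge, up to a constant), so writing down the 2-SAT instance takes $O(|V(G)|^2)$ time, and solving it takes linear time in this size, again $O(|V(G)|^2)$.

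The only genuinely non-routine point — and so the main thing to be careful about — is the correctness of the clause-encoding step: one must check that the conjunction of the clauses $\neg \ell_u\vee\neg\ell_v$, ranging over all colliding literal pairs on all edges, exactly captures the list-coloring constraint. This follows because a proper coloring is violated on an edge $uv$ precisely when the two endpoints are assigned the same color, and by construction each such coincidence corresponds to a unique clause in the formula that forbids the offending pair of literal values.
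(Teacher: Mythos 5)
Your proposal is correct and follows essentially the same route as the paper, which does not spell out a proof but attributes the result to Edwards' reduction of the list-coloring instance to 2-SAT, solved in linear time by the Aspvall--Plass--Tarjan algorithm. Your propagation of singleton lists, the literal-per-vertex encoding with clauses $\neg\ell_u\vee\neg\ell_v$ for colliding colors on edges, and the $O(|V(G)|^2)$ accounting are exactly the standard implementation of that cited argument.
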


\noindent Let $G$ be a graph. A subset $S$ of $V(G)$ is called \textit{monochromatic} with respect to a given coloring $c$ of $G$ if $c(u)=c(v)$ for all $u,v\in S$. For a palette $L$, and a set $X$ of  subsets of $V(G)$,
we say that $(G,L,X)$  is {\em colorable} if there is a coloring $c$ of 
$(G,L)$ such that $S$ is monochromatic with respect to $c$ for all
$S \in X$. The proof of \ref{check} is easily modified to obtain the following generalization \cite{2SAT+}:

\begin{lemma}\label{checkSubsets} There is an algorithm with the following specifications:

\bigskip

{\bf Input:} A palette $L$ of a graph $G$ such that $|L(v)|\leq 2$ for all $v\in V(G)$, together with a set $X$ of subsets of $V(G)$.

\bigskip

{\bf Output:} A coloring of $(G,L,X)$, or a determination that none exists.

\bigskip

{\bf Running time:} $O(|X||V(G)|^2)$.

\end{lemma}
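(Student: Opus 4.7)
The plan is to extend Edwards' 2-SAT encoding from the proof of Lemma \ref{check}. Recall that for each vertex $v$ with $|L(v)|=2$, say $L(v)=\{a_v,b_v\}$, the reduction introduces a Boolean variable $x_v$ whose two values correspond to the two colors in $L(v)$; vertices with $|L(v)|=1$ contribute a unit clause, and an empty palette forces immediate infeasibility. Each edge $uv$ of $G$ then yields at most four binary clauses on $\{x_u,x_v\}$, forbidding assignments that would give $u$ and $v$ the same color. This produces a 2-SAT instance with $O(|V(G)|)$ variables and $O(|V(G)|^2)$ clauses, which by \cite{PSAT} can be solved in time $O(|V(G)|^2)$.

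To accommodate the subsets, I would add further clauses forcing each $S\in X$ to be monochromatic. For each $S\in X$, pick an arbitrary vertex $v_S\in S$ and, for each other $u\in S$, add 2-SAT clauses enforcing $c(u)=c(v_S)$. Since both palettes have size at most two, this can be done with $O(1)$ clauses on $\{x_u,x_{v_S}\}$: if $L(u)\cap L(v_S)=\emptyset$ declare $(G,L,X)$ non-colorable and halt; if $|L(u)\cap L(v_S)|=1$, add unit clauses fixing both vertices to the common color; and if $L(u)=L(v_S)=\{a,b\}$, add two binary clauses encoding $x_u\leftrightarrow x_{v_S}$, with polarity determined by the chosen labelling of the two colors. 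Since forcing $S$ to be monochromatic is equivalent to forcing $c(u)=c(v_S)$ for each $u\in S\setminus\{v_S\}$, this contributes at most $\sum_{S\in X}(|S|-1)\le |X|\,|V(G)|$ new clauses in total.

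Finally, I would hand the combined instance to the linear-time 2-SAT algorithm of Aspvall, Plass, and Tarjan \cite{PSAT}; a satisfying assignment translates back to a coloring of $(G,L,X)$ in the obvious way, and unsatisfiability means no such coloring exists. The overall 2-SAT instance has size $O(|V(G)|^2+|X|\,|V(G)|)=O(|X|\,|V(G)|^2)$, which matches the claimed running time. I do not expect a real obstacle: the only item needing care is the small case analysis translating each equality constraint $c(u)=c(v_S)$ into 2-SAT clauses on the chosen Boolean variables, which is precisely why the authors describe the proof as an easy modification of that of \ref{check}.
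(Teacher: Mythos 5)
Your proof is correct and follows exactly the route the paper intends: the paper gives no explicit proof of \ref{checkSubsets}, citing it as an easy modification of the 2-SAT reduction behind \ref{check}, and your addition of equality clauses tying each vertex of $S$ to a representative $v_S$ (with the three-way case analysis on $L(u)\cap L(v_S)$) is precisely that modification, with the clause count $O(|V(G)|^2+|X||V(G)|)$ giving the claimed bound. No gaps.
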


\noindent Applying \ref{check} yields the following general approach for $3$-coloring a graph. Let $G$ be a graph, and suppose $D\subseteq V(G)$ is a dominating set. Initialize the order 3 palette $L$ of $G$ by setting $L(v)=\{1,2,3\}$ for all $v\in V(G)$. Consider a fixed $3$-coloring $c$ of $G[D]$, and let $L_c$ be the subpalette of $L$ obtained by updating the palettes of the vertices in $V(G)\setminus D$ with respect to $D$. By construction, $(G,L_c)$ is colorable if and only if the coloring $c$ of $G[D]$ can be extended to a $3$-coloring of $G$.  Since $|L_c(v)|\leq 2$ for all $v\in V(G)$,  \ref{check} allows us to efficiently test
if $(G,L_c)$ is colorable. Let $\mathcal{L}$ to be the set of all such palettes $L_c$ where $c$ is a $3$-coloring of $G[D]$. 
It follows that $G$ is 3-colorable if and only if $(G,\mathcal{L})$ is colorable. Assuming we can efficiently produce a dominating set $D$ of bounded size, since there are at most $3^{|D|}$ ways to 3-color $G[D]$, it follows that we can efficiently test if  $(G,\mathcal{L})$ is colorable, and so we can decide if $G$ is 3-colorable in polynomial time. This method figures prominently in the polynomial time algorithms for the 3-COLORING problem for the class of $P_\ell$-free graphs where $\ell\leq 5$. However, this approach needs to be modified when considering the class of $P_\ell$-free graphs when $\ell\geq 6$, since a dominating set of bounded size may not exist. Very roughly, the techniques used in this paper may be described as such a modification.  

In this paper and \cite{PART2}, we prove that the 3-COLORING problem can be solved in polynomial time for the class of $P_7$-free graphs. Here we consider the triangle-free case and prove the following:

\begin{lemma}\label{half1}There is an algorithm with the following specifications:

\bigskip

{\bf Input:} A $\{P_7,C_3\}$-free graph $G$.

\bigskip

{\bf Output:} A $3$-coloring of $G$, or a determination that none exists.

\bigskip

{\bf Running time:} $O(|V(G)|^{18})$.

\end{lemma}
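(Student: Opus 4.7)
The plan is to transform the $3$-coloring problem on the given $\{P_7, C_3\}$-free graph $G$ into a polynomial-size collection $\mathcal{L}$ of order-$3$ palettes of $G$, each satisfying $|L(v)| \leq 2$ for all $v \in V(G)$, such that $G$ is $3$-colorable if and only if $(G, \mathcal{L})$ is colorable. Given such a family we invoke \ref{check} once per palette; the target bound of $O(|V(G)|^{18})$ allows for $|\mathcal{L}| = O(|V(G)|^{16})$ together with the $O(|V(G)|^{2})$ cost of each $2$-SAT call.

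The engine of the reduction is the elementary observation that in a triangle-free graph, fixing the color of a vertex $v$ forbids that color at every vertex of $N(v)$. Consequently, if $D \subseteq V(G)$ dominates $G$ and $c$ is any $3$-coloring of $G[D]$, then updating the palettes of $V(G) \setminus D$ with respect to $D$ produces a palette $L_c$ with $|L_c(u)| \leq 2$ for every $u \notin D$, and of course $|L_c(d)| = 1$ for each $d \in D$. So the core task is to exhibit a polynomial family of candidate \emph{scaffolds} of bounded size, one of which dominates $G$ and one of whose $3$-colorings agrees with the desired $3$-coloring of $G$, should one exist.

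I would split the argument on whether $G$ contains a shell. When $G$ contains no shell, the combined restrictions of triangle-freeness, $P_7$-freeness, and the absence of a shell ought to force enough structural rigidity to guarantee a dominating set of bounded constant size; enumerating such sets together with their constantly many $3$-colorings yields the desired family directly. When $G$ does contain a shell $(C, p)$, I would enumerate all $O(|V(G)|^{7})$ shells and all $O(1)$ $3$-colorings of the seven shell vertices, and then classify every remaining vertex $u$ by its neighborhood in the shell. Triangle-freeness forbids $u$ from being adjacent to two consecutive vertices of $C$ or to both $p$ and one of $\{v_{\ell}, v_{\ell+3}\}$, and $P_7$-freeness sharply restricts how the resulting attachment types can interact across $G$. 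Within each type class I would identify a bounded number of additional seed vertices whose guessed colors, combined with the coloring of the shell, suffice to bring every remaining palette down to size $\leq 2$; the total number of additional seed tuples should be $O(|V(G)|^{9})$, giving the overall factor of $O(|V(G)|^{16})$.

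The principal obstacle is the structural analysis around a shell: proving that a bounded number of additional seeds, enumerated exhaustively, always suffice to dominate $G$ color-wise relative to the fixed coloring of the shell. This amounts to a decomposition-style statement for $\{P_7, C_3\}$-free graphs relative to a shell, and is where the bulk of the case analysis on attachment types will live. Once that statement is in place, correctness follows because for any $3$-coloring $c$ of $G$ some guessed scaffold coincides with the restriction of $c$, and efficiency follows because the total number of guesses is polynomial and each call to \ref{check} is quadratic.
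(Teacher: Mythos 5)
There is a genuine gap, and it sits exactly where you acknowledge the ``principal obstacle'': both branches of your plan rest on unproved (and in one case false) structural claims. In the shell-free branch you assert that $\{P_7,C_3\}$-freeness together with the absence of a shell ``ought to force'' a dominating set of bounded constant size. This is false: take a vertex $v$ adjacent to $a_1,\dots,a_n$ and give each $a_i$ a pendant neighbor $b_i$. This graph is $\{P_7,C_3\}$-free and contains no $6$-gon (hence no shell), yet any dominating set must meet each pendant edge $\{a_i,b_i\}$, so its size is at least $n$. The paper is explicit that the bounded-dominating-set paradigm, which works for $P_\ell$-free graphs with $\ell\leq 5$, breaks down already at $\ell=6$; its shell-free analysis instead proceeds by a further case split ($7$-gon, then shell, then $5$-gon, then observing the remaining graphs are bipartite), and none of those cases is resolved by a bounded dominating set either.

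The same problem recurs in your shell branch: after fixing a shell and its coloring, the vertices anticomplete to the shell (and, beyond them, their neighbors) retain full lists, and no argument is given that a bounded number of extra ``seeds'' — enumerated in $O(|V(G)|^{9})$ tuples — can always collapse all lists to size at most $2$; this is precisely the statement that would need a proof, and the paper's solution is not of that form at all. What the paper actually does is: (i) preprocess by deleting dominated vertices and contracting non-trivial homogeneous pairs of stable sets (\ref{cleaning}); (ii) prove via \ref{3edge} that, relative to a $7$-gon, shell, or $5$-gon, the ``second neighborhood'' splits into components of size at most two; and (iii) apply the palette-reduction lemmas \ref{lemma1} and \ref{lemma2}, which produce polynomially many subpalettes/restrictions arising from a global analysis (type I/II/III colorings, edge heights in an auxiliary bipartite $\overline{C_4}$-free graph) rather than from guessing the colors of finitely many additional vertices, and which introduce monochromatic-set constraints requiring the strengthened $2$-SAT subroutine \ref{checkSubsets} rather than \ref{check}. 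Also note that the shell case in the paper presupposes $C_7$-freeness (the $7$-gon case is dispatched first), another ordering your split on ``shell versus no shell'' does not respect. As written, your proposal reduces to the naive seed-guessing scheme whose failure is the reason the paper's machinery exists, so the argument does not go through.
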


\bigskip

\noindent Here is a brief outline of the algorithm. Consider a $\{P_7,C_3\}$-free graph $G$. We begin by establishing two polynomial time procedures \ref{lemma1} and \ref{lemma2} which determine if a $3$-coloring of a specific induced subgraph of $G$ extends to a coloring of $G$, and gives an explicit $3$-coloring if one exists. More specifically, given an order 3 palette $L$ of $G$, and a set $X$ of subsets of $V(G)$, \ref{lemma1} and \ref{lemma2} allow us to reduce determining if $(G,L,X)$ is colorable to determining if one of polynomially many triples $(G',L',X')$ is colorable, where each of $(G,L',X)$ is ``closer'' than $(G,L,X)$ to being of the form required by~\ref{checkSubsets}. Next, we introduce a polynomial time ``cleaning" procedure \ref{cleaning}, which preprocesses the graph $G$ so that we can apply \ref{lemma1} and \ref{lemma2}. Next, we use \ref{lemma2} to show that if $G$ contains a 7-gon, then in polynomial time we can either produce a $3$-coloring of $G$, or determine that none exists. And so, we may assume $G$ is a $\{P_7,C_3,C_7\}$-free graph. Next, we use \ref{lemma2} to show that if $G$ contains a shell, then in polynomial time we can either produce a $3$-coloring of $G$, or determine that none exists. And so, we may assume $G$ is a $\{P_7,C_3,C_7,shell\}$-free graph. Finally, we use \ref{lemma1} to show that if $G$ contains a 5-gon, then in polynomial time we can either produce a $3$-coloring of $G$, or determine that none exists. And so, we may assume $G$ is a $\{P_7,C_3,C_5,C_7\}$-free graph. Since $G$ is $P_7$-free, it follows that $G$ is $C_k$-free for all $k>7$. And so, $G$ is bipartite, and we can easily produce a 2-coloring of $G$, thus, establishing \ref{half1}. 

\bigskip

\noindent In \cite{PART2}, using different techniques, we prove the following:

\begin{lemma}\label{half2} There is a polynomial time algorithm with the following specifications:

\bigskip

{\bf Input:} A $P_7$-free graph $G$ which contains a triangle.

\bigskip

{\bf Output:} A $3$-coloring of $G$, or a determination that none exists.

\end{lemma}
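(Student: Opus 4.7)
The plan is to exploit the triangle to seed a small set of forced color constraints, and then use the $P_7$-free structure to propagate these constraints far enough to apply the 2-SAT procedure of \ref{check}. Fix a triangle $T = \{t_1, t_2, t_3\}$ in $G$. Since the three vertices of a triangle must receive distinct colors in any $3$-coloring, I would enumerate the $6$ possible assignments of $\{1,2,3\}$ to $T$. For each assignment, initialize the order-$3$ palette $L$ with $L(t_i)$ set to the chosen color and $L(v) = \{1,2,3\}$ otherwise, and update the palettes of the vertices in $N(T)$; this immediately reduces their palettes to size at most $2$.

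The main task is then to reduce the palettes of vertices at distance $\geq 2$ from $T$ to size at most $2$, since only at that point does \ref{check} apply directly. Because $G$ is $P_7$-free, vertices cannot be too far from $T$: any shortest path of length $k$ from a vertex $v$ to $T$ can be extended by edges of $T$, and careful analysis of such extensions should force a bound on the distance from $T$ to any vertex, as well as strong restrictions on how the successive neighborhoods $N_i(T)$ attach to $T$ and to each other. The plan is to use this structure to identify, in polynomial time, a moderately sized ``seed'' set $S \supseteq T$ whose colorings can be enumerated and which, once colored, forces every remaining palette to size at most $2$.

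The expected hard part is that the triangle $T$ alone is too small a seed to force most palettes; one must enrich it. A natural approach is to guess, together with $T$, a carefully chosen auxiliary induced substructure (for example, a larger clique, a specific wheel, or an extremal subgraph built around $T$) whose presence can be detected in polynomial time and whose colorings, combined with those of $T$, propagate to force the rest of $G$. This mirrors the role played by shells and $7$-gons as constraint ``amplifiers'' in the present paper, but crucially the presence of triangles opens up different forcing mechanisms—such as using a triangle together with a common neighbor or non-neighbor to identify colors—that are unavailable in the triangle-free setting, justifying the comment in the excerpt that \emph{different techniques} are used.

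Carrying out this plan is expected to require a substantial case analysis over the possible local configurations of $T$ and its neighborhood levels, and the principal difficulty is ensuring that the collection of seed sets to enumerate remains of polynomial cardinality while covering every $P_7$-free graph containing a triangle. As in the present paper, the final step after all palettes have size at most $2$ will be an invocation of \ref{check} or \ref{checkSubsets}, and the overall running time is expected to be polynomial in $|V(G)|$ with an exponent determined by the seed size in the worst case.
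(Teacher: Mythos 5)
There is a genuine gap: what you have written is a strategy outline, not a proof. The entire mathematical content of \ref{half2} would lie in (i) exhibiting a concrete, polynomially enumerable family of ``seed'' sets built around the triangle, (ii) proving a structural lemma showing that in a $P_7$-free graph every such seed, once colored, reduces all remaining palettes to size at most $2$ (or otherwise reduces to instances handled by \ref{check}/\ref{checkSubsets}), and (iii) verifying that the enumeration covers all $3$-colorings. Your proposal explicitly defers all three: the seed set is ``a carefully chosen auxiliary induced substructure'' left unspecified, the propagation is what ``should force'' the palettes down, and the covering argument is acknowledged as ``the expected hard part.'' Nothing in the text rules out the obvious failure mode, which the introduction of this very paper points out: for $P_\ell$-free graphs with $\ell \geq 6$ a bounded-size dominating (or forcing) set need not exist, so the naive ``enumerate colorings of a small seed and update'' scheme is exactly what must be replaced by a substantive structural argument. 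The distance bound you invoke (shortest paths are induced, hence have at most $6$ vertices) is correct but does essentially nothing toward controlling the palettes of vertices at distance $\geq 2$ from the triangle, which is where all the difficulty sits.

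For context, note that this paper does not prove \ref{half2} at all: the result is stated here and proved in the companion paper \cite{PART2}, explicitly ``using different techniques'' from the list-reduction machinery (\ref{lemma1}, \ref{lemma2}, cleaning, and the $7$-gon/shell/$5$-gon analysis) developed here for the triangle-free case. So there is no in-paper argument to compare against; but as it stands your proposal cannot be credited as a proof of the statement, since every step beyond the trivial enumeration of the $6$ colorings of the triangle and the initial palette update is conjectural.
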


\noindent Together, \ref{half1} and \ref{half2}, imply the following:

\begin{lemma}\label{both} There is a polynomial time algorithm with the following specifications:

\bigskip

{\bf Input:} A $P_7$-free graph $G$.

\bigskip

{\bf Output:} A $3$-coloring of $G$, or a determination that none exists.

\end{lemma}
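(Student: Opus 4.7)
The plan is a straightforward dispatch combining \ref{half1} and \ref{half2}. Given a $P_7$-free input graph $G$, either $G$ contains a triangle or it does not, so exactly one of the two hypotheses applies; the task is just to decide which, and then to hand off to the corresponding algorithm.

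First, I would test whether $G$ contains a triangle. A naive check iterates over all unordered triples of vertices and asks whether they induce a $C_3$; this runs in $O(|V(G)|^3)$ time, which is negligible compared to the $O(|V(G)|^{18})$ bound from \ref{half1}. If no triangle is found, then $G$ is $\{P_7,C_3\}$-free, and I invoke \ref{half1} to obtain a $3$-coloring of $G$ or a certificate that none exists. If a triangle is found, then $G$ is a $P_7$-free graph containing a triangle, so I invoke \ref{half2} instead. Either way, the output of the subroutine is returned as the output of the algorithm.

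Correctness is immediate from \ref{half1} and \ref{half2}, since the two lemmas jointly cover the two exhaustive and mutually exclusive cases for a $P_7$-free graph, and each returns either a valid $3$-coloring or a determination that none exists. The total running time is the sum of the $O(|V(G)|^3)$ triangle-detection step and one invocation of \ref{half1} or \ref{half2}, both of which are polynomial, so the combined algorithm is polynomial. There is really no obstacle in this reduction: all the substantive work has already been done in the two halves, and this lemma is essentially the trivial combination of them.
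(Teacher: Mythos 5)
Your proof is correct and is exactly the argument the paper intends: the paper simply notes that \ref{half1} and \ref{half2} together imply \ref{both}, and your explicit dispatch (polynomial-time triangle detection, then invoking the appropriate half) fills in the same trivial combination.
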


\noindent This paper is organized as follows. In section 2 we prove \ref{lemma1}
and in section 3 we prove \ref{lemma2}.
In section 4, we give a preprocessing procedure \ref{cleaning}, so that we can apply \ref{lemma1} and \ref{lemma2} to a given $\{P_7,C_3\}$-free graph. In section 5 we prove a lemma that allows us to identify more easily situations where 
\ref{lemma1} and \ref{lemma2} are applicable.
In section 6, we prove \ref{7gon}, which shows that if a $\{P_7,C_3\}$-free graph contains a 7-gon, then 3-COLORING can be solved in polynomial time. In section 7, we prove \ref{shell}, which shows that if a $\{P_7,C_3,C_7\}$-free graph contains a shell, then  3-COLORING can be solved in polynomial time. In section 8, we prove \ref{5gon}, which shows that if a $\{P_7,C_3,C_7,shell\}$-free graph contains a 5-gon, then  $3$-COLORING can be solved in polynomial time. Finally, in section 9, we tie everything together and give a formal proof of \ref{half1}.

\section{Reducing the Palettes: Part I}

In this section, we give a polynomial time procedure \ref{lemma1} which, 
given a $\{P_7,C_3\}$-free graph $G$ with  palette $L$, and a set of subsets $X$ of $V(G)$, under certain circumstances, allows us  to  reduce determining if 
$(G,L,X)$ is colorable to determining if one
of polynomially many triples $(G,L',X)$ is colorable, where
each $(G,L',X)$ is ``closer'' than $(G,L,X)$ to the form required by
\ref{checkSubsets}. More precisely, more vertices have lists of size at most 
two in the palette $L'$ than in the palette $L$. We begin with some 
definitions,  and easy lemma and algorithm.

Let $G$ be a graph. A \textit{clique} in $G$ is a set of vertices all pairwise adjacent. A \textit{stable set} in $G$ is a set of vertices all pairwise non-adjacent. The \textit{neighborhood} of a vertex $v\in V(G)$ is the set of all vertices adjacent to $v$, and is denoted $N(v)$. The \textit{degree} of a vertex $v\in V(G)$ is $|N(v)|$, and is denoted $deg(v)$. A \textit{partition} of a set $S$ is a collection of disjoint subsets of $S$ whose union is $S$. Let $A$ and $B$ be disjoint subsets of $V(G)$. For a vertex $b\in V(G)\setminus A$, we say that \textit{$b$ is complete to $A$} if $b$ is adjacent to every vertex of $A$, and that \textit{$b$ is anticomplete to $A$} if $b$ is non-adjacent to every vertex of $A$. If every vertex of $A$ is complete to $B$, we say \textit{$A$ is complete to $B$}, and if every vertex of $A$ is anticomplete to $B$, we say that \textit{$A$ is anticomplete to $B$}. If $b\in V(G)\setminus A$ is neither complete nor anticomplete to $A$, we say that \textit{$b$ is mixed on $A$}. We say $G$ is \textit{connected} if $V(G)$ cannot be partitioned into two disjoint non-empty sets anticomplete to each other. The \textit{complement} $\overline{G}$ of $G$ is the graph with vertex set $V(G)$ such that two vertices are adjacent in $\overline{G}$ if and only if they are non-adjacent in $G$. If $\overline{G}$ is connected we say that $G$ is \textit{anticonnected}. For $X \subseteq V(G)$, we say that
$X$ is {\em connected} if $G[X]$ is connected, and that $X$ is anticonnected
if $G[X]$ is anticonnected. A \textit{component} of $X \subseteq V(G)$ is a maximal connected subset of $X$, and an \textit{anticomponent} of $X$ is a maximal anticonnected subset of $X$.

\begin{lemma}\label{half'} Let $G$ be a bipartite $\overline{C_4}$-free graph
with bipartition $(A,B)$. If $a,a'\in A$ are such that $deg(a)\leq deg(a')$, 
then $N(a)\subseteq N(a')$.

\end{lemma}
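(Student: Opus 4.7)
The plan is to proceed by contradiction. Suppose $N(a) \not\subseteq N(a')$, so there exists some vertex $b \in N(a) \setminus N(a')$. The first step is to use the degree hypothesis to produce a symmetric witness $b' \in N(a') \setminus N(a)$. Writing $k = |N(a) \cap N(a')|$ and $m = |N(a) \setminus N(a')| \geq 1$, we have $|N(a)| = k + m$, so $|N(a')| \geq k + m$ by assumption, which forces $|N(a') \setminus N(a)| \geq m \geq 1$. Pick any such $b'$.

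The second step is to exhibit a forbidden induced subgraph on $\{a, a', b, b'\}$. Since $A$ is a side of the bipartition and $a, a' \in A$, the pair $aa'$ is a non-edge; since $b \in N(a) \subseteq B$ and $b' \in N(a') \subseteq B$, the pair $bb'$ is also a non-edge. By construction $ab$ and $a'b'$ are edges, while $ab'$ and $a'b$ are non-edges. Hence $G[\{a, a', b, b'\}]$ consists of exactly the two edges $ab$ and $a'b'$, i.e.\ an induced $2K_2$, which is precisely $\overline{C_4}$. This contradicts the hypothesis that $G$ is $\overline{C_4}$-free, completing the proof.

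There is no real obstacle here: once the degree-counting argument produces both $b$ and $b'$, bipartiteness immediately rules out the two edges $aa'$ and $bb'$, so the induced $\overline{C_4}$ drops out for free. The only subtlety worth flagging is that the degree inequality $\deg(a) \leq \deg(a')$ is used not just to bound sizes but to guarantee the existence of $b'$ whenever $b$ exists; without it one could have a strict inclusion failure in the other direction.
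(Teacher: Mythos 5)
Your proof is correct and follows essentially the same argument as the paper: contradiction via a witness $b\in N(a)\setminus N(a')$, the degree inequality forcing a symmetric witness $b'\in N(a')\setminus N(a)$, and the resulting induced $\overline{C_4}$ on $\{a,b,a',b'\}$. You simply spell out the counting and the bipartiteness check in more detail than the paper does.
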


\begin{proof}

Suppose not, and so there exists $b\in N(a)\setminus N(a')$. Since $|N(a)|\leq|N(a')|$, it follows that there exists $b'\in N(a')\setminus N(a)$. However, then $\{a,b,a',b'\}$ is a $\overline{C_4}$ in $G$, a contradiction. This proves \ref{half'}.

\end{proof}

\begin{lemma}\label{half} There is an algorithm with the following specifications:

\bigskip

{\bf Input:} A bipartite $\overline{C_4}$-free graph $G$ together with a bipartition $V(G)=A\cup B$.

\bigskip

{\bf Output:} A partition  $A_1\cup ...\cup A_q$ of $A$ and an ordering $\{b_1,...,b_{|B|}\}$ of the vertices in $B$ such that for every $i\in \{1,...,q\}$ and $j\in \{1,...,|B|\}$ the following hold:

\bigskip

\begin{enumerate}
\item If $a,a'\in A_i$, then $N(a)=N(a')$, and 
\item If $b_j$ is complete to $A_i$, then $A_i\cup ... \cup A_q$ is complete 
to $\{b_j,...,b_{|B|}\}$.
\end{enumerate}

\bigskip

{\bf Running time:} $O(|V(G)|^2)$.

\end{lemma}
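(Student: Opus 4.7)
The key observation is that Lemma \ref{half'} applies to \emph{both} sides of the bipartition: applied directly to $(A,B)$, it says that the neighborhoods of vertices of $A$ form a chain under inclusion ordered by degree; applied to the bipartition $(B,A)$ of the same graph, the same holds for the vertices of $B$. So the plan is simply to sort each side of the bipartition by degree.

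In detail, I first compute $\deg(v)$ for every $v\in V(G)$ in $O(|V(G)|^2)$ time. I then sort the vertices of $A$ in non-decreasing order of degree and group those sharing a common degree into classes $A_1,\ldots,A_q$; similarly, I sort the vertices of $B$ in non-decreasing order of degree as $b_1,\ldots,b_{|B|}$. Both sorts can be done in $O(|V(G)|)$ time using bucket sort, since degrees are integers in $\{0,\ldots,|V(G)|\}$, so the overall running time is $O(|V(G)|^2)$.

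To verify condition (1), note that if $a,a'\in A_i$ then $\deg(a)=\deg(a')$, and two applications of Lemma \ref{half'} (one in each direction) give $N(a)=N(a')$. To verify condition (2), assume $b_j$ is complete to $A_i$ and fix arbitrary $k\geq i$ and $l\geq j$. Choose $a\in A_i$ and $a'\in A_k$. Since $\deg(b_j)\leq \deg(b_l)$, Lemma \ref{half'} applied to the bipartition $(B,A)$ gives $N(b_j)\subseteq N(b_l)$, and from $a\in A_i\subseteq N(b_j)$ we obtain $b_l\in N(a)$. Since $\deg(a)\leq \deg(a')$, Lemma \ref{half'} applied to $(A,B)$ gives $N(a)\subseteq N(a')$, so $b_l\in N(a')$. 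As $a'\in A_k$ was arbitrary, $b_l$ is complete to $A_k$, proving (2).

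The only real subtlety is recognizing that the degree orderings on $A$ and on $B$ must interact correctly with each other; once one notices that Lemma \ref{half'} yields comparability of neighborhoods on both sides simultaneously, the argument reduces to two invocations of that lemma, and there is no serious obstacle to worry about.
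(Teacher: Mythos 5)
Your proposal is correct and follows essentially the same route as the paper: sort both sides by degree, group $A$ into equal-degree classes, and verify (1) and (2) by applying Lemma \ref{half'} to both bipartitions $(A,B)$ and $(B,A)$. The only cosmetic difference is that you index the classes consecutively (so all are nonempty) and argue (2) elementwise, while the paper indexes $A_i$ by degree value; the substance is identical.
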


\begin{proof}

In time $O(|V(G)|^2)$ we can compute the degree of each vertex in $G$, and sort the vertices of $A$ and $B$ by degree, thus obtaining a labeling $a_1,...,a_{|A|}$ of $A$ such that $deg(a_1)\leq ...\leq deg(a_{|A|})$, and a labeling $b_1,...,b_{|B|}$ of $B$ such that $deg(b_1)\leq ...\leq deg(b_{|B|})$. Now, let $q=deg(a_{|A|})$, and for each $i\in \{1,...,q\}$ define $A_i=\{a\in A: deg(a)=i\}$. By applying \ref{half'} twice, it follows that if $a,a'\in A_i$, then $N(a)=N(a')$. Next, suppose $b_j$ is complete to $A_i$ for some $i\in \{1,...,q\}$ and $j\in \{1,...,|B|\}$, which implies $A_i\subseteq N(b_j)$ and $b_j\in N(a)$ for all $a\in A_i$. Since $deg(b_j)\leq...\leq deg(b_{|B|})$, by \ref{half'}, it follows that $A_i$ is complete to $\{b_j,...,b_{|B|}\}$. And, since $deg(a)\geq i$ for all $a\in A_{i}\cup...\cup A_q$, by \ref{half'}, it follows that $\{b_j,...,b_{|B|}\}$ is complete to $A_i\cup ...\cup A_q$. This proves \ref{half}.

\end{proof}

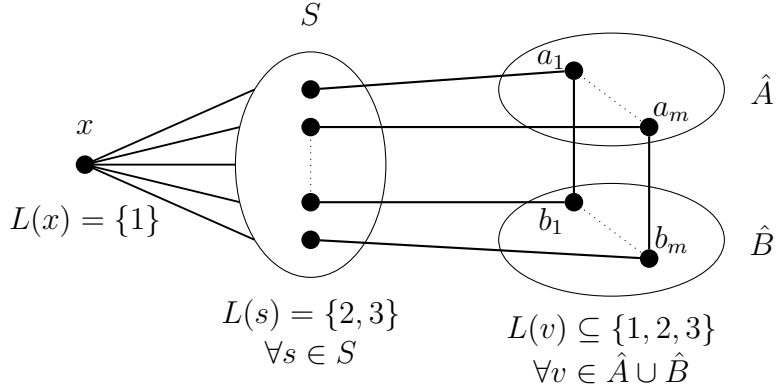
\begin{figure}[here]
\begin{center}

 \begin{tikzpicture}[scale=1]
 \GraphInit[vstyle=Simple]

 \tikzset{VertexStyle/.style = {shape = circle,fill = black,minimum size = 7pt,inner sep=0pt}}
 
 \Vertex[x=1,y=0]{1}

\draw [black, line width=.025cm](1,0)--(3.254,1);
\draw [black, line width=.025cm](1,0)--(3.06,.5);
\draw [black, line width=.025cm](1,0)--(3.254,-1);
\draw [black, line width=.025cm](1,0)--(3.06,-.5);
\draw [black, line width=.025cm](1,0)--(3,0);

\draw (4,0) ellipse (1cm and 1.5cm);

\draw (8,-1) ellipse (1.5cm and .75cm);
\draw (8,1) ellipse (1.5cm and .75cm);




 \Vertex[x=7.5,y=1.25]{2}
 \Vertex[x=7.5,y=-.5]{3}
 \Vertex[x=8.5,y=-1.25]{5}
 \Vertex[x=8.5,y=.5]{4}

\node at (7.2,1.4){$a_1$};
\node at (7.2,-.75){$b_1$};

\node at (8.8,.7){$a_m$};
\node at (8.8,-1){$b_m$};

\draw[dotted] (7.5,1.25)--(8.5,.5);
\draw[dotted] (7.5,-.5)--(8.5,-1.25);
\draw[dotted] (4,.5)--(4,-.5);

 \Vertex[x=4,y=1]{6}
 \Vertex[x=4,y=-.5]{7}
 \Vertex[x=4,y=-1]{9}
 \Vertex[x=4,y=.5]{8}

\Edges(6,2)
\Edges(7,3)

\Edges(8,4)
\Edges(9,5)


\Edges(2,3)
\Edges(4,5)

\node at (1,-.75) {$L(x)=\{1\}$};

\node at (1,.5) {$x$};

\node at (4,2) {$S$};

\node at (4,-2) {$L(s)=\{2,3\}$};
\node at (4,-2.5){$\forall s\in S$};

\node at (10,1) {$\hat{A}$};
\node at (10,-1) {$\hat{B}$};

\node at (8,-2.2) {$L(v)\subseteq \{1,2,3\}$};
\node at (8,-2.7){$\forall v\in \hat{A}\cup\hat{B}$};
 
\end{tikzpicture}

\caption{By \ref{lemma1}, when we encounter the above situation we can reduce determining if $(G,L,X)$ is colorable to determining if one of the triples 
$(G,L',X)$ is colorable for some $L'\in \mathcal{L}$, where each of $(G,L',X)$ is ``closer'' to the form required by~\ref{checkSubsets} (in particular, $L'(v) \leq 2$ for all $v \in \hat{A} \cup \hat{B}$).}

\end{center}

\end{figure}

\noindent The following is the main result of the section.

\begin{lemma}\label{lemma1}
 
Let $G$ be a $\{P_7,C_3\}$-free graph with $V(G)=\{x\}\cup S\cup \hat{A}\cup \hat{B}\cup Y$, where

\begin{itemize}
\item $x$ is complete to $S$ and anticomplete to $\hat{A}\cup \hat{B}\cup Y$,
\item $\hat{A}=\{a_1,...,a_t\}$ and $\hat{B}=\{b_1,...,b_t\}$ are stable,
\item for $i,j\in \{1,...,t\}$, $a_i$ is adjacent to $b_j$ if and only if $i=j$, and
\item each vertex of $\hat{A}\cup \hat{B}$ has a neighbor in $S$.

\end{itemize}

\noindent Let $L$ be an order 3 palette of $G$ such that 
$L(v) \subseteq \{2,3\}$ for every $v\in S$.

\noindent Let $X$ be a set of subsets of $V(G)$.
\bigskip

\noindent Then there exists a set $\mathcal{L}$ of $O(|V(G)|^2)$ subpalettes of $L$ such that

\bigskip

\noindent (a) For each $L'\in\mathcal{L}$, $L'(v)=L(v)$ for every $v \in \{x\}\cup S\cup Y$, and $|L'(v)| \leq 2$ for every $v \in \hat{A}\cup \hat{B}$, and

\bigskip
 
\noindent (b) $(G,L,X)$ is colorable if and only if $(G,L',X)$ is colorable for
at least one $L' \in \mathcal{L}$; and for every $L' \in \mathcal{L}$, every coloring of $(G,L',X)$ is a coloring of $(G,L,X)$.

\bigskip

\noindent Moreover, if the partition $\{x\}\cup S\cup \hat{A}\cup \hat{B}\cup Y$ of $V(G)$ is given, then $\mathcal{L}$ can be computed in time $O(|V(G)|^4)$.

\end{lemma}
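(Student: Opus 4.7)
The plan is to use $\{P_7, C_3\}$-freeness together with the specific combinatorial setup to parameterize, by $O(|V(G)|^2)$ invariants, the possible palette restrictions on $\hat{A} \cup \hat{B}$ that come from valid colorings of $(G, L, X)$; each invariant will produce one member of $\mathcal{L}$. The easy preliminaries are: $S$ is stable, because $x$ is complete to $S$ and $G$ is triangle-free; every valid coloring therefore partitions $S$ into $S_2 \cup S_3$ (the vertices colored $2$ and $3$); and for each $v \in \hat{A} \cup \hat{B}$, since $v$ has an $S$-neighbor, the restricted palette $L'(v)$ coming from any valid coloring is determined by which of the colors in $\{2,3\}$ appear on $N(v) \cap S$.

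The technical heart of the argument is the following structural lemma. Form the auxiliary bipartite graph $H$ with parts $\{A_1, \ldots, A_t\}$ and $S$, where $A_i$ is a new vertex representing the matched pair $(a_i, b_i)$, and $A_i \sim_H s$ iff $a_i \sim s$ or $b_i \sim s$ in $G$. I claim $H$ is $\overline{C_4}$-free. For if $\{A_i, A_j, s, s'\}$ induces a $\overline{C_4}$ in $H$ with $H$-edges $A_i s$ and $A_j s'$, one observes that all hypotheses of Lemma \ref{lemma1} are invariant under swapping $a_i$ with $b_i$ (and independently $a_j$ with $b_j$), so WLOG $a_i \sim s$ and $a_j \sim s'$ in $G$. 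The non-$H$-edges then give $a_i, b_i \not\sim s'$ and $a_j, b_j \not\sim s$; triangle-freeness applied to the edges $a_i b_i$ and $a_j b_j$ forces $b_i \not\sim s$ and $b_j \not\sim s'$; and $x$ is complete to $S$ and anticomplete to $\hat{A} \cup \hat{B}$. Direct verification then shows that $b_i - a_i - s - x - s' - a_j - b_j$ is an induced $P_7$ in $G$, contradicting $P_7$-freeness.

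Having shown $H$ is bipartite and $\overline{C_4}$-free, I would apply \ref{half} to $H$ to obtain a partition $\{A_1, \ldots, A_t\} = \mathcal{A}_1 \cup \ldots \cup \mathcal{A}_{q'}$ and an ordering of $S$ for which the common $H$-neighborhoods satisfy $N_1 \subsetneq \ldots \subsetneq N_{q'}$. For any valid coloring, define $T_c = \min\{k : N_k \cap S_c \ne \emptyset\}$ for $c \in \{2,3\}$; since $N_1 \ne \emptyset$, $\min(T_2,T_3) = 1$, so there are $O(|V(G)|)$ possible pairs $(T_2,T_3)$. For $k < \max(T_2, T_3)$, the chain forces $N_k$ to be monochromatic in a known color, which together with $N(a_i) \cap N(b_i) \cap S = \emptyset$ (triangle-freeness) fixes the palettes $L'(a_i), L'(b_i)$ for every $A_i \in \mathcal{A}_k$ to a 2-element subset of $\{1,2,3\}$.

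The remaining difficulty, which I expect to be the main obstacle, is the case $k \geq \max(T_2, T_3)$, where $N_k$ is bichromatic and the split of $N_k$ into $N(a_i) \cap S$ and $N(b_i) \cap S$ for $A_i \in \mathcal{A}_k$ may still vary. I would resolve this by a second round of $P_7$-free analysis — analogous in spirit to the $\overline{C_4}$-freeness argument, but now tracking the split and again exploiting the $a_i b_i$ matching, the universal neighbor $x$, and the fact that triangle-freeness prevents $a_i, b_i$ from sharing an $S$-neighbor — to show this extra information is controlled by $O(|V(G)|)$ further parameters globally. Combining, the enumeration over $(T_2,T_3)$ and the split parameters yields a set $\mathcal{L}$ of $O(|V(G)|^2)$ subpalettes; each $L' \in \mathcal{L}$ is built in $O(|V(G)|^2)$ time by reading off forbidden colors vertex by vertex, giving the claimed $O(|V(G)|^4)$ total running time. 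Correctness is then immediate: every valid coloring of $(G, L, X)$ corresponds to some parameter tuple whose associated $L'$ contains it, and every coloring of $(G, L', X)$ is a coloring of $(G, L, X)$ because $L' \subseteq L$.
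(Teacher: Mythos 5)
Your first half matches the paper's proof: the auxiliary bipartite graph on the matched pairs and $S$ (adjacency when $s$ has a neighbor in $\{a_i,b_i\}$, which by triangle-freeness is the same as ``mixed on $\{a_i,b_i\}$''), the $\overline{C_4}$-freeness argument via the induced path $b_i-a_i-s-x-s'-a_j-b_j$, and the application of \ref{half} to get a nested-neighborhood chain are exactly the paper's claim (1) and (2), just with the roles of the two sides of the bipartition swapped. But the proof has a genuine gap precisely where you flag ``the main obstacle.'' Your parameters $(T_2,T_3)$ only force the palettes of pairs whose union-neighborhood $N_k$ lies below $\max(T_2,T_3)$ in the chain; for all pairs at or above that level, $N_k$ is bichromatic and the relevant information is how each individual pair splits $N_k$ into $N(a_i)\cap S$ and $N(b_i)\cap S$ versus how the coloring splits $N_k$ into the two color classes. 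A priori this is an exponential amount of global data, and your proposal to control it by ``a second round of $P_7$-free analysis'' with $O(|V(G)|)$ further parameters is stated as a hope, not proved. In particular, your scheme has no mechanism for the colorings in which color $1$ is used heavily on $\hat{A}\cup\hat{B}$: if, say, every ``large'' vertex is colored $1$, then no colors on $S$ are forced at all, the sets $S_2,S_3$ are essentially arbitrary, and no choice of $(T_2,T_3)$ together with an $O(|V(G)|)$-size split parameter pins down two-element palettes on $\hat{A}\cup\hat{B}$ that are guaranteed to contain such a coloring.

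This missing part is where the paper does its real work: it defines heights, small/large vertices $s_i,l_i$ and the sets $N_i,M_i$, and then distinguishes type I colorings (some pair colored $\{2,3\}$, which forces all of $S_\ell\cup\dots\cup S_q$ and yields the family $\mathcal{L}_1$), type II colorings (two pairs of equal height with a specific $1$-pattern, family $\mathcal{L}_2$ of size $O(|V(G)|^2)$ --- this is where the quadratic bound comes from), type III colorings (family $\mathcal{L}_3$, whose correctness proof explicitly uses the nonexistence of type I and II colorings, i.e.\ is conditional on the other cases), and the single extra palette $\hat{L}$ fixing every $l_i$ to $\{1\}$. The final step is a minimality argument (choose a pair of minimum height with $c(l_i)\in\{2,3\}$ and $N_i$ maximal) showing that every coloring not covered by $\mathcal{L}_1\cup\mathcal{L}_2\cup\{\hat{L}\}$ is of type III. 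None of this case structure, nor any substitute for it, appears in your proposal, so as written the proposal establishes only the easy containment direction of (b) and the bookkeeping, not the covering statement that makes the lemma true.
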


\begin{proof}

Since $G$ is triangle-free, it follows that $S$ is stable, and that every vertex of $S$ is either anticomplete to or mixed on $\{a_i,b_i\}$ for every $i\in\{1,...,t\}$. Let $H$ be a bipartite graph with bipartition $V(H)=S\cup \{c_1,...,c_t\}$, where $s \in S$ is adjacent to $c_i$ in $H$ if and only if $s$ is mixed on $\{a_i,b_i\}$ in $G$. Note, $H$ can be constructed in time $O(|V(G)|^2)$.

\bigskip

\noindent \textit{(1) $H$ is a $\overline{C_4}$-free graph.}

\bigskip

\noindent Proof: Suppose not. Then there exist $s,s'\in S$ such that in $G$ for $i\ne j$, $s$ is mixed on $\{a_i,b_i\}$ and anticomplete to $\{a_j,b_j\}$, and $s'$ is mixed on $\{a_j,b_j\}$ and anticomplete to $\{a_i,b_i\}$. By symmetry, we may assume that $s$ is adjacent to $a_i$, and $s'$ is adjacent to $a_j$. However, then $b_i-a_i-s-x-s'-a_j-b_j$ is a $P_7$ in $G$, a contradiction. This proves \textit{(1)}.

\bigskip

\noindent Write $C=\{c_1,...,c_t\}$.
By \textit{(1)}, applying \ref{half} in time $O(|V(G)|^2)$ we obtain a partition $S_1\cup ...\cup S_q$ of $S$ and an ordering $\{d_1,...,d_t\}$ of the vertices of $C$. Renumber the vertices of $\hat{A}$ and $\hat{B}$ so that $d_k$ corresponds to the edge $a_kb_k$ for every $k\in \{1,...,t\}$. 

\bigskip

\noindent \textit{(2) For every $i\in \{1,...,q\}$ and $j\in \{1,...,t\}$ the following hold:} 

\bigskip

\noindent \textit{(2a) The vertices in $S_i$ are either all anticomplete to or all mixed on $\{a_j,b_j\}$.}

\bigskip

\noindent \textit{(2b) If the vertices in $S_i$ are all mixed on $\{a_j,b_j\}$, then every vertex in $S_i\cup ...\cup S_q$ is mixed on $\{a_k,b_k\}$ for all 
$k\in\{j,...,t\}$.}

\bigskip

\noindent Proof: By \ref{half}.1, it follows that in $H$ every vertex $d_j$ is either complete or anticomplete to $S_i$. Hence, by the construction of $H$, in $G$ the vertices in $S_i$ are either all anticomplete to or all mixed on $\{a_j,b_j\}$. This proves \textit{(2a)}. By \ref{half}.2, it follows that in $H$ if $d_j$ is complete to $S_i$, then $\{d_j,...,d_t\}$ is complete to $S_i\cup ...\cup S_q$, and \textit{(2b)} follows. This proves \textit{(2)}.

\bigskip

For $j\in \{1,...,t\}$, we define the \textit{height} of the edge $a_jb_j$ to be the maximum $\ell$ such that both $a_j$ and $b_j$ have neighbors in $S_\ell\cup ...\cup S_q$. Since every vertex in $\hat{A}\cup \hat{B}$ has a neighbor in $S$, the height of an edge is well defined. If the height of the edge $\{a_j,b_j\}$ is $\ell<q$, then \textit{(2)} implies that one of the vertices in $\{a_j,b_j\}$ is anticomplete to $S_{\ell+1}\cup ...\cup S_q$, we call this the \textit{small} vertex in $\{a_j,b_j\}$ and denote it by $s_j$. We call the vertex of $\{a_j,b_j\}\setminus \{s_j\}$ the \textit{large} vertex in $\{a_j,b_j\}$ and denote it by $l_j$. Then $l_j$ is complete to $S_{\ell+1}\cup ...\cup S_q$. If the edge $a_jb_j$ has height $q$, then we arbitrary assign $\{l_j,s_j\}=\{a_j,b_j\}$. Next, let $N_j$ be the set of vertices in $S_\ell\cup...\cup S_q$ adjacent to $l_j$, and let $M_j$ be the set of vertices in $S_\ell\cup...\cup S_q$ adjacent to $s_j$.
Clearly, computing the height of $a_jb_j$, determining the small and large 
vertices, and  computing the $N_j$ and $M_j$  can be done in time $O(|V(G)|^2)$.
\bigskip

\noindent \textit{(3) For $j\in\{1,...,t\}$, suppose the edge $a_jb_j$ has height $\ell$. Then the following hold:} 

\bigskip

\noindent \textit{(3a) $N_j\cup M_j=S_\ell\cup...\cup S_q$, where $N_j,M_j$ are disjoint, both non-empty, and $M_j\subseteq S_\ell$.}

\bigskip

\noindent \textit{(3b) Let  $k\in\{1,...,t\}\setminus \{j\}$, and let $\{y,z\}=\{a_k,b_k\}$. If $y$ is anticomplete to $S_\ell\cup...\cup S_q$, then the height of $a_kb_k$ is strictly less than $\ell$, $y=s_k$, and both $N_j,M_j$ are proper subsets of $N_k$ .}

\bigskip

\noindent Proof: Since $G$ is triangle-free, it follows that $N_j,M_j$ are disjoint. By the definition of height, both $N_j,M_j$ are non-empty and, by \textit{(2a)}, it follows that every vertex in $S_\ell$ is mixed on $\{a_j,b_j\}$. Hence, by \textit{(2b)}, it follows that every vertex in $S_\ell\cup ...\cup S_q$ is mixed on $\{a_j,b_j\}$, and so $N_j\cup M_j=S_\ell\cup...\cup S_q$. Finally, by our choice of $s_j$, it follows that $M_j\subseteq S_\ell$. This proves \textit{(3a)}. Next, we prove \textit{(3b)}. Since $y$ is anticomplete to $S_\ell\cup...\cup S_q$, it follows, by the definition of height, that the height of $a_kb_k$ is strictly less than $\ell$, and that $y=s_k$. Hence, by \textit{(3a)}, it follows that $l_k$ is complete to $S_\ell\cup...\cup S_q$, and so both $N_j,M_j$ are proper subsets of $N_k$. This proves \textit{(3b)}.

\bigskip

We say that $(G,L,X)$ has a \textit{type I coloring} if there exists a coloring $c$ of $(G,L,X)$ such that $\{c(a_i),c(b_i)\}=\{2,3\}$ for some $i \in \{1,...,t\}$. We now prove the following:

\bigskip

\noindent \textit{(4) There exists a set $\mathcal{L}_1$ of $O(|V(G)|)$ of subpalettes of $L$ such that}

\bigskip

\noindent \textit{(4a) For each $L_1\in\mathcal{L}_1$, $L_1(v)=L(v)$ for every $v \in \{x\} \cup S\cup Y$, and $|L_1(v)| \leq 2$ for every $v \in \hat{A}\cup \hat{B}$, and}

\bigskip
 
\noindent \textit{(4b) $(G,L,X)$ has a type I coloring if and only if $(G,L_1,X)$ is colorable for some $L_1 \in \mathcal{L}_1$; and for every $L_1 \in \mathcal{L}_1$, every coloring of $(G,L_1,X)$ is a type I
coloring of $(G,L,X)$.}

\bigskip

\noindent \textit{Moreover, $\mathcal{L}_1$ can be constructed in time $O(|V(G)|^3)$.}

\bigskip

\noindent Proof: Let $i\in\{1,...,t\}$ and $\ell$ be the height of the edge $a_ib_i$. First, set 

\begin{itemize}

\item $L_i(l_i)=\{2\}$,

\item $L_i(s_i)=\{3\}$, and 

\item $L_i(v)=L(v)$ for all $v \in V(G) \setminus (\hat{A}\cup \hat{B}$).

\end{itemize}

\noindent Next, for each $j \in \{1,...,t\} \setminus \{i\}$, let $y \in \{a_j,b_j\}$. If $N(y)\cap (S_\ell\cup ...\cup S_q)\ne \emptyset$, then set 
$$L_i(y) = \begin{cases}
 L(y)\setminus \{3\} & \text{, \hspace{2ex}if $N(y)\cap (S_\ell\cup ...\cup S_q)\subseteq N_i$} \\
 L(y)\setminus \{2\} & \text{, \hspace{2ex}if $N(y)\cap (S_\ell\cup ...\cup S_q)\subseteq M_i$} \\
 \{1\} & \text{, \hspace{2ex}otherwise}\\
\end{cases}$$
Otherwise, if $y$ is anticomplete to $S_\ell\cup ...\cup S_q$, then set $L_i(y)=\{2,3\}$. As above, construct the subpalette $L_i'$ of $L$, but with the roles of the colors $2$ and 3 exchanged.

Clearly, if one of $(G, L_i,X)$ and $(G,L_i',X)$ is colorable, then there exists a type I coloring of $(G,L,X)$. Now, suppose $c$ is a type I coloring of $(G,L,X)$ with $\{c(a_i),c(b_i)\}=\{2,3\}$. By symmetry, we may assume $c(l_i)=2$ and $c(s_i)=3$. We claim that $c(v) \in L_i(v)$ for all $v\in V(G)$. By definition, this is the case for $a_i,b_i$, and for all $v \in V(G) \setminus (\hat{A}\cup \hat{B})$. Since $L_i(v)=L(v) \subseteq \{2,3\}$ for every $v\in S$, as $c(l_i)=2$, it follows that every vertex in $N_i$ is colored 3. Similarly, as $c(s_i)=3$, it follows that every vertex in $M_i$ is colored 2. Hence, by \textit{(3a)}, the colors of all the vertices in $S_\ell\cup...\cup S_q$ are forced by $c(l_i)$ and $c(s_i)$. Next, consider $j \in \{1,...,t\} \setminus \{i\}$, and let $\{y,z\}= \{a_j,b_j\}$. If $y$ has a neighbor $y' \in S_\ell \cup ...\cup S_q$, then, as $c(y) \neq c(y')$, it follows that $c(y) \in L_i(y)$ by construction. So we may assume that $y$ is anticomplete to $S_\ell \cup ...\cup S_q$, and so, by construction, $L_i(y)=\{2,3\}$. By \textit{(3b)}, it follows that the height of $a_jb_j$ is strictly less than $\ell$, $y=s_j$, and that both $N_i,M_i$ are proper subsets of $N_j$. Hence, by construction, $L_i(l_j)=\{1\}$. By \textit{(3a)}, both $N_i,M_i$ are non-empty, and so $c(l_j)=1$, which implies $c(s_j) \in \{2,3\}$, and the claim holds. 

For every $i\in \{1,...,t\}$, construct the subpalettes $L_i,L_i'$ of $L$ as above. Then $\mathcal{L}_1=\{L_1,...,L_t,L_1',...,L_t'\}$ satisfies \textit{(4a)} and \textit{(4b)}. For a fixed $i\in \{1,...,t\}$, the subpalettes $L_i,L_i'$ of $L$ can be constructed in time $O(|V(G)|^2)$, and so $\mathcal{L}_1$ can be constructed in time $O(|V(G)|^3)$. This proves \textit{(4)}.

\bigskip

We say that $(G,L,X)$ has a \textit{type II coloring} if there exist distinct $i,j \in \{1,...,t\}$, $z \in \{a_j,b_j\}$  and a coloring $c$ of $(G,L,X)$ such that 

\bigskip

\noindent \textit{(II.1) $a_ib_i$ and $a_jb_j$ have the same height $\ell$, and}

\bigskip
 
\noindent \textit{(II.2) $N_i$ is not a proper subset of $N(z)\cap (S_{\ell}\cup ...\cup S_q)$, and}

\bigskip

\noindent \textit{(II.3) writing $\{y\}=\{a_j,b_j\} \setminus \{z\}$, we have 
$c(s_i)=c(z)=1$, and $\{c(l_i),c(y)\}=\{2,3\}$.}

\bigskip

\noindent We now prove the following:

\bigskip

\noindent \textit{(5) There exists a set $\mathcal{L}_2$ of $O(|V(G)|^2)$ subpalettes of $L$ such that}

\bigskip

\noindent \textit{(5a) For each $L_2\in\mathcal{L}_2$, $L_2(v)=L(v)$ for every $v \in \{x\} \cup S\cup Y$, and $|L_2(v)| \leq 2$ for every $v \in \hat{A}\cup \hat{B}$, and}
 
\bigskip
 
\noindent \textit{(5b) $(G,L,X)$ has a type II coloring if and only if $(G,L_2,X)$ is colorable for some $L_2 \in \mathcal{L}_2$; and for every $L_2 \in \mathcal{L}_2$, every coloring of  
$(G,L_2,X)$ is a type II coloring of $(G,L,X)$.}

\bigskip

\noindent \textit{Moreover, $\mathcal{L}_2$ can be constructed in time $O(|V(G)|^4)$.}

\bigskip

\noindent Proof: Let $i,j\in\{1,...,t\}$ be distinct, $\{y,z\}=\{a_j,b_j\}$, and suppose \textit{(II.1)} and \textit{(II.2)} are satisfied. First, set 

\begin{itemize}
\item $L_{\{i,j\}}(l_i)=\{2\}$,
\item $L_{\{i,j\}}(y)=\{3\}$,
\item $L_{\{i,j\}}(s_i)=L_{\{i,j\}}(z)=\{1\}$,
 and
\item $L_{\{i,j\}}(v)=L(v)$ for all $v \in V(G) \setminus (\hat{A}\cup \hat{B}$).

\end{itemize}

\noindent Next, for every $k \in \{1,...,t\} \setminus \{i,j\}$, consider each $w\in \{a_k,b_k\}$. If $N(w)\cap (S_\ell\cup ...\cup S_q)\ne \emptyset$, then set 

$$L_{\{i,j\}}(w) = \begin{cases}
 L(w)\setminus \{3\} & \text{, \hspace{2ex}if $N(w)\cap (S_\ell\cup ...\cup S_q)\subseteq N_i$} \\
 L(w)\setminus \{2\} & \text{, \hspace{2ex}if $N(w)\cap (S_\ell\cup ...\cup S_q)\subseteq M_i$} \\
 \{1\} & \text{, \hspace{2ex}otherwise}\\
\end{cases}$$

\noindent Otherwise, if $w$ is anticomplete to $S_\ell\cup ...\cup S_q$, then set $L_{\{i,j\}}(w)=\{2,3\}$. As above, construct the subpalette $L_{\{i,j\}}'$ of $L$, but with the roles of the colors $2$ and 3 exchanged. 

Clearly, if one of $(G,L_{\{i,j\}},X)$ and $(G,L_{\{i,j\}}',X)$ is colorable, then there exists a type II coloring of $(G,L,X)$. Now, suppose $c$ is a type II coloring of $(G,L,X)$ with $c(s_i)=c(z)=1$, and $\{c(l_i),c(y)\}=\{2,3\}$. By symmetry, we may assume $c(l_i)=2$ and $c(y)=3$. We claim that $c(v) \in L_{\{i,j\}}(v)$ for all $v\in V(G)$. By definition, this is the case for $a_i,b_i,a_j,b_j$, and for all $v \in V(G) \setminus (\hat{A}\cup \hat{B})$. Since $L_{\{i,j\}}(v)=L(v) \subseteq \{2,3\}$ for every $v\in S$, as $c(l_i)=2$, it follows, that every vertex in $N_i$ is colored 3. Let $M'=N(y)\cap (S_{\ell}\cup ...\cup S_q)$ and $N'=N(z)\cap (S_{\ell}\cup ...\cup S_q)$. Similarly, as $c(y)=3$, it follows that every vertex in $M'$ is colored 2. Hence, $N_i\cap M'=\emptyset$. By \textit{(3a)}, $N_i\cup M_i=N'\cup M'$.
Since $N_i$ is not a proper subset of $N'$, it follows that $N_i=N'$ and
$M_i=M'$. And so, it follows that the colors of all the vertices in $S_\ell\cup...\cup S_q$ are forced; namely $c(v)=3$ for every $v \in N_i$, and $c(v)=2$ for every $v \in M_i$. Next, consider $k \in \{1,...,t\} \setminus \{i,j\}$, and let  $u \in \{a_k,b_k\}$. If $u$ has a neighbor $u' \in S_\ell \cup ...\cup S_q$, then, as $c(u) \neq c(u')$, it follows that $c(u) \in L_{\{i,j\}}(u)$ by construction. So we may assume that $u$ is anticomplete to $S_\ell \cup ...\cup S_q$, and so, by construction, $L_{\{i,j\}}(u)=\{2,3\}$. By \textit{(3b)}, it follows that the height of $a_kb_k$ is strictly less than $\ell$, $u=s_k$, and that both $N_i,M_i$ are proper subsets of $N_k$. Hence, by construction, $L_i(l_k)=\{1\}$. By \textit{(3a)}, both $N_i,M_i$ are non-empty, and so $c(l_k)=1$, which implies $c(s_k) \in \{2,3\}$, and the claim holds. 

For every distinct pair $i,j\in\{1,...,t\}$ satisfying \textit{(II.1)} and \textit{(II.2)}, construct the subpalettes $L_{\{i,j\}},L_{\{i,j\}}'$ of $L$ as above. Let $\mathcal{L}_2$ be the set of all the subpalettes thus constructed, and observe that $|\mathcal{L}_2| \leq 4n^2$. Then $\mathcal{L}_2$ satisfies \textit{(5a)} and \textit{(5b)}. For distinct $i,j\in \{1,...,t\}$, the corresponding subpalettes can be constructed in time $O(|V(G)|^2)$, and so $\mathcal{L}_2$ can be constructed in time $O(|V(G)|^4)$. This proves \textit{(5)}.

\bigskip

We say that $(G,L,X)$ has a \textit{type III coloring} if for some $i\in \{1,...,t\}$ where $a_ib_i$ has height $\ell$, there exists a coloring $c$ of $(G,L,X)$ such that 

\bigskip
 
\noindent \textit{(III.1) $c(l_i)\in\{2,3\}$ and $c(s_i)=1$,}

\bigskip 
 
\noindent \textit{(III.2) let $j \in \{1,...,t\} \setminus \{i\}$ such that the height of $a_jb_j$ is at most $\ell$; write $\{y,z\}= \{a_j,b_j\}$. If $N_i$ is a proper subset of $N(z)\cap(S_\ell\cup...\cup S_q)$, then $c(z)=1$.}

\bigskip
 

\bigskip

We now prove the following:

\bigskip

\noindent \textit{(6) Suppose $(G,L,X)$ has no type I coloring, and no type 
II coloring. Then  there exists a set $\mathcal{L}_3$ of $O(|V(G)|)$ subpalettes of $L$ such that}

\bigskip

\noindent \textit{(6a) For each $L_3 \in\mathcal{L}_3$, $L_3(v)=L(v)$ for every $v \in \{x\}\cup S\cup Y$, and $|L_3(v)| \leq 2$ for every $v \in \hat{A}\cup \hat{B}$, and}
 
\bigskip
 
\noindent \textit{(6b) $(G,L,X)$ has a type III coloring if and only if $(G, L_3,X)$ is colorable for some $L_3 \in \mathcal{L}_3$; and for every $L_3 \in \mathcal{L}_3$, every coloring of $(G, L_3,X)$ is a type III coloring of $(G,L,X)$.}

\bigskip

\noindent \textit{Moreover, $\mathcal{L}_3$ can be constructed in time $O(|V(G)|^3)$.}

\bigskip

\noindent Proof: Let $i\in\{1,...,t\}$ and $\ell$ be the height of the edge $a_ib_i$. First, set 

\begin{itemize}

\item $L_i(l_i)=\{2\}$,

\item $L_i(s_i)=\{1\}$, and 

\item $L_i(v)=L(v)$ for all $v \in V(G) \setminus (\hat{A}\cup \hat{B}$).

\end{itemize}

\noindent Next, for every $j \in \{1,...,t\} \setminus \{i\}$, consider each $y \in \{a_j,b_j\}$. If $N(y)\cap N_i\ne \emptyset$, then set $L_i(y)=L(y) \setminus \{3\}$. Otherwise, if $y$ is anticomplete to $N_i$, then, taking $z\in \{a_j,b_j\}\setminus \{y\}$, set

$$L_i(y) = \begin{cases}
L(y)\setminus \{1\} & \text{, \hspace{2ex}if $N_i$ is a proper subset of $N(z)\cap(S_\ell\cup...\cup S_q)$} \\
L(y)\setminus \{3\} & \text{, \hspace{2ex}if $N_i$ is not a proper subset of $N(z)\cap(S_\ell\cup...\cup S_q)$} \\
\end{cases}$$

\noindent As above, construct the subpalette $L_i'$ of $L$, but with the roles of the colors $2$ and 3 exchanged.

First, we argue that every coloring of $(G,L_i,X)$ and $(G,L_i',X)$ is a 
type III coloring of $(G,L,X)$.
Suppose that one of $(G,L_i,X)$ and $(G,L_i',X)$ is colorable. By symmetry, we may assume that $c$ is a coloring of $(G,L_i,X)$, and so $c(l_i)=2$, $c(s_i)=1$, and \textit{(III.1)} holds. Now, we show \textit{(III.2)} holds. Suppose for $j\in \{1,...,t\}\setminus \{i\}$ the height $\ell'$ of $a_jb_j$ is at most $\ell$. Fix $\{y,z\}=\{a_j,b_j\}$. Since $c(l_i)=2$, every vertex in $N_i$ is colored 3. 
Suppose $N_i$ is a proper subset of $N(z) \cap (S_\ell \cup ... \cup S_q)$, and so $c(z)\ne 3$. By \textit{(3a)}, it follows that $y$ is anticomplete to $N_i$, and so, by construction, $c(y)\ne 1$. Thus, since $(G,L,X)$ does not have a type I coloring, it follows that $c(z)=1$, and so \textit{(III.2)} holds. Hence, $c$ is a type III coloring of $(G,L,X)$.

Next, we argue that if $(G,L,X)$ has a type III coloring, then 
$(G,L_3,X)$ is colorable for some $L_3 \in \mathcal{L}_3$. Suppose that $c$ is a type III coloring of $(G,L,X)$ with $c(l_i)\in \{2,3\}$ and $c(s_i)=1$. By symmetry, we may assume $c(l_i)=2$. We claim that $c(v) \in L_i(v)$ for all $v\in V(G)$. By definition, this is the case for $a_i,b_i$, and for all $v \in V(G) \setminus (\hat{A}\cup \hat{B})$. Since $L_i(v)=L(v) \subseteq \{2,3\}$ for every $v\in S$, as $c(l_i)=2$, it follows that every vertex in $N_i$ is colored 3. Next, consider $j \in \{1,...,t\} \setminus \{i\}$, and let $\{y,z\} = \{a_j,b_j\}$. If $y$ has a neighbor in $N_i$, then $c(y)\ne3$, and it follows that $c(y) \in L_i(y)$ by construction. So we may assume that $y$ is anticomplete to $N_i$. By \textit{(3a)}, it follows that the height of $a_jb_j$ is at most $\ell$, and that $z$ is complete to $N_i$. Hence, $c(z)\in \{1,2\}$. If $N_i$ is a proper subset of $N(z)\cap (S_\ell\cup....\cup S_q)$, then $c(z)=1$, by \textit{(III.2)}, and so $c(y)\ne 1$. Thus, we may assume that $N_i$ is not a proper subset of $N(z)\cap (S_\ell\cup....\cup S_q)$. Since $z$ is complete to $N_i$, this implies that $N_i=N(z)\cap (S_\ell\cup....\cup S_q)$. Hence, by \textit{(3a)}, it follows that $M_i=N(y)\cap (S_\ell\cup....\cup S_q)$, and so $a_jb_j$ also has height $\ell$. Thus, since $(G,L,X)$ does not admit a type II coloring,
it follows that $c(y) \neq 3$, and the claim holds.

For every $i\in \{1,...,t\}$, construct the subpalettes $L_i,L_i'$ of $L$ as above. Then $\mathcal{L}_3=\{L_1,...,L_t,L_1',...,L_t'\}$ satisfies \textit{(6a)} and \textit{(6b)}. For a fixed $i\in \{1,...,t\}$, the subpalettes $L_i,L_i'$ of $L$ can be constructed in time $O(|V(G)|^2)$, and so $\mathcal{L}_3$ can be constructed in time $O(|V(G)|^3)$. This proves \textit{(6)}.

\bigskip

Finally, define the additional subpalette $\hat{L}$ of $L$ such that for $v\in V(G)$

$$\hat{L}(v) = \begin{cases}
L(v) & \text{, \hspace{2ex}if $v\in V(G)\setminus (\hat{A}\cup \hat{B})$} \\
 \{1\} & \text{, \hspace{2ex}if $v=l_i$ for some $i\in\{1,...,t\}$} \\
 \{2,3\} & \text{, \hspace{2ex}if $v=s_i$ for some $i\in\{1,...,t\}$} \\
\end{cases}$$

\bigskip

Define $\mathcal{L}=\mathcal{L}_1\cup \mathcal{L}_2\cup \mathcal{L}_3\cup \{\hat{L}\}$. Note, $\mathcal{L}$ has size $O(|V(G)|^2)$, as it is dominated by $\mathcal{L}_2$, and can be constructed in time $O(|V(G)|^4)$, by \textit{(4)},\textit{(5)} and \textit{(6)}. By \textit{(4a)}, \textit{(5a)}, and \textit{(6a)}, it follows that $\mathcal{L}$ satisfies \textit{(a)}. We now argue that $\mathcal{L}$ satisfies \textit{(b)}. Since $\mathcal{L}$ contains only subpalettes of $L$, it clearly follows that if $(G,L',X)$ is colorable for some $L' \in \mathcal{L}$, then $(G,L,X)$ is colorable; and for every $L' \in \mathcal{L}$, every
coloring of $(G,L',X)$ is a coloring of $(G,L,X)$. Now, suppose that $c$ is a coloring of $(G,L,X)$. If $c$ is a type I or II coloring of $(G,L,X)$, then, by \textit{(4b)} and \textit{(5b)}, it follows that $(G,L',X)$ is colorable for
some $L' \in \mathcal{L}_1\cup \mathcal{L}_2$. Hence, we may assume that $(G,L,X)$ admits no coloring of type I or II. If $c(l_i)=1$ for every $i\in\{1,...,t\}$, then $c(s_i)\in \{2,3\}$ for every $i\in \{1,...,t\}$, and it follows that $c$ is a coloring of $(G,\hat{L},X)$, so we may assume not.

We claim that $c$ is a type III coloring of $(G,L,X)$. Let $a_ib_i$ be an edge with minimum height $\ell_{min}$ such that $c(l_i)\in\{2,3\}$, and subject to that with $N_i$ maximal. By symmetry, we may assume $c(l_i)=2$. Since $c$ is not a type I coloring of $(G,L,X)$, it follows that $c(s_i)=1$, and thus \textit{(III.1)} is satisfied. By our choice of $\ell_{min}$ and $i$, for all $j\in \{1,...,t\}\setminus \{i\}$ if the height of $a_jb_j$ is at most $\ell_{min}$ and $N_i$ is a proper subset of $N(z) \cap (S_\ell \cup ... \cup S_q)$ for some $z \in \{a_j,b_j\}$, then $c(z)=1$, and so \textit{(III.2)} is satisfied. This proves that
 $c$ is a type III coloring of $(G,L,X)$.
Now, by \textit{(6b)}, it follows that $(G,L',X)$ is colorable for some 
$L \in \mathcal{L}_3$. This proves \ref{lemma1}.

\end{proof}

\section{Reducing the Palettes: Part II}

Let $G$ be a graph, $L$ a palette of $G$, and $X$ a set of subsets of $V(G)$. 
Let $\mathcal{P}$ be a collection of triples
$(G',L',X')$, where $G'$ is an induced subgraph of $G$,
$L'$ is a subpalette of  $L|_{V(G')}$, and $X'$ is a set of subsets
of $V(G')$. We say that $\mathcal{P}$ is 
{\em colorable} if at least one element of $\mathcal{P}$ is colorable. 
We say that
$\mathcal{P}$ is a {\em restriction} of $(G,L,X)$ if $\mathcal{P}$ being
colorable implies that $(G,L,X)$ is colorable, and we can extend a coloring of 
a colorable element of $\mathcal{P}$ to a coloring of $(G,L,X)$ in time 
$O(|V(G)|)$.

\bigskip

First, we make the following easy observation:

\begin{lemma}\label{nbrs}
Let $G$ be a graph, and let $v \in V(G)$.
Let $L$ be an order $k$ palette of a graph $G$, and let $X$ be a set of subsets
of $V(G) \setminus \{v\}$. If $|L(v)|=k$ and in every coloring of $(G\setminus v,L,X)$ at most $k-1$ colors are used to color $N(v)$, then $(G,L,X)$ is colorable if and only if $(G\setminus v,L|_{V(G)\setminus \{v\}},X)$ is colorable. Furthermore, we can extend a coloring of $(G\setminus v,L|_{V(G)\setminus \{v\}},X)$ to a coloring of $(G,L,X)$ in constant time.

\end{lemma}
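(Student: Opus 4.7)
The plan is to prove both directions of the equivalence directly from the definitions, since the statement is essentially a packaging of obvious facts. For the forward implication, I would observe that if $c$ is a coloring of $(G,L,X)$, then its restriction to $V(G)\setminus\{v\}$ is a coloring of $(G\setminus v,\,L|_{V(G)\setminus\{v\}},\,X)$: proper coloring along non-$v$ edges is preserved, the palette constraint is preserved (we have only restricted $L$), and the monochromaticity constraints from $X$ are preserved because, by hypothesis, every $S \in X$ is a subset of $V(G)\setminus\{v\}$, so the color classes of $c$ intersect each $S$ exactly as the restricted coloring does.

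For the reverse implication, I would start with a coloring $c$ of $(G\setminus v,\,L|_{V(G)\setminus\{v\}},\,X)$ and extend it to $v$. By hypothesis at most $k-1$ colors appear on $c(N(v))$; since $L$ has order $k$ and $|L(v)|=k$, we have $L(v)=\{1,\ldots,k\}$, so at least one color $i\in L(v)$ is absent from $c(N(v))$. Setting $c(v):=i$ yields a valid coloring of $(G,L,X)$: the new edges at $v$ are properly colored by the choice of $i$, we have $c(v)\in L(v)$, and the $X$-constraints are unaffected because $v$ lies in no set of $X$.

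The only thing to record about the running time is that, with $k$ fixed (and in our applications $k=3$), once the colors appearing on $N(v)$ are known from $c$, selecting an admissible $i\in L(v)$ is an $O(k)=O(1)$ operation, matching the stated constant-time bound for the extension step. I anticipate no genuine obstacle; the lemma is almost a tautology, and its purpose is merely to flag when a vertex may be safely removed from consideration while searching for a coloring.
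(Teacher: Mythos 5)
Your proof is correct and is exactly the straightforward argument the paper intends: the paper states this lemma as an "easy observation" and omits the proof, and your two directions (restriction, and extension of $v$ using a color of $L(v)=\{1,\ldots,k\}$ missing from $N(v)$, with the $X$-constraints untouched since $v$ lies in no set of $X$) fill it in correctly, including the constant-time extension.
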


\noindent Next, we prove the following:

\begin{lemma}\label{edge}
Let $G$ be a triangle-free graph, and let $u,v \in V(G)$ be adjacent.
Let $X$ be a set of subsets of $V(G) \setminus \{u,v\}$.
Let $L$ be an order $3$ palette of $G$. Assume that:

\begin{itemize}

\item $|L(v)|=3$, and in every coloring of $(G\setminus \{u,v\},L|_{V(G)\setminus \{u,v\}},X)$, at most two colors are used to color $N(v)\setminus \{u\}$, and

\item $|L(u)|=2$ with $L(y) \subseteq \{1,2,3\}\setminus L(u)$ for all $y\in N(u)\setminus \{v\}$.

\end{itemize}

\noindent Then $(G,L,X)$ is colorable if and only if $(G\setminus \{u,v\},L|_{V(G)\setminus \{u,v\}},X)$ is colorable. Furthermore, we can extend a coloring of $(G\setminus \{u,v\},L|_{V(G)\setminus \{u,v\}},X)$ to a coloring of $(G,L,X)$ in constant time.

\end{lemma}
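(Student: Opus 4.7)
The plan is to prove both directions of the equivalence. The forward direction is immediate: given a coloring of $(G,L,X)$, its restriction to $V(G)\setminus\{u,v\}$ is a coloring of $(G\setminus\{u,v\},\,L|_{V(G)\setminus\{u,v\}},\,X)$, because each $S\in X$ is contained in $V(G)\setminus\{u,v\}$, so monochromaticity is preserved, and all palette and edge constraints remain satisfied on the smaller vertex set.

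For the backward direction, the first observation I would make is that $|L(u)|=2$ implies $\{1,2,3\}\setminus L(u)=\{c^*\}$ for a single color $c^*$, so the hypothesis $L(y)\subseteq\{1,2,3\}\setminus L(u)$ forces $L(y)=\{c^*\}$ for every $y\in N(u)\setminus\{v\}$. Consequently, in every coloring $c$ of $(G\setminus\{u,v\},\,L|_{V(G)\setminus\{u,v\}},\,X)$, every vertex of $N(u)\setminus\{v\}$ is colored $c^*$. To extend such a coloring $c$, I invoke the second hypothesis: since $N(v)\setminus\{u\}$ uses at most two colors under $c$, there is some $c_v\in\{1,2,3\}=L(v)$ that does not appear on $N(v)\setminus\{u\}$. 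Set $c(v)=c_v$, and then set $c(u)$ to be any element of $L(u)\setminus\{c_v\}$, which is nonempty since $|L(u)|=2$.

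The verification that this extension is a valid coloring of $(G,L,X)$ is routine: the edge $uv$ is properly colored by construction; for every $w\in N(u)\setminus\{v\}$, we have $c(u)\in L(u)$ while $c(w)=c^*\notin L(u)$, so $c(u)\neq c(w)$; for every $y\in N(v)\setminus\{u\}$, $c(y)\neq c_v$ by the choice of $c_v$; and each $S\in X$ remains monochromatic because $S\cap\{u,v\}=\emptyset$. The extension runs in constant time because it consists of reading off $c^*$ from $L(u)$, choosing among at most three candidates for $c_v$, and writing down two color assignments. I do not anticipate a real obstacle here; the proof is essentially bookkeeping, and the only structural point is that the two palette hypotheses conspire to collapse $N(u)\setminus\{v\}$ onto the single color $c^*$, leaving both colors of $L(u)$ available for $u$ and a free color in $L(v)$ available for $v$. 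Note that the triangle-free assumption on $G$ is not explicitly used in this argument; it is part of the ambient setting for the paper and ensures the lemma will be applied meaningfully later.
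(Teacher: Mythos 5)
Your proof is correct and follows essentially the same route as the paper: restrict for the forward direction, and for the converse give $v$ the color missing from $N(v)\setminus\{u\}$ and give $u$ a color of $L(u)$ avoiding it, noting that all other neighbors of $u$ are forced to the unique color outside $L(u)$. The extra bookkeeping you include (and the observation that triangle-freeness is not needed) matches the paper's argument in substance.
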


\begin{proof}

Clearly, if $(G,L,X)$ is colorable, then $(G\setminus \{u,v\},L|_{V(G)\setminus \{u,v\}},X)$ is colorable. Now, suppose $c$ is a coloring of $(G\setminus \{u,v\},L|_{V(G)\setminus \{u,v\}},X)$. We may assume that only colors $1$ and $2$ are used on
$N(v) \setminus \{u\}$. Assigning $c(v)=3$ and $c(u)\in L(u)\setminus \{3\}$, we obtain a coloring of $(G,L,X)$. This proves \ref{edge}.

\end{proof}

\bigskip

\noindent Let $Z$ be a set of subsets of $V(G)$.
Given a coloring $c$ of $(G,L,Z)$ and a subset $X\subseteq V(G)$ define the subpalette $L^X_c$ of $L$ as follows: for every $v\in V(G)$, set $$L^X_c(v) = \begin{cases}
 c(v) & \text{, \hspace{2ex}if $v\in X$} \\ 
 L(v) & \text{, \hspace{2ex}otherwise} \\
\end{cases}.$$

\noindent For a subset $Y\subseteq V(G)\setminus X$, let $L^{X,Y}_c$ be the subpalette of $L^X_c$ obtained by updating the palettes of the vertices in $Y$ with respect to $X$.




\begin{lemma}\label{update}

If $c$ is a coloring of $(G,L,Z)$, then $c$ is a coloring of $(G,L^{X,Y}_c,Z)$.

\end{lemma}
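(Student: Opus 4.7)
The plan is a direct verification that $c$ satisfies all three requirements for being a coloring of $(G, L^{X,Y}_c, Z)$. Since $L^{X,Y}_c$ is a subpalette of $L$, the properness of $c$ as a graph coloring of $G$ and the monochromaticity of every $S \in Z$ under $c$ are both inherited immediately from the hypothesis that $c$ is a coloring of $(G, L, Z)$. So the entire argument reduces to showing that $c(v) \in L^{X,Y}_c(v)$ for every $v \in V(G)$.

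I would partition $V(G)$ into the three sets $X$, $Y$, and $V(G) \setminus (X \cup Y)$ and handle each case in turn. For $v \in X$, the definition of $L^X_c$ gives $L^X_c(v) = \{c(v)\}$, and since $Y \cap X = \emptyset$ the update step does not touch the palette at $v$, so $L^{X,Y}_c(v) = \{c(v)\}$. For $v \in V(G) \setminus (X \cup Y)$, neither the forcing step nor the update step modifies the palette at $v$, so $L^{X,Y}_c(v) = L(v)$, and $c(v) \in L(v)$ is part of the hypothesis.

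The only case requiring any work is $v \in Y$. Here $L^X_c(v) = L(v)$. Observing that for every $u \in X$ we have $L^X_c(u) = \{c(u)\}$, which has size $1$, the update step removes from $L(v)$ exactly the set $\{c(u) : u \in N(v) \cap X\}$; hence $L^{X,Y}_c(v) = L(v) \setminus \{c(u) : u \in N(v) \cap X\}$. Since $c$ is a proper coloring of $G$, we have $c(u) \neq c(v)$ for every $u \in N(v)$, so none of the colors removed from $L(v)$ equals $c(v)$. Combined with $c(v) \in L(v)$, this yields $c(v) \in L^{X,Y}_c(v)$, completing the verification.

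I do not expect a substantive obstacle here; the lemma is essentially the formal statement that the update procedure only deletes colors that would in any case be forbidden by properness at an already-committed neighbor, and therefore cannot invalidate a coloring that was already consistent with $L$.
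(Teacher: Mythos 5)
Your proof is correct and follows the same route as the paper, which simply observes that by the definition of updating $c(v)\in L^{X,Y}_c(v)$ for all $v\in V(G)$; you have merely spelled out the case analysis ($v\in X$, $v\in Y$, $v\notin X\cup Y$) that the paper leaves implicit. The key point you make explicit---that updating at a vertex $y\in Y$ removes exactly the colors $c(u)$ for $u\in N(y)\cap X$, none of which can equal $c(y)$ by properness---is precisely what the paper's ``clearly'' refers to.
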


\begin{proof}

Clearly, $c$ is a coloring of $(G,L^{X,Y}_c,Z)$, since, by the definition of updating, $c(v)\in L^{X,Y}_c(v)$ for all $v\in V(G)$. This proves \ref{update}.

\end{proof}

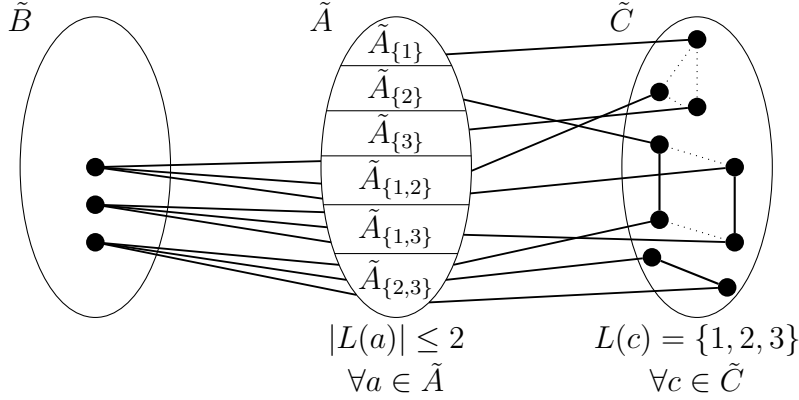
\begin{figure}[here]
\begin{center}

 \begin{tikzpicture}[scale=1]
 \GraphInit[vstyle=Simple]

 \tikzset{VertexStyle/.style = {shape = circle,fill = black,minimum size = 7pt,inner sep=0pt}}

\draw (0,0) ellipse (1cm and 2cm);
\node at (-1,2) {$\tilde{B}$};

 \Vertex[x=0,y=0]{1}
 \Vertex[x=0,y=-.5]{2}
 \Vertex[x=0,y=-1]{3}

\draw [black, line width=.025cm](0,0)--(3.0,.08);
\draw [black, line width=.025cm](0,0)--(3.012,-.22);
\draw [black, line width=.025cm](0,0)--(3.025,-.42);

\draw [black, line width=.025cm](0,-.5)--(3.05,-.6);
\draw [black, line width=.025cm](0,-.5)--(3.08,-.8);
\draw [black, line width=.025cm](0,-.5)--(3.135,-1);

\draw [black, line width=.025cm](0,-1)--(3.24,-1.3);
\draw [black, line width=.025cm](0,-1)--(3.3385,-1.5);
\draw [black, line width=.025cm](0,-1)--(3.472,-1.7);

\draw (4,0) ellipse (1cm and 2cm);
\node at (3,2) {$\tilde{A}$};

\draw(3+.26,1.35)--(5-.26,1.35);

\draw(3+.078,.75)--(5-.078,.75);

\draw(3,.15)--(5,.15);

\draw(3+.03,-.5)--(5-.03,-.5);

\draw(3+.18,-1.15)--(5-.18,-1.15);

\node at (4,1.65) {$\tilde{A}_{\{1\}}$};
\node at (4,1.05) {$\tilde{A}_{\{2\}}$};
\node at (4,.45) {$\tilde{A}_{\{3\}}$};
\node at (4,-.17) {$\tilde{A}_{\{1,2\}}$};
\node at (4,-.82) {$\tilde{A}_{\{1,3\}}$};
\node at (4,-1.5) {$\tilde{A}_{\{2,3\}}$};

\draw (8,0) ellipse (1cm and 2cm);
\node at (7,2) {$\tilde{C}$};

 \Vertex[x=8,y=1.7]{4}
 \Vertex[x=8,y=.8]{5}
 
 \draw[dotted](8,1.7)--(8,.7);

 \Vertex[x=7.5,y=.3]{6}
 \Vertex[x=7.5,y=-.7]{7}
 \Vertex[x=8.5,y=-.2+.2]{8}
 \Vertex[x=8.5,y=-1.2+.2]{9}
 
 \draw[dotted](7.5,.3)--(8.5,-.2+.2);
 \draw[dotted](7.5,-.7)--(8.5,-1.2+.2);

 \Vertex[x=7.4,y=-1.2]{11}
 \Vertex[x=8.4,y=-1.6]{12}

\Edges(11,12);
 
 \Edges(6,7);
 \Edges(8,9);
 
 \draw[dotted](8,1.7)--(7.5,1)--(8,.7);

 \draw [black, line width=.025cm](11)--(4.6+.061,-1.5);
 \draw [black, line width=.025cm](12)--(4.4+.0355,-1.8);

 \Vertex[x=7.5,y=1]{10}

 \draw [black, line width=.025cm](10)--(4+1,-.05);

 \draw [black, line width=.025cm](8,1.7)--(4+.66,1.5);
 \draw [black, line width=.025cm](5)--(4+.97,.5);
 \draw [black, line width=.025cm](7.5,.3)--(4+.895,.9);

 \draw [black, line width=.025cm](8)--(4+1-.019,-.35);

 \draw [black, line width=.025cm](7.5,-.7)--(4.76,-1.3);

 \draw [black, line width=.025cm](9)--(4+.895,-.9);

 \node at (4,-2.3) {$|L(a)|\leq 2$};
\node at (4,-2.8){$\forall a\in \tilde{A}$};

 \node at (8,-2.3) {$L(c)=\{1,2,3\}$};
\node at (8,-2.8){$\forall c\in \tilde{C}$};

\end{tikzpicture}

\caption{By \ref{lemma2}, when we encounter the above situation we can reduce determining if $(G,L,Z)$ is colorable to determining if a restriction 
$\mathcal{P}$ of $(G,L,Z)$ is colorable. The elements of $\mathcal{P}$ are ``closer'' to being of the form required by \ref{checkSubsets} than $(G,L,Z)$ is.}
\end{center}
\end{figure}

A vertex $u$ in a graph $G$ is {\em dominated} if there is 
$v \in V(G) \setminus \{u\}$ such that $u$ is non-adjacent to $v$, and 
$N(u) \subseteq N(v)$. In this case we say that $u$ is {\em dominated by $v$}.
The following is the main result of this section:

\begin{lemma}\label{lemma2}
 
Let $L$ be an order 3 palette of a $\{P_7,C_3\}$-free graph $G$,
such that $V(G)=\tilde{A}\cup \tilde{B}\cup \tilde{C}$, where

\begin{itemize}

\item $\tilde{B}$ is anticomplete to $\tilde{C}$,
\item every component of $\tilde{C}$ has at most 2 vertices,
\item every vertex of $\tilde{C}$ has a neighbor in $\tilde{A}$,
\item every vertex of $\tilde{A}$ has a neighbor in $\tilde{C}$,
\item $|L(a)|\leq 2$ for every $a \in \tilde{A}$, 
\item $|L(c)|= 3$ for every $c \in \tilde{C}$, and
\item $G$ contains no dominated vertices.


\end{itemize}

\noindent For every non-empty subset $X \subseteq \{1,2,3\}$, let $\tilde{A}_X=\{a\in \tilde{A}$ with $L(a)=X\}$. For every $c\in \tilde{C}$ and distinct $i,j\in\{1,2,3\}$, let $N_{\{i,j\}}(c)=N(c) \cap \tilde{A}_{\{i,j\}}$, and $M_{\{i,j\}}(c)=\tilde{A}_{\{i,j\}} \setminus N(c)$. 

\bigskip

\noindent Assume also that: 
\begin{itemize}

\item For every $c_1,c_2 \in \tilde{C}$ and $\{i,j,k\}= \{1,2,3\}$, $N_{\{i,j\}}(c_1) \cap M_{\{i,j\}}(c_2)$ is complete to $M_{\{i,k\}}(c_1) \cap N_{\{i,k\}}(c_2)$.

\item For distinct $i,j \in \{1,2,3\}$, there exists a vertex in $\tilde{B}$ complete to $\tilde{A}_{\{i,j\}}$.


\end{itemize}

\bigskip

Let $Z$ be a set of subsets of $A$.
\noindent Then there exists a restriction  $\mathcal{P}$ of $(G,L,Z)$,
of size  $O(|V(G)|^7)$, such that

\bigskip

\noindent (a) $\tilde{A}\cup \tilde{B}\subseteq V(G')$ for every
$(G',L',X') \in \mathcal{P}$, and
 
\bigskip

\noindent (b) Every $(G',L',X') \in \mathcal{P}$  is such that $L'(v)=L(v)$ for every $v \in \tilde{A}\cup \tilde{B}$, and $|L'(v)| \leq 2$ for every $v \in V(G')\cap \tilde{C}$, and $|X'|$ has size $O(|Z|+|V(G)|)$, and

\bigskip
 
\noindent (c) If $(G,L,Z)$ is colorable, then  $\mathcal{P}$ is colorable. 

\bigskip

\noindent Moreover, if the partition $\tilde{A}\cup \tilde{B}\cup \tilde{C}$ of $V(G)$ is given, then $\mathcal{P}$ can be computed in time $O(|V(G)|^7)$. 
\end{lemma}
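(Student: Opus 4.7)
The plan is to reduce palettes on $\tilde{C}$ by a guess-and-propagate procedure. Since $\tilde{B}$ is anticomplete to $\tilde{C}$, any information reaching a vertex $c\in\tilde{C}$ must travel through paths $c-a-c'$ with $a\in\tilde{A}$: if $L(a)=\{i,j\}$ and some neighbor of $a$ has been forced to color $i$, then $a$ is itself forced to $j$ and $j$ may be removed from $L'(c)$. Concretely, I enumerate every tuple $((w_1,i_1),\ldots,(w_r,i_r))$ with pairwise distinct $w_s\in\tilde{C}$ and $i_s\in L(w_s)$, for a fixed constant $r$ chosen so that $O(|V(G)|^{r})$ absorbs the running-time bound $O(|V(G)|^{7})$. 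For each tuple I set $L'(w_s)=\{i_s\}$, $L'(v)=L(v)$ otherwise, and iterate the propagation rule above (using also those $a\in\tilde{A}$ with $|L(a)|=1$, which are pre-forced, and those $u\in\tilde{C}$ whose palette has become a singleton in an earlier round). Any vertex with $L'(c)=\emptyset$ is added to a discard set $V_\emptyset$ and removed, giving $G'=G\setminus V_\emptyset$; I take $X'=Z\cup\{\{w_s\}:1\leq s\leq r\}$. The family $\mathcal{P}$ of these triples is the output.

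Properties (a) and (b) follow by construction: propagation never modifies $L'$ on $\tilde{A}\cup\tilde{B}$, so $V_\emptyset\subseteq\tilde{C}$ and $|X'|\leq |Z|+r$. One direction of (c), that every coloring of $(G',L',X')$ is a coloring of $(G,L,Z)$, is automatic because $L'$ is a subpalette of $L$ and each added singleton constraint is satisfied. What requires real proof is the converse: given any coloring $\phi$ of $(G,L,Z)$, some tuple in our enumeration has $i_s=\phi(w_s)$ for every $s$, and the resulting propagation leaves $|L'(c)|\leq 2$ for every remaining $c\in\tilde{C}\cap V(G')$.

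The principal obstacle is this reverse direction, for which I plan a three-stage structural argument. \textbf{(i)} Using $\{P_7,C_3\}$-freeness, any alternating walk $c_0-a_0-c_1-a_1-c_2-a_2-c_3$ with $c_i\in\tilde{C}$ and $a_i\in\tilde{A}$ either contains a $P_7$ or collapses (two $c_i$'s coincide or are adjacent in $\tilde{C}$, forming an edge component), so propagation chains through $\tilde{A}$ have bounded length. \textbf{(ii)} Introduce the equivalence $c\sim c'$ on $\tilde{C}$ by declaring that $\phi(c)$ forces $\phi(c')$ via such a short chain; the no-dominated-vertex hypothesis ensures that each step of the chain genuinely eliminates a color from a downstream palette (otherwise some neighborhood would be contained in another). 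The crossing hypothesis that $N_{\{i,j\}}(c_1)\cap M_{\{i,j\}}(c_2)$ is complete to $M_{\{i,k\}}(c_1)\cap N_{\{i,k\}}(c_2)$, combined with the $\tilde{B}$-vertex $b_{\{i,j\}}$ complete to each $\tilde{A}_{\{i,j\}}$, forbids too many distinct equivalence classes from coexisting under $\phi$: two such classes would yield $\tilde{A}$-witnesses whose forced edge (given by the crossing condition) joins to $b_{\{i,j\}}$ in a way that produces either a triangle or a $P_7$, a contradiction. \textbf{(iii)} Consequently $\phi$ induces only a bounded number of equivalence classes, and selecting one representative $w_s$ per class with $i_s=\phi(w_s)$ yields the required tuple.

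The delicate part is \textbf{(ii)}: a careful case analysis on which of the three palette classes $\tilde{A}_{\{1,2\}},\tilde{A}_{\{1,3\}},\tilde{A}_{\{2,3\}}$ the $\tilde{A}$-witnesses belong to, combined with a separate treatment of singleton versus edge components of $\tilde{C}$, must be organized to keep the number of equivalence classes below the constant implied by the $O(|V(G)|^{7})$ bound. Establishing that each combination of the four structural hypotheses rules out the right pathological configuration (and does so uniformly across $\phi$) is the technical heart of the proof.
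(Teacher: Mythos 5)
There is a genuine gap, and it sits exactly where you say the ``technical heart'' lies. Your construction only shrinks the palette of a vertex $c\in \tilde{C}$ when some neighbor $a\in\tilde{A}$ becomes outright forced to a single color, and forcing can only originate at the constantly many seeds $w_1,\dots,w_r$ (or at $\tilde{A}$-vertices that already have singleton lists). But the hard vertices of $\tilde{C}$ are precisely those whose entire $\tilde{A}$-neighborhood lies inside a single two-list class $\tilde{A}_{\{i,j\}}$: no amount of local forcing reaches them unless a chain of singleton reductions happens to arrive, and nothing in your procedure deletes them either, since their lists never become empty. For such vertices your output triples keep $|L'(c)|=3$, so conclusion (b) fails outright; enumerating more tuples does not help, because (b) must hold for \emph{every} member of $\mathcal{P}$. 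The paper deals with exactly these vertices by a different mechanism: it deletes them (justified by \ref{nbrs} and \ref{edge}) while recording that their $\tilde{A}$-neighborhoods must be monochromatic --- this is why (b) allows $|X'|=O(|Z|+|V(G)|)$ and why the sets $N(v)\cap\tilde{A}_{\{i,j\}}$ are added to $X'$ --- and it reduces the remaining three-lists (the two-vertex components $F$ of $\tilde{C}$ attached to $\tilde{A}_{\{i,j\}}$) by invoking \ref{lemma1} with $S=\tilde{A}_{\{i,j\}}$ and $x=b\in\tilde{B}$. Your $X'=Z\cup\{\{w_s\}\}$ contains no such monochromatic constraints, so even the deletion route is not available to you as described.

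The second, related gap is the unproved claim in stages (ii)--(iii) that any coloring $\phi$ gives rise to only a bounded number of ``forcing classes,'' so that a constant number $r$ of seeded vertices suffices. This is essentially the content of \ref{lemma1}, and its proof there is not a bounded-seed argument: it requires the height decomposition of the edges of $F$ relative to the ordered partition $S_1\cup\dots\cup S_q$ of $\tilde{A}_{\{i,j\}}$, a case analysis into type I/II/III colorings, and \emph{polynomially} many branches (e.g.\ over pairs of edges in the type II case); moreover the palette values it assigns to vertices anticomplete to the forced region (such as $L_i(s_j)=\{2,3\}$, $L_i(l_j)=\{1\}$) are justified by the global absence of type I/II colorings, not by any local propagation rule, so your forced-color propagation cannot reproduce them. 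Your observation (i) that $P_7$-freeness bounds the length of alternating $\tilde{C}$--$\tilde{A}$ chains, if anything, works against you: it limits how far information from a constant seed set can travel, while the vertices that must be handled can be far from any seed. To repair the proposal you would need either to prove a bounded-seed analogue of \ref{lemma1} (which I do not believe is true as stated) or to switch to the paper's strategy of coloring-type case analysis, vertex deletion with monochromatic-set bookkeeping in $X'$, and applications of \ref{lemma1}.
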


\begin{proof}
 
 Since $|L(a)|\leq 2$ for all $a\in \tilde{A}$, we obtain the partition $\tilde{A}_{\{1\}}\cup \tilde{A}_{\{2\}}\cup \tilde{A}_{\{3\}}\cup \tilde{A}_{\{1,2\}}\cup \tilde{A}_{\{1,3\}}\cup \tilde{A}_{\{2,3\}}$ of $\tilde{A}$. Let $i,j\in\{1,2,3\}$ be distinct. Since $G$ is triangle-free and there exists a vertex in $\tilde{B}$ complete to $\tilde{A}_{\{i,j\}}$, it follows that $\tilde{A}_{\{i,j\}}$ is stable. Further, there is no vertex $c\in \tilde{C}$ such that $N(c)\subseteq \tilde{A}_{\{i,j\}}$, as such a vertex would be dominated by any vertex in $\tilde{B}$ complete to $\tilde{A}_{\{i,j\}}$. Thus, it follows that:


\bigskip

\noindent \textit{(1) Every vertex $c\in \tilde{C}$ such that $N(c)\cap \tilde{A}\subseteq \tilde{A}_{\{i,j\}}$ for some distinct $i,j\in\{1,2,3\}$ belongs to a component of $\tilde{C}$ of size $2$.}

\bigskip

A coloring $c$ of $(G,L,Z)$ is a \textit{type I coloring} if for distinct $i,j \in \{1,2,3\}$ there exists $z\in \tilde{C}$ and $x,y\in N_{\{i,j\}}(z)$ with $c(x)=i$ and $c(y)=j$.

\bigskip

\noindent \textit{(2) There exists a restriction $\mathcal{P}_1$ of $(G,L,Z)$,
of size  $O(|V(G)|^5)$, such that:}

\bigskip

\noindent \textit{(2a)  $\tilde{A}\cup \tilde{B}\subseteq V(G')$ and $X'=Z$ 
for every $(G',L',X') \in \mathcal{P}_1$,}
 
\bigskip

\noindent \textit{(2b) Every $(G',L',X') \in \mathcal{P}_1$  is such that $L'(v)=L(v)$ for every $v \in \tilde{A}\cup \tilde{B}$, and $|L'(v)| \leq 2$ for every $v \in V(G')\cap \tilde{C}$, and}

\bigskip
 
\noindent \textit{(2c) If $(G,L,Z)$ has a type I coloring, then $\mathcal{P}_1$ is colorable.}

\bigskip

\noindent \textit{Moreover, $\mathcal{P}_1$ can be computed in time $O(|V(G)|^7)$.}

\bigskip

\noindent Proof: Let $z\in \tilde{C}$, $\{i,j,k\}=\{1,2,3\}$, and $x,y \in N_{\{i,j\}}(z)$. Let $U_{(x,y,z)}$ be the set of all vertices $u \in  M_{\{i,k\}}(z)\cup M_{\{j,k\}}(z)$ for which there exists $c \in \tilde{C}$ such that $c$ is adjacent to $u$ and anticomplete to $\{x,y\}$. Since $x,y \in N_{\{i,j\}}(z)$, by assumption, it follows that $\{x,y\}$ is complete to $U_{(x,y,z)}$. Set

\begin{itemize}
 
\item $L'(v)=\{i\}$, for all $v\in \{x\}\cup N_{\{i,k\}}(z)$,
\item $L'(v)=\{j\}$, for all $v\in \{y\}\cup N_{\{j,k\}}(z)$,
\item $L'(v)=\{k\}$ for $v\in \{z\}\cup U_{(x,y,z)}$, and
\item $L'(v)=L(v)$, for every $v\in V(G)\setminus(N_{\{i,k\}}(z)\cup N_{\{j,k\}}(z)\cup U_{(x,y,z)}\cup \{x,y,z\})$.

\end{itemize}

\noindent 
Let $A=\{x,y\} \cup (\tilde{A}\setminus \tilde{A}_{\{i,j\}})$. Next, update the palettes of the vertices in $\tilde{C}$ with respect to $A$. If $c\in \tilde{C}$ has a neighbor in $(M_{\{i,k\}}(z)\cup M_{\{j,k\}}(z))\setminus U_{(x,y,z)}$, then it follows, by the definition of $U_{(x,y,z)}$, that $c$ is not anticomplete to $\{x,y\}$. 
And so after updating, for every $c\in \tilde{C}$ if $N(c)\cap (\tilde{A}\setminus \tilde{A}_{\{i,j\}})$ is non-empty, then $|L'(c)|\leq 2$. Let $F$ be the vertex set of the 2-vertex components of $\tilde{C}$ such that both vertices of the component have a neighbor in $\tilde{A}_{\{i,j\}}$. Initialize $D^{ij}_{(x,y,z)}=\emptyset$. Consider a vertex $c_1\in \tilde{C}\setminus F$ with $N(c_1)\cap \tilde{A}\subseteq \tilde{A}_{\{i,j\}}$. Then $|L'(c_1)|=3$, and, by (1), $c_1$ is adjacent to some $c_2\in \tilde{C}$ which is anticomplete to $\tilde{A}_{\{i,j\}}$. 
Since every vertex of $\tilde{C}$ has a neighbor in $\tilde{A}$, it follows that $|L'(c_2)|\leq 2$. If $|L'(c_2)|=1$, set $L'(c_1)=L'(c_1)\setminus L'(c_2)$. Otherwise, $|L'(c_2)|=2$. Since $c_2$ is anticomplete to $\tilde{A}_{\{i,j\}}$, it
follows that  that $$N(c_2) \cap \tilde{A} \subseteq \tilde{A}_{\{1\}} \cup \tilde{A}_{\{2\}} \cup \tilde{A}_{\{3\}} \cup  N_{\{i,k\}} (z) \cup N_{\{j,k\}}(z) \cup U_{(x,y,z)}.$$
In particular $|L'(v)|=1$ for every $v \in N(c_2) \cap \tilde{A}$, and
so, by the definition of updating, $L'(v) \subseteq \{1,2,3\}\setminus L(c_2)$ for all $v\in N(c_2)\setminus \{c_1\}$. Therefore, by \ref{edge}, it follows that $(G\setminus D^{ij}_{(x,y,z)},L'|_{V(G)\setminus D^{ij}_{(x,y,z)}},Z)$ is colorable if and only if $(G\setminus (D^{ij}_{(x,y,z)}\cup \{c_1,c_2\}),L'|_{V(G)\setminus (D^{ij}_{(x,y,z)}\cup \{c_1,c_2\})},Z)$ is colorable. In this case, set $D^{ij}_{(x,y,z)}=D^{ij}_{(x,y,z)}\cup \{c_1,c_2\}$. Carry out this procedure for every $c_1\in \tilde{C}\setminus F$ with $N(c_1)\cap \tilde{A}\subseteq \tilde{A}_{\{i,j\}}$. Let $G^{ij}_{(x,y,z)}=G\setminus D^{ij}_{(x,y,z)}$. Repeatedly applying the previous argument, it follows that $(G,L',Z)$ is colorable if and only if $(G^{ij}_{(x,y,z)},L'|_{V(G^{ij}_{(x,y,z)})},Z)$ is colorable. 
By assumption, there exists $b\in \tilde{B}$ complete to $\tilde{A}_{\{i,j\}}$. Let $\mathcal{L}'$ be the set of $O(|V(G)|^2)$ subpalettes of $L'|_{V(G^{ij}_{(x,y,z)})}$ obtained from \ref{lemma1} applied with

\begin{itemize}
\item $x=b$,
\item $S=\tilde{A}_{\{i,j\}}$, 
\item $\hat{A}\cup \hat{B}=F$, 
\item $Y=V(G^{ij}_{(x,y,z)})\setminus (\{b\}\cup \tilde{A}_{\{i,j\}}\cup F)$, and
\item $X=Z$.

\end{itemize}

\noindent For each $\hat{L}\in \mathcal{L}'$, define the subpalette $L^{ij}_{(x,y,z)}(\hat{L})$ of $L|_{V(G^{ij}_{(x,y,z)})}$ as follows: For $v\in V(G^{ij}_{(x,y,z)})$ set
$$L^{ij}_{(x,y,z)}(\hat{L})(v) = \begin{cases}
 \hat{L}(v) & \text{, \hspace{2ex}if $v\in \tilde{C}\setminus D^{ij}_{(x,y,z)}$} \\ 
 L'(v) & \text{, \hspace{2ex}otherwise} \\
\end{cases}$$

\noindent Let 
$$\mathcal{P}^{ij}_{(x,y,z)}=\{(G^{ij}_{(x,y,z)},L^{ij}_{(x,y,z)}(\hat{L}),Z) :\hat{L}\in \mathcal{L}'\} .$$ 

Let $\mathcal{P}_1$ be the union of all $\mathcal{P}^{ij}_{(x,y,z)}$. Since there are at most $3|V(G)|^3$ choices of $x,y,z$ and $\{i,j\}$, and each set $F$ can be found in time $O(|V(G)|^2)$, it follows that building $\mathcal{P}_1$ requires $O(|V(G)|^3)$ applications of \ref{lemma1}, and so $\mathcal{P}_1$ can be constructed in time $O(|V(G)|^7)$. Now, we argue that $\mathcal{P}_1$ is indeed a restriction $(G,L,Z)$. Suppose $\mathcal{P}^{ij}_{(x,y,z)}$ is colorable. By \ref{lemma1}, it follows that $(G^{ij}_{(x,y,z)},L'|_{V(G^{ij}_{(x,y,z)})},Z)$ is colorable, and so, as observed above, by construction and \ref{edge}, it follows that $(G,L',Z)$ is colorable. Since $L'$ is a subpalette of $L$, we deduce that $(G,L,Z)$ is colorable.
This proves that  $\mathcal{P}_1$ is indeed a restriction $(G,L,Z)$.

By construction and \ref{lemma1}, \textit{(2a)} and \textit{(2b)} hold. Next, we show that \textit{(2c)} holds. We need to prove that if $(G,L,Z)$ admits a
type I coloring, then $\mathcal{P}_1$ is colorable.
 Suppose $c$ is a type I coloring of $(G,L,Z)$. Let $z \in \tilde{C}$ with $c(z)=k$ and $x,y \in N_{\{i,j\}}(z)$ with $c(x)=i$ and $c(y)=j$. We claim that the restriction $\mathcal{P}^{ij}_{(x,y,z)}$ is colorable. Since $z$ is complete to $N_{\{i,k\}}(z)\cup N_{\{j,k\}}(z)$, it follows that $c(v)=i$ for every $v \in N_{\{i,k\}}(z)$ and $c(v) = j$ for every $v \in N_{\{j,k\}}(z)$. By assumption, for every $c_1,c_2 \in \tilde{C}$ we have that $N_{\{i,j\}}(c_1) \cap M_{\{i,j\}}(c_2)$ is complete to $M_{\{i,k\}}(c_1) \cap N_{\{i,k\}}(c_2)$. Taking $c_1=z$, it follows that $\{x,y\}$ is complete to every $v\in M_{\{i,k\}}(z)\cup M_{\{j,k\}}(z)$ for which there exists $c \in \tilde{C}$ such that $c$ is adjacent to $v$ and anticomplete to $\{x,y\}$, that is, $\{x,y\}$ is complete to $U_{(x,y,z)}$. Consequently, $c(v)=k$ for all $v\in U_{(x,y,z)}$. 
Also $c(v)=k$ for every $v\in \tilde{B}$ that is complete to $\tilde{A}_{\{i,j\}}$.
By \ref{update}, it follows that $c$ is a coloring of $(G,L^{A,\tilde{C}}_c,Z)$. 
Let $F$ be the vertex set of the 2-vertex components of $\tilde{C}$ such that both vertices of the component have a neighbor in $\tilde{A}_{\{i,j\}}$.
Let $C'$ be the set of vertices $v\in \tilde{C} \setminus F$ with $N(v) \cap\tilde{A} \subseteq \tilde{A}_{\{i,j\}}$. By construction, $c(v)\in L''(v)$ for all $L''\in \mathcal{L}^{ij}_{(x,y,z)}$ and $v\in V(G^{ij}_{(x,y,z)})\setminus (C' \cup F)$. Next, we show that the claim holds for every vertex in $C'\setminus D^{ij}_{(x,y,z)}$. By (1) and construction, every vertex $c_1\in C'\setminus D^{ij}_{(x,y,z)}$ is adjacent to some $c_2\in \tilde{C}\setminus D^{ij}_{(x,y,z)}$, and  $|L^{A,\tilde{C}}_c(c_2)|=1$. Since $c(c_1)\ne c(c_2)$, it follows that $c(c_1)\in L''(c_1)$ for all $L''\in\mathcal{L}^{ij}_{(x,y,z)}$. Now $\mathcal{P}^{ij}_{(x,y,z)}$ is colorable by \ref{lemma1} and \textit{(2c)} follows. This proves \textit{(2)}.

\bigskip

Let $\mathcal{X}=\{N(v) \cap \tilde{A}_{\{i,j\}} : v \in \tilde{C}, 1 \leq i <j \leq 3 \}$, and let $\mathcal{Z}=Z \cup \mathcal{X}$.
 
\noindent From \textit{(2)} it follows that:

\bigskip

\noindent \textit{(3) If $(G,L,Z)$ does not have a type I coloring, then 
$(G,L,Z)$ is colorable if and only if $(G,L,\mathcal{Z})$ is colorable.}

\bigskip

\noindent \textit{(4) Assume that $(G,L,Z)$ does not have a type I coloring. If there exists a vertex in $\tilde{C}$ with neighbors in all three of $\tilde{A}_{\{1,2\}}$, $\tilde{A}_{\{1,3\}}$ and $\tilde{A}_{\{2,3\}}$, then there exists a restriction $\mathcal{P}_2$ of size  $O(|V(G)|^2)$  of $(G,L,Z)$ such that:}

\bigskip

\noindent \textit{(4a)  $\tilde{A}\cup \tilde{B}\subseteq V(G')$ 
and $X'=\mathcal{Z}$ for every $(G',L',X') \in \mathcal{P}_2$,}
 
\bigskip

\noindent \textit{(4b) Every $(G',L',X') \in \mathcal{P}_2$  is such that $L'(v)=L(v)$ for every $v \in \tilde{A}\cup \tilde{B}$, and $|L'(v)| \leq 2$ for every $v \in V(G')\cap \tilde{C}$, and}

\bigskip

\noindent \textit{(4c) If $(G,L,Z)$ is colorable, then $\mathcal{P}_2$ is colorable.}

\bigskip

\noindent \textit{Moreover, $\mathcal{P}_2$ can be computed in time $O(|V(G)|^4)$.}

\bigskip 

\noindent \textit{If there does not exist a vertex in $\tilde{C}$ with neighbors in all three of $\tilde{A}_{\{1,2\}}$, $\tilde{A}_{\{1,3\}}$ and $\tilde{A}_{\{2,3\}}$, then $\mathcal{P}_2=\emptyset$.}

\bigskip

\noindent Proof:
Let $w \in \tilde{C}$. For $\{i,j,k\}= \{1,2,3\}$, let $P_{\{j,k\}}(w)$ be the set of vertices $v \in M_{\{j,k\}}(w)$ for which there exists $c \in \tilde{C}$ adjacent to $v$ and anticomplete to $N_{\{i,j\}}(w) \cup N_{\{i,k\}}(w)$. By assumption, it follows that $P_{\{j,k\}}(w)$ is complete to $N_{\{i,j\}}(w) \cup N_{\{i,k\}}(w)$. Thus, if $v\in M_{\{j,k\}}(w)\setminus P_{\{j,k\}}(w)$, then 
every vertex in $N(v)\cap \tilde{C}$ has a neighbor in $N_{\{i,j\}}(w) \cup N_{\{i,k\}}(w)$.

If there does not exist a vertex in $\tilde{C}$ with neighbors in all three of $\tilde{A}_{\{1,2\}}$, $\tilde{A}_{\{1,3\}}$ and $\tilde{A}_{\{2,3\}}$, set $\mathcal{P}_2=\emptyset$, and halt.
Otherwise, suppose $w \in \tilde{C}$ has neighbors in all three of $\tilde{A}_{\{1,2\}}$, $\tilde{A}_{\{1,3\}}$ and $\tilde{A}_{\{2,3\}}$. Let $\{i,j,k\}=\{1,2,3\}$
be such that $P_{\{j,k\}}(w)=\emptyset$ (we will show later that if $(G,L,Z)$ is
colorable, then such  $P_{\{j,k\}}(w)$ exists); if no such $\{i,j,k\}$ exists, 
set  $\mathcal{P}_2=\emptyset$, and halt. Set

\begin{itemize}

\item $L'(v)=\{i\}$, for all $v\in \{w\}\cup P_{\{i,j\}}(w)$,
\item $L'(v)=\{j\}$, for all $v\in N_{\{i,j\}}(w)\cup N_{\{j,k\}}(w)$,
\item $L'(v)=\{k\}$, for all $v\in N_{\{i,k\}}(w)$, and
\item $L'(v)=L(v)$, for every $v\in V(G)\setminus(N_{\{i,j\}}(w)\cup N_{\{j,k\}}(w)\cup N_{\{i,k\}}(w)\cup P_{\{i,j\}}(w) \cup \{w\})$.

\end{itemize}

\noindent Let $A=\tilde{A}_{\{1\}} \cup \tilde{A}_{\{2\}} \cup \tilde{A}_{\{3\}} \cup N_{\{i,j\}}(w)\cup N_{\{j,k\}}(w)\cup N_{\{i,k\}}(w)\cup P_{\{i,j\}}(w)$. Next, update the palettes of all the vertices in $\tilde{C}$ with respect to $A$. If $c\in \tilde{C}$ has a neighbor in $(M_{\{i,j\}}(w)\cup M_{\{j,k\}}(w))\setminus P_{\{i,j\}}$, then $c$ is not anticomplete to $N_{\{i,j\}}(w)\cup N_{\{j,k\}}(w)\cup N_{\{i,k\}}(w)$,
and so after updating, for every $c\in \tilde{C}$, if $N(c)\cap (\tilde{A}\setminus M_{\{i,k\}}(w))\neq \emptyset$, then $|L'(v)| \leq 2$. 
Let $F$ to be the vertex set of the 2-vertex components of $\tilde{C}$ such that both vertices of the component have a neighbor in $M_{\{i,k\}}(w)$. Initialize $D_{(i,j,k)}=\emptyset$. Consider a vertex $c_1\in \tilde{C}\setminus F$ with $N(c_1)\cap \tilde{A}\subseteq M_{\{i,k\}}(w)$. Then $|L'(c_1)|=3$, and, by (1), $c_1$ is adjacent to some $c_2\in \tilde{C}$. 
Since in every coloring of $(G \setminus c_1,L', \mathcal{Z})$ at most two colors appear in $N(c_1)\subseteq \{c_2\}\cup M_{\{i,k\}}(w)$, by \ref{nbrs}, it follows that $(G\setminus D_{(i,j,k)},L'|_{V(G)\setminus D_{(i,j,k)}}, \mathcal{Z})$ is colorable if and only if $(G\setminus (D_{(i,j,k)}\cup \{c_1\}),L'|_{V(G)\setminus (D_{(i,j,k)}\cup \{c_1\})}, \mathcal{Z})$ is colorable. In this case, set $D_{(i,j,k)}=D_{(i,j,k)}\cup \{c_1\}$. Carry out this procedure for every $c_1\in \tilde{C}\setminus F$ with $N(c_1)\cap \tilde{A}\subseteq M_{\{i,k\}}(w)$. Let $G_{(i,j,k)}=G\setminus D_{(i,j,k)}$. Repeatedly applying the previous argument, it follows that $(G,L',\mathcal{Z})$ is colorable if and only if $(G_{(i,j,k)},L'|_{V(G_{(i,j,k)})}, \mathcal{Z})$ is colorable. 
By assumption, there exists $b\in \tilde{B}$ complete to $\tilde{A}_{\{i,k\}}$; therefore $b$ is complete to $M_{\{i,k\}}(w)$.  Let $\mathcal{L}'$ be the set of $O(|V(G)|^2)$ subpalettes of $L'|_{V(G_{(i,j,k)})}$ obtained from \ref{lemma1} applied with

\begin{itemize}

\item $x=b$, 
\item $S=M_{\{i,k\}}(w)$, 
\item $\hat{A}\cup \hat{B}=F$, 
\item $Y=V(G_{(i,j,k)})\setminus (\{b\}\cup M_{\{i,k\}}(w)\cup F)$, and
\item $X=\mathcal{Z}$.

\end{itemize}

\noindent For each $\hat{L}\in\mathcal{L}'$, define the subpalette $L_{(i,j,k)}(\hat{L})$ of $L|_{V(G_{(i,j,k)})}$ as follows: For $v\in V(G_{(i,j,k)})$ set

$$L_{(i,j,k)}(\hat{L})(v) = \begin{cases}
 \hat{L}(v) & \text{, \hspace{2ex}if $v\in \tilde{C}\setminus D_{(i,j,k)}$} \\ 
L'(v) & \text{, \hspace{2ex}otherwise} \\
\end{cases}$$

\noindent Let 
$$\mathcal{P}_{(i,j,k)}=\{(G_{(i,j,k)},L_{(i,j,k)}(\hat{L}), \mathcal{Z}) : \hat{L}\in \mathcal{L}'\}.$$ 

Let $\mathcal{P}_2$ be union of all $\mathcal{P}_{(i,j,k)}$. Since $w$ can be found in time $O(|V(G)|^2)$, there are $6$ choices of $(i,j,k)$, and each set $F$ can be found in time $O(|V(G)|^2)$, it follows that building $\mathcal{P}_2$ requires $6$ applications of \ref{lemma1}, and so $\mathcal{P}_2$ can be constructed in time $O(|V(G)|^4)$. Now, we argue that $\mathcal{P}_2$ is indeed a restriction of $(G,L,Z)$. Suppose $\mathcal{P}_{(i,j,k)}$ is colorable. By \ref{lemma1}, it follows that $(G_{(i,j,k)},L'|_{V(G_{(i,j,k)})}, \mathcal{Z})$ is colorable, and so, as observed above, by construction and \ref{nbrs}, it follows that $(G,L',\mathcal{Z})$ is colorable. Since $L'$ is a subpalette of $L$, and $Z \subseteq \mathcal{Z}$,  we deduce that $(G,L,Z)$ is colorable.

By construction and \ref{lemma1}, \textit{(4a)} and \textit{(4b)} hold. Next, we show that \textit{(4c)} holds. Suppose $c$ is a coloring of $(G,L,Z)$ with $c(w)=i$ where $w \in \tilde{C}$ has neighbors in all three of $\tilde{A}_{\{1,2\}}$, $\tilde{A}_{\{1,3\}}$ and $\tilde{A}_{\{2,3\}}$. By \textit{(3)},
$c$ is a coloring of $(G,L, \mathcal{Z})$,  and by symmetry, we may assume that $j,k \in\{1,2,3\}\setminus \{i\}$ are such that $c(v)=j$ for all $v \in N_{\{j,k\}}(w)$. We claim that the restriction $\mathcal{P}_{(i,j,k)}$ is colorable. 
Clearly, every set in $\mathcal{Z}$ is monochromatic in $c$.
It follows that $c(v)=j$ for every $v \in N_{\{i,j\}}(w)$, and $c(v)=k$ for every $v \in N_{\{i,k\}}(w)$. Since $N_{\{j,k\}}(w)$ is complete to $P_{\{i,j\}}(w)$, it follows that $c(v)=i$ for every $v \in P_{\{i,j\}}(w)$. Since $N_{\{i,j\}}(w)\cup N_{\{i,k\}}(w)$ is complete to $P_{\{j,k\}}(w)$, it follows that $P_{\{j,k\}}(w)$ is empty.  Let $A=\tilde{A}_{\{1\}} \cup \tilde{A}_{\{2\}} \cup \tilde{A}_{\{3\}} \cup N_{\{i,j\}}(w)\cup N_{\{j,k\}}(w)\cup N_{\{i,k\}}(w)\cup P_{\{i,j\}}(w)$.

By \ref{update}, it follows that $c$ is a coloring of $(G,L^{A,\tilde{C}}_c)$. 
Let $F$ to be the vertex set of the 2-vertex components of $\tilde{C}$ such that both vertices of the component have a neighbor in $M_{\{i,k\}}(w)$.
Let $C'$ be the set of vertices $v\in \tilde{C} \setminus F$ with $N(v) \cap \tilde{A} \subseteq M_{\{i,k\}}(w)$. By construction, $c(v)\in L''(v)$ for all $L''\in \mathcal{L}_{(i,j,k)}$ and $v\in V(G_{(i,j,k)})\setminus (C' \cup F)$. Next, we show that the claim holds for every vertex in $C'\setminus D_{(i,j,k)}$. By (1) and construction, every vertex $c_1\in C'\setminus D_{(i,j,k)}$ is adjacent to some vertex $c_2\in \tilde{C}\setminus D_{(i,j,k)}$. By construction, both $c_1$ and $c_2$ have a neighbor in $M_{\{i,k\}}(w)$, hence belong to $F$, and so \textit{(4c)} follows from \ref{lemma1}. This proves \textit{(4)}.

\bigskip

A coloring $c$ of $(G,L,Z)$ is a \textit{type II coloring} if there exists $w \in \tilde{C}$ with neighbors $x,y \in \tilde{A}_{\{1,2\}}\cup \tilde{A}_{\{1,3\}}\cup \tilde{A}_{\{2,3\}}$ such that $c(x) \neq c(y)$.

\bigskip

\noindent \textit{(5) Assume that $(G,L,Z)$ does not have a type I coloring, and that no vertex of $\tilde{C}$ has neighbors in all three of $\tilde{A}_{\{1,2\}}$, $\tilde{A}_{\{1,3\}}$ and $\tilde{A}_{\{2,3\}}$. If $G$ has a type II coloring, then there exists a restriction  $\mathcal{P}_3$ of $(G,L,Z)$ of size $O(|V(G)|^7)$ such that:}

\bigskip

\noindent \textit{(5a)  $\tilde{A}\cup \tilde{B}\subseteq V(G')$ and 
$X'=\mathcal{Z}$ for every $(G',L',X') \in \mathcal{P}_3$,}
 
\bigskip

\noindent \textit{(5b) Every $(G',L',X') \in \mathcal{P}_3$  is such that $L'(v)=L(v)$ for every $v \in \tilde{A}\cup \tilde{B}$, and $|L'(v)| \leq 2$ for every $v \in V(G')\cap \tilde{C}$, and}

\bigskip

 \noindent \textit{(5c) $\mathcal{P}_3$ is colorable.}

\bigskip

\noindent \textit{Moreover, $\mathcal{P}_3$ can be computed in time $O(|V(G)|^7)$.}

\bigskip

\noindent \textit{If $G$ does not have a type II coloring, then 
$\mathcal{P}_3=\emptyset$.}

\bigskip

\noindent Proof: Let $\{i,j,k\}=\{1,2,3\}$. If there does not exist a vertex
 $w\in \tilde{C}$ anticomplete to $\tilde{A}_{\{k\}}$ with $N_{\{i,k\}}(w)\ne \emptyset$ and $N_{\{i,j\}}(w)\cup N_{\{j,k\}}(w)\ne\emptyset$, set $\mathcal{P}_3=\emptyset$ and halt. Otherwise, let $w\in \tilde{C}$ be anticomplete to $\tilde{A}_{\{k\}}$ with $N_{\{i,k\}}(w)\ne \emptyset$ and $N_{\{i,j\}}(w)\cup N_{\{j,k\}}(w)\ne\emptyset$. Since no vertex of $\tilde{C}$ has neighbors in all three of $\tilde{A}_{\{1,2\}}$, $\tilde{A}_{\{1,3\}}$ and $\tilde{A}_{\{2,3\}}$, it follows that either $N_{\{i,j\}}(w)=\emptyset$ or $N_{\{j,k\}}(w)=\emptyset$. Set

\begin{itemize}
\item $L'(w)=\{k\}$,
\item $L'(v)=\{i\}$, for every $v \in N_{\{i,k\}}(w)$, 
\item $L'(v)=\{j\}$, for every $v \in N_{\{i,j\}}(w)\cup N_{\{j,k\}}(w)$, and
\item $L'(v)=L(v)$, for every $v\in V(G)\setminus(N_{\{i,j\}}(w)\cup N_{\{i,k\}}(w)\cup N_{\{j,k\}}(w)\cup \{w\})$.
\end{itemize}

\noindent 
Let $A=\tilde{A}_{\{1\}} \cup \tilde{A}_{\{2\}} \cup \tilde{A}_{\{3\}} \cup N_{\{i,j\}}(w)\cup N_{\{i,k\}}(w)\cup N_{\{j,k\}}(w)\cup \{w\}$. First, update the palettes of the vertices in $\tilde{A}$ with respect to $A$. 
Then, update the palettes of all the vertices in $\tilde{C}$ with respect to $\tilde{A}$. And so after updating, for every $c\in \tilde{C}$ if $N(c)\cap A$ is non-empty, then $|L'(c)|\leq 2$. 
Let $v\in \tilde{C}$ with $|L'(v)|=3$. Then, by the definition of updating,  $N(v)\cap \tilde{A} \subseteq M_{\{i,k\}}(w)\cup M_{\{j,k\}}(w)$. Let $F_{\{i,k\}}$ be  the vertex set of the 2-vertex components of $\tilde{C}$ such that both vertices of the component have a neighbor in $\tilde{A}_{\{i,k\}}$, and let $F_{\{j,k\}}$ be  the vertex set of the 2-vertex components of $\tilde{C}$ such that both vertices of the component have a neighbor in $\tilde{A}_{\{j,k\}}$. Initialize $D^w_{(i,j,k)}=\emptyset$. Consider a vertex $c_1\in \tilde{C}\setminus (F_{\{i,k\}} \cup F_{\{j,k\}})$ with $N(c_1)\cap \tilde{A}\subseteq M_{\{i,k\}}(w)\cup M_{\{j,k\}}(w)$ and $|L'(c_1)|=3$. Recall  that 
in every coloring of $(G \setminus c_1,L', \mathcal{Z})$ at most two colors  appear in $N(c_1) \cap (M_{\{i,k\}}(w)\cup M_{\{j,k\}}(w))$. Therefore,
if $c_1$ is anticomplete to $\tilde{C}\setminus \{c_1\}$, then, by \ref{nbrs}, it follows that $(G\setminus D^w_{(i,j,k)},L'|_{V(G)\setminus D^w_{(i,j,k)}}, \mathcal{Z})$ is colorable if and only if $(G\setminus (D^w_{(i,j,k)}\cup \{c_1\}),L'|_{V(G)\setminus (D^w_{(i,j,k)}\cup \{c_1\})}, \mathcal{Z})$ is colorable. In this case, set $D^w_{(i,j,k)}=D^w_{(i,j,k)}\cup \{c_1\}$. So we may assume $c_1$ is adjacent to some $c_2\in \tilde{C}$. Suppose that  $c_1$ is
anticomplete to at least one of $M_{\{i,k\}}(w)$ and $M_{\{j,k\}}(w)$.
Then in every coloring of $(G \setminus c_1,L', \mathcal{Z})$ at most one color appears in $N(c_1) \cap (M_{\{i,k\}}(w)\cup M_{\{j,k\}}(w))$, and so,
since $N(c_1) \subseteq  \{c_2\} \cup M_{\{i,k\}}(w) \cup M_{\{j,k\}}(w)$,
we deduce that   at most two colors appear in $N(c_1)$. Thus, again by \ref{nbrs}, $(G\setminus D^w_{(i,j,k)},L'|_{V(G)\setminus D^w_{(i,j,k)}}, \mathcal{Z})$ is colorable if and only if $(G\setminus (D^w_{(i,j,k)}\cup \{c_1\}),L'|_{V(G)\setminus (D^w_{(i,j,k)}\cup \{c_1\})}, \mathcal{Z})$ is colorable. In this case, set $D^w_{(i,j,k)}=D^w_{(i,j,k)}\cup \{c_1\}$. Therefore we may assume that $c_1$ has both a neighbor in
$M_{\{i,k\}}(w)$ and a neighbor in  $M_{\{j,k\}}(w)$. Since 
$c_1 \not \in F_{\{i,k\}} \cup F_{\{j,k\}}$, it follows that $c_2$ is anticomplete
to $\tilde{A}_{\{i,k\}} \cup \tilde{A}_{\{j,k\}}$. This implies that every neighbor
of $c_2$ in $\tilde{A}_{\{i,j\}}$ either belongs to $N_{\{i,j\}}(w)$ or is complete to 
$N_{\{i,k\}}(w)$. This implies that $|L'(y)|=1$ for every 
$y \in N(c_2) \setminus \{c_1\}$, and so, by the definition of updating,   $L'(y) \subseteq \{1,2,3\}\setminus 
L(c_2)$ for all $y\in N(c_2)\setminus \{c_1\}$.
If $|L'(c_2)|=1$, set $L'(c_1)=L'(c_1)\setminus L'(c_2)$. Otherwise, $|L'(c_2)|=2$ and so, by \ref{edge}, it follows that $(G\setminus D^w_{(i,j,k)},L'|_{V(G)\setminus D^w_{(i,j,k)}},\mathcal{Z})$ is colorable if and only if $(G\setminus (D^w_{(i,j,k)}\cup \{c_1,c_2\}),L'|_{V(G)\setminus (D^w_{(i,j,k)}\cup \{c_1,c_2\})}, \mathcal{Z})$ is colorable. In this case, set $D^w_{(i,j,k)}=D^w_{(i,j,k)}\cup \{c_1,c_2\}$. 
Carry out this procedure for every $c_1\in \tilde{C}\setminus (F_{\{i,k\}}\cup F_{\{j,k\}})$ with $N(c_1)\cap \tilde{A}\subseteq M_{\{i,k\}}(w)\cup M_{\{j,k\}}(w)$ and $|L'(c_1)|=3$. Let $G^w_{(i,j,k)}=G\setminus D^w_{(i,j,k)}$. Repeatedly applying the previous argument, it follows that $(G,L',\mathcal{Z})$ is colorable if and only if $(G^w_{(i,j,k)},L'|_{V(G^w_{(i,j,k)})},\mathcal{Z})$ is colorable. 
By assumption, there exists $b\in \tilde{B}$ complete to $\tilde{A}_{\{i,k\}}$.
Let $\mathcal{L}'$ be the set of $O(|V(G)|^2)$ subpalettes of $L'|_{V(G^w_{(i,j,k)})}$ obtained from \ref{lemma1} applied with 

\begin{itemize}

\item $x=b$, 
\item $S=\tilde{A}_{\{i,k\}}$,
\item $\hat{A}\cup \hat{B}=F_{\{i,k\}}$, 
\item $Y=V(G^w_{\{i,j,k\}})\setminus (\{b\}\cup \tilde{A}_{\{i,k\}}(w)\cup F_{\{i,k\}})$, and
\item $X=\mathcal{Z}$.

\end{itemize}

\noindent For each $\hat{L}\in\mathcal{L}'$, define the subpalette $L^w_{(i,j,k)}(\hat{L})$ of $L|_{V(G_{(i,j,k)})}$ as follows: For $v\in V(G_{(i,j,k)})$ set

$$L^w_{(i,j,k)}(\hat{L})(v) = \begin{cases}
 \hat{L}(v) & \text{, \hspace{2ex}if $v\in \tilde{C}\setminus D^w_{(i,j,k)}$} \\
 L'(v) & \text{, \hspace{2ex}otherwise} \\
\end{cases}$$


$$\mathcal{P}^w_{(i,j,k)}=\{(G^w_{(i,j,k)},L^w_{(i,j,k)}(\hat{L}), \mathcal{Z}) : \hat{L}\in \mathcal{L}'\}.$$ 

By assumption, there exists $b'\in \tilde{B}$ complete to $\tilde{A}_{\{j,k\}}$. 
For each $\hat{L}\in\mathcal{L}'$, let $\mathcal{L}''(\hat{L})$ be the set of $O(|V(G)|^2)$ subpalettes of $\hat{L}$ obtained from \ref{lemma1} applied with

\begin{itemize}

\item $x=b'$, 
\item $S=A_{\{j,k\}}$,
\item $\hat{A}\cup \hat{B}=F_{\{j,k\}}$, 
\item $Y=V(G^w_{\{i,j,k\}})\setminus (\{b'\}\cup A_{\{j,k\}}(w)\cup F_{\{j,k\}})$, and
\item $X=\mathcal{Z}$.

\end{itemize}

\noindent Finally, for every $\hat{L}\in\mathcal{L}'$ and $L''\in\mathcal{L}''(\hat{L})$, define the subpalette $L^w_{(i,j,k)}(\hat{L},L'')$ of $L|_{V(G_{(i,j,k)})}$ as follows: For every $v\in V(G^w_{(i,j,k)})$ set

$$L^w_{\{i,j,k\}}(\hat{L},L'')(v) = \begin{cases}
L''(v) &\text{, \hspace{2ex}if $v\in \tilde{C}\setminus D^w_{(i,j,k)}$} \\ 
\hat{L}(v) & \text{, \hspace{2ex}otherwise} \\
\end{cases}$$

\noindent Let 

$$\mathcal{P}^w_{(i,j,k)}=\{(G^w_{(i,j,k)}, L^w_{(i,j,k)}(\hat{L},L''), \mathcal{Z}):\hat{L}\in \mathcal{L}',L''\in\mathcal{L}''(\hat{L})\}.$$

Let $\mathcal{P}_3$ be the union of all $\mathcal{P}^w_{(i,j,k)}$. Since there are at most $6n$ choices of $w$ and $\{i,j,k\}$, and $F_{\{i,k\}}$, $F_{\{j,k\}}$ can be found in time $O(|V(G)|^2)$, it follows that building each $\mathcal{P}^w_{(i,j,k)}$ requires $O(|V(G)|^2)$ applications of \ref{lemma1}, and so $\mathcal{P}_3$ can be constructed in time $O(|V(G)|^7)$. Now, we argue that $\mathcal{P}_3$ is indeed a restriction of $(G,L,Z)$. Suppose some $\mathcal{P}^w_{(i,j,k)}$ is colorable. By \ref{lemma1}, it follows that $(G^w_{(i,j,k)},L'|_{V(G^w_{(i,j,k)})}, \mathcal{Z})$ is colorable, and so, as argued above, by \ref{nbrs} and \ref{edge}, it follows that $(G,L',\mathcal{Z})$ is colorable. Since $L'$ is a subpalette of $L$, and $Z \subseteq \mathcal{Z}$, we deduce that $(G,L,Z)$ is colorable, and a coloring of $(G,L,Z)$ can be reconstructed in linear time.


Suppose $c$ is a type II coloring of $(G,L,Z)$. By~(3), $c$ is a coloring of 
$(G,L, \mathcal{Z})$.
By construction and \ref{lemma1}, \textit{(5a)} and \textit{(5b)} hold. Next, we show that \textit{(5c)} holds. Let $w \in \tilde{C}$ have neighbors of two different colors (under $c$)  in $\tilde{A}_{\{1,2\}}\cup \tilde{A}_{\{1,3\}}\cup \tilde{A}_{\{2,3\}}$. By symmetry, we may assume $c(w)=k$ with $N_{\{i,k\}}(w)\ne \emptyset$. Then $w$ is anticomplete to $\tilde{A}_{\{k\}}$. Since $G$ admits a type II coloring and no type I coloring, 
we deduce that $N_{\{i,j\}}(w) \cup N_{\{j,k\}}(w) \ne \emptyset$. We claim that 
$\mathcal{P}^w_{(i,j,k)}$ is colorable. It follows that $c(v)=i$ for every $v \in N_{\{i,k\}}(w)$ and that $c(u)=j$ for every $u \in N_{\{j,k\}}(w)$. 
We claim that  $c(v)=j$ for every $v \in N_{\{i,j\}}(w)$. Suppose not.
Then $N_{\{i,j\}}(w) \neq \emptyset$ and  $N_{\{j,k\}}(w) = \emptyset$ . Since $c$ is a type II coloring, it follows
that there exists $y \in N_{\{i,j\}}(w)$ with $c(y)=j$. But since $(G,L,Z)$ has
no type I coloring, it follows that   $c(u)=j$ for every $u \in N_{\{j,k\}}(w)$.
This proves the claim.

Let $A=\tilde{A}_{\{1\}} \cup \tilde{A}_{\{2\}} \cup \tilde{A}_{\{3\}} \cup N_{\{i,j\}}(w)\cup N_{\{i,k\}}(w)\cup N_{\{j,k\}}(w)\cup \{w\}$. Let $M$ be the 
palette of $G$ obtained from $L$ by first updating the palettes of the vertices in $\tilde{A}$ with respect to $A$, and then updating the palettes of the
vertices of $\tilde{C}$ with respect to $\tilde{A}$. It follows that $c$
is a coloring of $(G,M,\mathcal{Z})$.

 Let $F_{\{i,k\}}$ be  the vertex set of the 2-vertex components of $\tilde{C}$ such that both vertices of the component have a neighbor in $\tilde{A}_{\{i,k\}}$, and let $F_{\{j,k\}}$ be  the vertex set of the 2-vertex components of $\tilde{C}$ such that both vertices of the component have a neighbor in $\tilde{A}_{\{j,k\}}$.
Let $C'$ be the set of vertices 
$v\in \tilde{C} \setminus (F_{\{i,k\}} \cup F_{\{j,k\}})$
 with $N(v)\cap \tilde{A}\subseteq M_{\{i,k\}}(w)\cup M_{\{j,k\}}(w)$. By construction, $c(v)\in L''(v)$ for all $L''\in\mathcal{L}^w_{\{i,j,k\}}$ and $v\in V(G^w_{\{i,j,k\}})\setminus (C' \cup F_{\{i,k\}} \cup F_{\{j,k\}}) $. Next, we consider vertices in $C'\setminus D^w_{(i,j,k)}$. By construction, every vertex $c_1\in C'\setminus D^w_{(i,j,k)}$ has both a neighbor in $M_{\{i,k\}}(w)$, and 
a neighbor in $M_{\{j,k\}}(w)$, is adjacent to some vertex $c_2\in \tilde{C}$,
and $c_2$ has a neighbor in at least one of $\tilde{A}_{\{i,k\}}$ and
$\tilde{A}_{\{j,k\}}$. Moreover, 
$|L^{\tilde{A},\tilde{C}}_c(c_2)|=1$, and so, since $c(c_1)\ne c(c_2)$, it follows that $c(c_1)\in L''(c_1)$ for all $L''\in\mathcal{L}^w_{(i,j,k)}$. 
Now $\mathcal{P}^w_{(i,j,k)}$ is colorable by \ref{lemma1}, and  \textit{(5c)} follows. This proves \textit{(5)}.

\bigskip

\noindent \textit{(6) Assume $(G,L,Z)$ does not have a type I or a type II coloring. Then there exists a restriction $\{(G',L',Z')\}$ of $(G,L,Z)$ such that the following hold:}

\bigskip

\noindent \textit{(6a) $\tilde{A}\cup \tilde{B}\subseteq V(G')$},

\bigskip

\noindent \textit{(6b) $L'(v)=L(v)$ for every $v \in \tilde{A}\cup \tilde{B}$, and $|L'(v)| \leq 2$ for every $v \in V(G')\cap \tilde{C}$, and}

\bigskip
 
\noindent \textit{(6c) If $(G,L,Z)$ is colorable, then $(G',L',Z')$ is colorable.}
\bigskip

\noindent \textit{Moreover, $(G',L',Z')$ can be computed in time $O(|V(G)|^2)$.}

\bigskip

\noindent Proof: Let $C'$ be the set of vertices in $\tilde{C}$ anticomplete to $\tilde{A}_{\{1\}}\cup \tilde{A}_{\{2\}}\cup \tilde{A}_{\{3\}}$. Let $G'=G\setminus C'$, let $W=\{N(v) \cap \tilde{A} : v \in C'\}$, and let $Z'=Z \cup W$. 
Update the palettes of all the vertices in $\tilde{C} \setminus C'$ with respect to $\tilde{A}_{\{1\}}\cup \tilde{A}_{\{2\}}\cup \tilde{A}_{\{3\}}$, and let $L'$ be the palette $L|_{V(G')}$. Clearly, \textit{(6a)} and \textit{(6b)} hold. Suppose
that $(G,L,Z)$ is colorable. Since $(G,L,Z)$ has no type I and no type II coloring, it
follows that every set in $Z'$ is monochromatic in every coloring of $(G,L,Z)$,
and therefore $(G',L',Z')$ is colorable. Thus \textit{(6c)} holds.

Now, we argue that $\{(G',L',Z')\}$ is a restriction of $(G,L,Z)$. Let 
$c$ be a coloring of $(G',L',Z')$; we need to show that $c$ can
be extended to a coloring of $(G,L,Z)$ in time $O(|V(G)|)$.
Let $c_1\in C'$. Then $c_1$ is anticomplete to $\tilde{A}_{\{1\}}\cup \tilde{A}_{\{2\}}\cup \tilde{A}_{\{3\}}$, and $|L(c_1)|=3$.
It follows that in every coloring of $(G,L,Z)$ only one color appears in $N(c_1) \setminus \tilde{C}$. 
Recall also that $c_1$ has at most one neighbor in $C$. Now, repeatedly applying
\ref{nbrs} to vertices of $C'$,  it follows that $c$ can be extended to 
a coloring of $(G,L,Z)$ in time $O(|V(G)|)$. This prove \textit{(6)}.

\bigskip

Let $\mathcal{P}=\mathcal{P}_1\cup \mathcal{P}_2\cup \mathcal{P}_3\cup \{(G',L',Z')\}$ be the union of the sets of restrictions from \textit{(2)},\textit{(4)}, \textit{(5)} and \textit{(6)}. Then $\mathcal{P}$ is a restriction of $(G,L,Z)$.
We claim that $\mathcal{P}$ satisfies the conclusion of the theorem. By \textit{(2)},\textit{(4)}, \textit{(5)} and \textit{(6)}, it follows that $\mathcal{P}$ has size $O(|V(G)|^7)$, and that \textit{(a)} and \textit{(b)} hold. Next, we show that \textit{(c)} holds. Since $\mathcal{P}$ is a restriction  of $(G,L,Z)$, by definition, it follows that if $\mathcal{P}$ is colorable, then $(G,L,Z)$ is colorable. By \textit{(2c)}, if $(G,L,Z)$ has a type I coloring, then $\mathcal{P}_1$ is colorable. So we may assume $(G,L,Z)$ does not have a type I coloring. By \textit{(4c)}, if some vertex of $\tilde{C}$ has neighbors in all three of $\tilde{A}_{\{1,2\}}$, $\tilde{A}_{\{1,3\}}$ and $\tilde{A}_{\{2,3\}}$, then $(G,L,Z)$ is colorable if and only if $\mathcal{P}_2$ is colorable. So we may assume no such vertex exists. By \textit{(5c)}, if $(G,L,Z)$ has a type II coloring, then $\mathcal{P}_3$ is colorable. So we may assume $(G,L,Z)$ does not have a type II coloring. By \textit{(6)}, it follows that $(G,L,Z)$ is colorable if and only if $(G',L',Z')$ is colorable. This proves \ref{lemma2}.

\end{proof}

\section{Cleaning}

In this section, we identify two configurations whose presence in a graph $G$ allows us to delete some vertices and obtain an induced subgraph $G'$ of $G$,
 such that $G'$ is $k$-colorable if and only $G$ is $k$-colorable. Furthermore, these two configurations can be efficiently recognized, and so at the expense of carrying out a polynomial time procedure we may assume a given graph does not contain either configuration.






Let $G$ be a graph. Recall that a vertex $v \in V(G)$ is \textit{dominated} by a vertex $u \in V(G) \setminus \{v\}$ if $u$ is non-adjacent to $v$ and $N(v) \subseteq N(u)$. In terms of coloring, dominated vertices are useful because of the following: 

\begin{lemma}\label{dominated}
Let $\mathcal{F}$ be a set of graphs and $G$ be an $\mathcal{F}$-free graph. If $v\in V(G)$ is dominated by $u\in V(G)\setminus \{v\}$, then $G\setminus v$ is an $\mathcal{F}$-free graph which is $k$-colorable if and only if $G$ is $k$-colorable. Furthermore, we can extend any $k$-coloring of $G\setminus v$ to a $k$-coloring of $G$ in constant time.

\end{lemma}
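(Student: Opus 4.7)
The plan is to verify each of the three claims in the lemma in turn, leaning on the observation that $G\setminus v$ is simply an induced subgraph of $G$.

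First, since $G\setminus v=G[V(G)\setminus\{v\}]$ is an induced subgraph of $G$, every induced subgraph of $G\setminus v$ is also an induced subgraph of $G$. As $G$ is $\mathcal{F}$-free, no member of $\mathcal{F}$ occurs as an induced subgraph of $G$, hence none occurs in $G\setminus v$ either. Thus $G\setminus v$ is $\mathcal{F}$-free.

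Next, I would handle the coloring equivalence. The forward direction is immediate: if $c$ is a $k$-coloring of $G$, then its restriction to $V(G)\setminus\{v\}$ is a $k$-coloring of $G\setminus v$. For the converse, suppose $c$ is a $k$-coloring of $G\setminus v$. Define $c'$ on $V(G)$ by $c'(w)=c(w)$ for $w\ne v$, and $c'(v)=c(u)$. To check $c'$ is a valid $k$-coloring, I only need to verify that $c'(v)\ne c'(w)$ for every neighbor $w$ of $v$. By the domination hypothesis, $N(v)\subseteq N(u)$, so every such $w$ is also a neighbor of $u$; since $c$ is a proper coloring of $G\setminus v$ (and $u\ne v$ with $u$ non-adjacent to $v$, so $u\in V(G)\setminus\{v\}$), we have $c(w)\ne c(u)=c'(v)$. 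So $c'$ is the required $k$-coloring of $G$.

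Finally, the extension procedure in the converse direction is simply: look up $c(u)$ and set $c'(v):=c(u)$. This is one lookup and one assignment, hence runs in constant time. There is no real obstacle here — the only subtlety worth flagging is that the hypothesis ``$u$ is non-adjacent to $v$'' together with ``$N(v)\subseteq N(u)$'' prevents the degenerate case that would force $c(u)\ne c(u)$, and guarantees that the color $c(u)$ is genuinely available at $v$.
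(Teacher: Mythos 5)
Your proposal is correct and follows exactly the paper's argument: $G\setminus v$ is an induced subgraph, restriction gives one direction, and assigning $v$ the color of $u$ (valid since $u\notin N(v)$ and $N(v)\subseteq N(u)$) gives the other, in constant time. No issues.
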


\begin{proof}
Clearly, $G\setminus v$ is $\mathcal{F}$-free, and if $G$ is $k$-colorable, then $G\setminus v$ is $k$-colorable. Now, suppose $c$ is a $k$-coloring of $G\setminus v$. Since $v$ is non-adjacent to $u$ and $N(v)\subseteq N(u)$, it follows, assigning $c(v)=c(u)$, that $c$ extends to a coloring of $G$. This proves \ref{dominated}.
\end{proof}

Let $G$ be a graph. Recall that for a subset $X\subseteq V(G)$, we say that a vertex $v\in V(G)\setminus X$ is \textit{mixed} on $X$, if $v$ is not complete and
not anticomplete to $X$.
We say that a pair of disjoint non-empty stable sets $(A, B)$ form a \textit{non-trivial homogeneous pair of stable sets} if $2<|A|+|B|< |V(G)|$ and no vertex $v \in V(G)\setminus (A\cup B)$ is mixed on $A$ or mixed on $B$. In terms of coloring, non-trivial homogeneous pairs of stable sets are useful because of the following:


\begin{lemma}\label{pair}
Let $\mathcal{F}$ be a set  of graphs and let $G$ be an $\mathcal{F}$-free graph. Let $(A,B)$ be a non-trivial homogeneous pair of stable sets and choose $a\in A$ and $b\in B$, adjacent if possible. Then $G\setminus ((A\cup B) \setminus \{a,b\})$ is an $\mathcal{F}$-free graph which is $k$-colorable if and only if $G$ is $k$-colorable. Furthermore, we can extend any $k$-coloring of $G\setminus ((A\cup B) \setminus \{a,b\})$ to a $k$-coloring of $G$ in linear time.

\end{lemma}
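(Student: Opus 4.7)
The plan is to mirror the clean structure of the proof of Lemma \ref{dominated}: essentially all the work is structural, and once the homogeneity observation is unpacked the rest is bookkeeping. Write $G' := G\setminus ((A\cup B) \setminus \{a,b\})$. Since $G'$ is an induced subgraph of $G$, it is automatically $\mathcal{F}$-free, and any $k$-coloring of $G$ restricts to a $k$-coloring of $G'$. The substance of the lemma is therefore the converse: given a $k$-coloring $c$ of $G'$, I need to produce a $k$-coloring $\tilde c$ of $G$ in linear time.

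The construction is the obvious one: recolor each block monochromatically by setting $\tilde c(v) = c(a)$ for every $v \in A$ and $\tilde c(v) = c(b)$ for every $v \in B$, with $\tilde c$ agreeing with $c$ on $V(G') \setminus \{a,b\}$. I would then verify that $\tilde c$ is a proper $k$-coloring by checking each type of edge of $G$ that could be newly introduced or affected. Edges inside $A$ and inside $B$ do not exist since both sets are stable. For edges from $A$ to the outside $V(G)\setminus(A\cup B)$: by the homogeneous hypothesis any external vertex $v$ is either complete or anticomplete to $A$, and in the complete case $v$ is adjacent to $a$ in $G'$, hence $c(v)\ne c(a)$ and so $\tilde c(v)\ne \tilde c(v')$ for every $v'\in A$; the analogous argument handles $B$.

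The only subtle point is edges between $A$ and $B$, and this is precisely where the convention ``choose $a\in A$ and $b\in B$ adjacent if possible'' is used. If $A$ is anticomplete to $B$ there is nothing to check. Otherwise at least one cross edge exists, so the convention forces the chosen pair $(a,b)$ to be adjacent, which guarantees $c(a)\ne c(b)$ in the given coloring of $G'$. Consequently the two monochromatic blocks that $\tilde c$ assigns to $A$ and $B$ receive distinct colors, and every edge between $A$ and $B$ is properly colored. This is the main (and only real) obstacle in the argument: without the ``adjacent if possible'' rule one could have $c(a)=c(b)$ even when $A$ and $B$ are joined by an edge, which would break the recoloring.

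For the complexity claim, the extension is immediate: we read $c(a)$ and $c(b)$ once and copy each value to the at most $|V(G)|$ deleted vertices of $A$ and $B$ respectively, so the extension runs in $O(|A|+|B|)=O(|V(G)|)$ time, establishing the linear-time bound.
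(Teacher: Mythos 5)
Your proposal is correct and follows essentially the same argument as the paper: recolor $A$ monochromatically with $c(a)$ and $B$ with $c(b)$, using stability, the no-mixing property of external vertices, and the ``adjacent if possible'' choice to guarantee $c(a)\ne c(b)$ when a cross edge exists. The only difference is that you spell out the edge-by-edge verification which the paper leaves implicit.
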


\begin{proof}
Clearly, $G\setminus ((A\cup B) \setminus \{a,b\})$ is $\mathcal{F}$-free, and if $G$ is $k$-colorable, then $G\setminus ((A\cup B) \setminus \{a,b\})$ is $k$-colorable. Now, suppose $c$ is a $k$-coloring of $G\setminus ((A\cup B) \setminus \{a,b\})$. By our choice of $a\in A$ and $b\in B$, if $A$ is not anticomplete to $B$, then $c(a)\ne c(b)$. Since both $A$ and $B$ are stable and no vertex $v \in V(G)\setminus (A\cup B)$ is mixed on $A$ or mixed on $B$, it follows, assigning $c(a')=c(a)$ for all $a'\in A$ and $c(b')=c(b)$ for all $b'\in B$, that $c$ extends to a coloring of $G$. This proves \ref{pair}.
\end{proof}

We say a graph $G$ is \textit{clean} if $G$ has no dominated vertices and no non-trivial homogeneous pair of stable sets. 

\begin{lemma}\label{cleaning} There is an algorithm with the following specifications:

\bigskip

{\bf Input:} A graph $G$.

\bigskip

{\bf Output:} A clean induced subgraph $G'$ of $G$ such that, for
every integer $k$,  $G'$ is $k$-colorable if and only if $G$ is $k$-colorable.

\bigskip

{\bf Running time:} $O(|V(G)|^5)$.

\bigskip

\noindent Furthermore, we can extend any $k$-coloring of $G'$ to a $k$-coloring of $G$ in linear time.



\end{lemma}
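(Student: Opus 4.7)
The plan is to iteratively apply Lemmas \ref{dominated} and \ref{pair}: while $G$ contains either a dominated vertex or a non-trivial homogeneous pair of stable sets, reduce $G$ accordingly and repeat. Each reduction strictly decreases $|V(G)|$, so the loop halts in at most $|V(G)|$ iterations, and by the $k$-colorability equivalences in those lemmas, the resulting subgraph $G'$ is clean and satisfies: $G'$ is $k$-colorable if and only if $G$ is. By recording the sequence of reductions, we can extend any $k$-coloring of $G'$ back through them; each application of \ref{dominated} copies the color of the dominating vertex, while each application of \ref{pair} copies the colors of $a$ and $b$ to the other vertices of $A$ and $B$, and the total extension time telescopes to $O(|V(G)|)$ since each removed vertex is reinstated exactly once.

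Finding a dominated vertex is immediate: scan all ordered pairs $(u,v)$ of distinct non-adjacent vertices and test $N(v) \subseteq N(u)$ in $O(|V(G)|)$ time, giving $O(|V(G)|^3)$ per scan. To detect a non-trivial homogeneous pair of stable sets $(A,B)$, observe that $|A|+|B| \geq 3$ forces, up to swapping the two sides, $|A| \geq 2$, and any two vertices of $A$ are non-adjacent and distinguished by no vertex outside $A \cup B$. I therefore enumerate all unordered pairs $\{a_1,a_2\}$ of non-adjacent vertices as candidate members of $A$, and for each pair run the following closure: initialize $A = \{a_1,a_2\}$, $B = \emptyset$, and maintain for every $v \notin A \cup B$ the counters $d_A(v) = |N(v) \cap A|$ and $d_B(v) = |N(v) \cap B|$. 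While some $v \notin A \cup B$ satisfies $0 < d_A(v) < |A|$, such a $v$ cannot belong to $A$ (it is adjacent to some vertex of $A$), so it must be placed into $B$; this placement succeeds only if $d_B(v) = 0$, otherwise declare failure. Treat vertices mixed on $B$ symmetrically. When no mixed vertex remains, check $A \cup B \neq V(G)$; if so, report $(A,B)$.

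Each addition to $A \cup B$ requires $O(|V(G)|)$ work to refresh the counters, and at most $|V(G)|$ additions occur, so each seed is processed in $O(|V(G)|^2)$ time; summing over the $O(|V(G)|^2)$ seeds, one scan for a homogeneous pair costs $O(|V(G)|^4)$. Combined with the $O(|V(G)|^3)$ scan for dominated vertices and the at most $|V(G)|$ outer iterations, the overall running time is $O(|V(G)|^5)$, as required. The main obstacle is correctness of the closure: one must verify that whenever $(A^*,B^*)$ is a non-trivial homogeneous pair of stable sets with $a_1,a_2 \in A^*$, every vertex placed by the closure into $A$ (respectively $B$) in fact lies in $A^*$ (respectively $B^*$). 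This follows by induction on the order of placement, since a vertex $v$ is placed into $B$ only when it is mixed on the current $A \subseteq A^*$, forcing $v \in B^*$ (as $A^*$ is stable and $v$ has a neighbor in $A^*$, and the homogeneity of $(A^*,B^*)$ rules out $v \notin A^* \cup B^*$). Consequently the closure never fails on a valid instance and returns a pair with $A \cup B \subseteq A^* \cup B^* \neq V(G)$, certifying non-triviality; conversely any successful closure output is by construction a non-trivial homogeneous pair of stable sets, so the algorithm is exact.
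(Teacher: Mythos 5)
Your outer structure is the same as the paper's: iterate the reductions of \ref{dominated} and \ref{pair} at most $|V(G)|$ times, detect a dominated vertex by the obvious $O(|V(G)|^3)$ scan, and extend colorings back through the recorded reductions in linear total time. Where you genuinely diverge is the detection of a non-trivial homogeneous pair of stable sets: the paper simply invokes the $O(|V(G)|^4)$ algorithm of King and Reed \cite{PAIR} (``easily modified''), whereas you give a self-contained seed-and-closure procedure --- enumerate non-adjacent pairs $\{a_1,a_2\}$ as seeds for $A$, absorb vertices mixed on $A$ into $B$ and vertices mixed on $B$ into $A$, and maintain stability via the counters. Your completeness argument (any vertex mixed on the current $A\subseteq A^*$ is mixed on $A^*$, hence lies in $B^*$, so the closure never fails on a good seed) is correct, and the $O(|V(G)|^2)$-per-seed accounting gives the same $O(|V(G)|^4)$ bound, so the overall $O(|V(G)|^5)$ running time matches. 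This buys a proof that does not lean on an external algorithm being ``easily modified,'' at the cost of a page of case-checking the paper avoids.

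There is one small soundness gap in your exactness claim. Your closure only tests $A\cup B\neq V(G)$ before reporting, so it can terminate and report $(\{a_1,a_2\},\emptyset)$ whenever no vertex is mixed on the seed; this output has $B=\emptyset$ and $|A|+|B|=2$, so it is not a non-trivial homogeneous pair of stable sets, and ``reducing accordingly'' via \ref{pair} is then undefined (there is no $b\in B$ to choose). The fix is immediate: also require $B\neq\emptyset$ (which forces $|A|+|B|\geq 3$). This does not hurt completeness, because if the closure from a seed inside a genuine pair ends with $B=\emptyset$, then every vertex outside $\{a_1,a_2\}$ is complete or anticomplete to $\{a_1,a_2\}$ and $a_1,a_2$ are non-adjacent, so $N(a_1)=N(a_2)$ and $a_1$ is dominated by $a_2$; hence at any iteration in which the dominated-vertex scan has failed, every good seed produces $B\neq\emptyset$, and the loop still terminates only on a clean graph. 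With that one-line check (or by exhausting the dominated-vertex reduction before searching for homogeneous pairs) your argument is complete.
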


\begin{proof}

\noindent Since there are $O(|V(G)|^2)$ potential pairs of non-adjacent vertices $u,v\in V(G)$ and we can verify in time $O(|V(G)|)$ if $N(v) \subseteq N(u)$, it follows that in time $O(|V(G)|^3)$ we can find a dominated vertex in $G$, if one exists. In \cite{PAIR}, King and Reed give an algorithm, that runs is time
$O(|V(G)|^4)$, which is easily modified to produce a non-trivial homogeneous pair of stable sets, if one exists. And so, we can find a dominated vertex or a non-trivial homogeneous pair of stable sets in time $O(|V(G)|^4)$, if one exists. Hence, iteratively, \ref{cleaning} follows from the observations made in \ref{dominated} and \ref{pair}.

\end{proof}

Thus, \ref{cleaning} implies that given a graph $G$, we may assume $G$ is clean at the expense of carrying out a time $O(|V(G)|^5)$ procedure. It is also clear from \ref{dominated} and \ref{pair}, that given a graph $G$ we may extend a $k$-coloring of the resulting clean induced subgraph $G'$ of $G$ produced by \ref{cleaning} to a $k$-coloring of $G$ in time $O(|V(G)|)$.

\section{A Useful Lemma}

In this section we prove a general lemma which will be of great use when trying to apply \ref{lemma2} to clean graphs.

\begin{lemma}\label{3edge}

Let $G$ be a clean, connected $\{P_7,C_3\}$-free graph with $V(G)=P\cup Q\cup R\cup S\cup T$ such that:

\begin{itemize}

\item $P\cup T$ is anticomplete to $R\cup S$, 

\item $S$ is anticomplete to $P\cup Q$, 


\item every vertex in $R$ has a neighbor in $Q$, and

\item there exist $q_0 \in Q$ and $p_1,p_2,p_3\in P$ such that 
$p_1-p_2-p_3-q_0$ is an induced path.

\item for every $q\in Q$, there exist $p_2,p_3\in P$ and
$p_1 \in P \cup \{q_0\}$ such that $p_1-p_2-p_3-q$ is an induced path.




\end{itemize}

\noindent Then the following hold:

\begin{enumerate}

\item $S$ is empty

\item If for every $q\in Q$, there exist $p_1,p_2, p_3\in P$ such that 
$p_1-p_2-p_3-q$ is an induced path, then every component of $R$ has size at 
most two.

\item Every component of $R$ is bipartite, and if some component $X$ of
$R$ has more than two vertices, then $q_0$ is complete to one
side of the bipartition of $G[X]$.

\end{enumerate}

\end{lemma}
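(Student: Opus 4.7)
All three conclusions will be proved by contradiction by exhibiting an induced $P_7$ in $G$. The recurring building block is the induced $P_4$ $p_1$--$p_2$--$p_3$--$q$ into $P \cup \{q_0\}$ that the hypothesis supplies for every $q\in Q$ (together with $q_0$'s own induced $P_4$ into $P$, for which $p_1\in P$ automatically since $p_1\ne q_0$ on that path). Cleanness of $G$---no dominated vertex and no non-trivial homogeneous pair of stable sets---will be used to force witness vertices whenever a naive path construction fails: if a neighborhood containment $N(v)\subseteq N(u)$ would preclude a $P_7$, non-domination supplies the missing neighbor that can be plugged back into the construction.

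For (1), if $S\ne\emptyset$ then, since $G$ is connected and $S$ is anticomplete to $P\cup Q\cup T$, one can pick $s\in S$ with an $R$-neighbor $r$. I would split on whether $s$ has (a) an $S$-neighbor $s'$, (b) at least two $R$-neighbors $r,r'$, or (c) the unique $R$-neighbor $r$ as its only neighbor. Case (c) is ruled out immediately since any $Q$-neighbor of $r$ dominates $s$, contradicting cleanness. In (a) and (b) the target is the 7-vertex induced path $z$--$s$--$r$--$q$--$p_3$--$p_2$--$p_1$ with $z\in\{s',r'\}$ and $q\in N(r)\cap Q$: when the $P_4$ for the chosen $q$ begins at $q_0$, I switch to $q=q_0$ (whose $P_4$ is forced to begin in $P$) whenever $r\sim q_0$, and otherwise rely on $r\not\sim q_0$ to ensure the original path is induced. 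Cleanness, in the form of $s$ not being dominated by $q_0$ or by any $q\in Q$, supplies the non-adjacent companion vertex $r'$ required in (b).

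For (2), assume $S=\emptyset$ and suppose some $R$-component has three vertices. By triangle-freeness it contains an induced $P_3$ $r_1$--$r_2$--$r_3$. If some $q\in N(r_1)\cap Q$ satisfies $q\not\sim r_3$, then $p_1$--$p_2$--$p_3$--$q$--$r_1$--$r_2$--$r_3$, with $p_i\in P$ by the Part 2 hypothesis, is an induced $P_7$. Hence $N(r_1)\cap Q\subseteq N(r_3)$ and, symmetrically, $N(r_1)\cap Q = N(r_3)\cap Q$. Cleanness then yields $w\in N(r_1)\cap R\setminus N(r_3)$ with $w\not\sim r_2$, producing a new induced $P_3$ $w$--$r_1$--$r_2$ on which the same dichotomy is reapplied. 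Iterating, and combining the accumulated $Q$-neighborhood equalities with each new $R$-vertex's own $P_4$ into $P$, eventually exhibits an induced $P_7$ in $G$. For (3), any induced cycle of length $\geq 8$ in $R$ already contains $P_7$, so bipartiteness reduces to forbidding induced $C_5$ and $C_7$ in $R$. For an induced $C_5$ $r_1$--$\cdots$--$r_5$--$r_1$, pick $q_5\in N(r_5)\cap Q$; by triangle-freeness on $\{r_2,r_3,q_5\}$, $q_5$ cannot be adjacent to both $r_2$ and $r_3$, so one of $p_1$--$p_2$--$p_3$--$q_5$--$r_5$--$r_4$--$r_3$ or $p_1$--$p_2$--$p_3$--$q_5$--$r_5$--$r_1$--$r_2$ is induced whenever $p_1\in P$. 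When $p_1=q_0$, triangle-freeness applied to consecutive $r_i$'s on the cycle restricts $q_0$'s adjacencies there, and a parallel case analysis---sometimes invoking $q_0$'s own $P_4$ into $P$---still exhibits the $P_7$. The $C_7$ case is handled analogously, by picking a $Q$-neighbor of one cycle vertex and using triangle-freeness to locate three consecutive $r_i$'s to which it is non-adjacent. For the $q_0$-completeness claim, if a component $X$ of size $\geq 3$ with bipartition $(A,B)$ has $q_0$ missing both some $a\in A$ and some $b\in B$, a short alternating walk in $X$ between $a$ and $b$ passing through $N(q_0)$, spliced with $q_0$'s $P_4$ into $P$, yields the $P_7$.

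The main obstacle is the cleanness bookkeeping in (2): once the equality $N(r_1)\cap Q = N(r_3)\cap Q$ is forced, the cleanness-derived vertex $w$ shares $Q$-neighbors with $r_2$ by a symmetric application, which creates chord obstructions to the obvious candidate $P_7$'s through $w$. The successful construction must either iterate through further rounds of cleanness witnesses or recognize the forced configuration as implying an additional combinatorial contradiction in order to produce the $P_7$. The $p_1=q_0$ subcases in (1) and (3) constitute the secondary difficulty, requiring delicate path adjustments to avoid chords at $q_0$.
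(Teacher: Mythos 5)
Your treatment of conclusion 1 and of the bipartiteness half of conclusion 3 is essentially the paper's argument: use the $P_4$'s ending at $Q$-vertices, triangle-freeness, and non-domination of $s$ to manufacture the companion vertex, and in the $p_1=q_0$ subcases first show $q_0$ is anticomplete to the relevant $R\cup S$ vertices (because $q_0$ has an all-$P$ path) and then splice $q_0-p_2-p_3-q$ onto the front. (One small unhandled chord in your case (b): when $z=r'\in R$ and $r\not\sim q_0$, the candidate $z-s-r-q-p_3-p_2-q_0$ can still have the chord $q_0z$; the fix is the paper's device of choosing, among all pairs $(q,r)$ with $r\in N(s)$, one where $q$ has an all-$P$ path if possible, which yields that $q_0$ is anticomplete to all of $N(s)$.)

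The genuine gap is in conclusions 2 and the completeness statement of 3, and it comes from the fact that your plan uses only the ``no dominated vertex'' half of cleanness. The paper's proof of 2 does not iterate domination witnesses at all: having shown (from the all-$P$ paths) that no vertex of $Q$ is mixed on either side $X_1,X_2$ of the bipartition of a component $X$ of $R$, it observes that $(X_1,X_2)$ is then a homogeneous pair of stable sets, and the \emph{no non-trivial homogeneous pair} half of cleanness gives $|X|\le 2$ outright. Your substitute --- forcing $N(r_1)\cap Q=N(r_3)\cap Q$, extracting $w\in N(r_1)\setminus N(r_3)$ by non-domination, and iterating in the hope of an eventual $P_7$ --- is exactly the step you concede you cannot close, and the stable configuration it converges to (every $Q$-vertex homogeneous on each side, the side-neighborhoods within $X$ forming antichains) is precisely a homogeneous pair; $P_7$-freeness plus non-domination alone do not rule it out, so no amount of further iteration of that kind will finish. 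Similarly, for the claim that $q_0$ is complete to one side when $|X|>2$, your sketch presupposes an alternating walk ``passing through $N(q_0)$'', i.e.\ that $q_0$ has a neighbor in $X$ at all. That is the crux, and the paper obtains it from the homogeneous-pair condition: since $(X_1,X_2)$ is not a homogeneous pair, some $q\in Q$ is mixed on a side, a minimal-distance choice gives $x,x'\in X_1$ with a common neighbor $x_2\in X_2$, and then $P_7$-freeness of $q_0-p_2-p_1-q-x-x_2-x'$ forces $q_0$ to have a neighbor in $\{x,x',x_2\}$; only then does the ``$q_0$ is not mixed on either side'' argument (your splicing step) yield completeness to one side. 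Without invoking the homogeneous-pair part of cleanness, your arguments for 2 and 3 cannot be completed as described.
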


\begin{proof}

\noindent \textit{(1) Let $q \in Q$.   There is no   induced path $q-r_1-r_2-r_3$ such that $r_1 \in R $ and $r_2,r_3 \in S$. Moreover, 
if there exist $p_1,p_2,p_3\in P$ such that $p_1-p_2-p_3-q$ is an induced path, then there is no induced path $q-r_1-r_2-r_3$ such that $r_1,r_2,r_3\in R\cup S$.}

\bigskip

\noindent Suppose there exist $p_1,p_2,p_3\in P$ such that $p_1-p_2-p_3-q$ is an induced path, and 
$q-r_1-r_2-r_3$ is an induced path with $r_1,r_2,r_3\in R\cup S$. Then, since $P$ is anticomplete to $R\cup S$, it follows that $p_1-p_2-p_3-q-r_1-r_2-r_3$ is a $P_7$ in $G$, a contradiction. Next suppose that $r_1 \in R$ and 
$r_2,r_3 \in S$. There exist $p_1,p_2 \in P$ such 
that $q-p_1-p_1-q_0$ is an induced path. Since $q_0-r_1-r_2-r_3$ is not an 
induced
path in $G$ by  the previous argument, it follows that $q_0$ is anticomplete to 
$\{r_1,r_2,r_3\}$. But now  $q_0-p_1-p_1-q-r_1-r_2-r_3$ is an induced path in 
$G$,  a contradiction.
This proves \textit{(1)}.

\bigskip 

\noindent \textit{(2) Every vertex in $S$ has a neighbor in $R$}.

\bigskip

\noindent Partition $S=S'\cup S''$, where $S'$ is the set of vertices in $S$ with a neighbor in $R$ and $S''=S\setminus S'$. Suppose $S''$ is non-empty. Since $G$ is connected and $S''$ is anticomplete to $V(G)\setminus S$, it follows that there exists $s''\in S''$ adjacent to $s'\in S'$. By definition, there exists $r\in R$ adjacent to $s'$ and non-adjacent to $s''$. Since every vertex in $R$ has a neighbor in $Q$, there exists $q\in Q$ adjacent to $r$. Now  $q-r-s'-s''$ is an induced path, contrary to \textit{(1)}.  This proves \textit{(2)}.

\bigskip

\noindent \textit{(3) $S$ is stable}.

\bigskip

\noindent Suppose $s,s'\in S$ are adjacent. By \textit{(2)}, there exists $r\in R$ adjacent to $s$. Since $G$ is triangle-free, $s'$ is non-adjacent to $r$. Since every vertex in $R$ has a neighbor in $Q$, there exists $q\in Q$ adjacent to $r$. However, then $q-r-s-s'$ is an induced path, contradicting \textit{(1)}. This proves \textit{(3)}.

\bigskip



\noindent Now, we prove \ref{3edge}.1. Consider a vertex $s\in S$. Since $S$ is anticomplete to $P\cup Q$ and, by \textit{(3)}, $S$ is stable, it follows that $S$ is anticomplete to $V(G)\setminus R$. By \textit{(2)}, there exists $r\in R$ adjacent to $s$. Since every vertex in $R$ has a neighbor in $Q$, there exists $q\in Q$ adjacent to $r$; if possible, choose $q$ and $r$ such that there
exist $p_1,p_2,p_3 \in P$ such that $q-p_1-p_2-p_3$ is an induced path.
Since $s$ is not dominated by $q$, there exists $r'\in N(s)\setminus N(q)$. Since $G$ is triangle-free, $r'$ is non-adjacent to $r$, and it follows that $q-r-s-r'$ is an induced path. It follows from \textit{(1)} there do not exist
 $p_1,p_2,p_3 \in P$ such that $q-p_1-p_2-p_3$ is an induced path, and
in particular $q \neq q_0$. This implies that $q_0$ is anticomplete to $N(s)$,
and that there exist $p_1,p_2 \in P$ such that $q-p_1-p_2-q_0$ is an induced path. But now $q_0-p_2-p_1-q-r-s-r'$ is a $P_7$ in $G$, a contradiction.
This proves \ref{3edge}.1.

\bigskip

\noindent \textit{(4) Every component of $R$ is bipartite}.

\bigskip

\noindent Suppose $X$ is a component of $R$ which is not bipartite. Since $G$ is $\{P_7,C_3\}$-free, it follows that $G[X]$ contains either a $C_5$ or a $C_7$. Let $x_1-x_2-...-x_{2k+1}-x_1$ be either a 5-gon or a 7-gon given by $x_1,...,x_{2k+1}\in R$ with $k\in \{2,3\}$. Let $q \in Q$  be a vertex with a neighbor in $\{x_1, ..., x_{2k+1}\}$. Since $2k+1$ is odd and $G$ is triangle-free,
we may assume that $q$ is adjacent to $x_1$ and non-adjacent to $x_2,x_3$.
But now $q-x_1-x_2-x_3$ is an induced path, and so \textit{(1)} implies that
there do not exist $p_1,p_2,p_3 \in P$ such that $q-p_1-p_2-p_3$ is an induced 
path.  This implies that $q_0$ is anticomplete to $\{x_1, ..., x_{2k+1}\}$,
and that there exist $p_1,p_2 \in P$ such that $q-p_1-p_2-q_0$ is an induced path. But now $q_0-p_2-p_1-q-x_1-x_2-x_3$ is a $P_7$ in $G$, a contradiction.
This proves \textit{(4)}.

\bigskip



\noindent \textit{(5) Let $X$ be a component of $R$, and $(X_1,X_2)$ be a 
bipartition of $G[X]$. Let $q \in Q$ be such that there exist 
$p_1,p_2,p_3 \in P$ where $q-p_1-p_2-p_3$ is an induced 
path. Then $q$ is not mixed on either $X_1$ or $X_2$}.

\bigskip 

\noindent Suppose there exists a vertex $q\in Q$ adjacent to $x$ and non-adjacent to $x'$, where $x,x'\in X_1$. Since $X$ is connected, by choosing $x$ and $x'$ at minimum distance from each other in $G[X]$, we may assume that there
exists $x_2 \in X_2$ adjacent to both $x$ and $x'$. Since $G$ is triangle-free,
it follows that $q$ is non-adjacent to $x_2$.
Now $q-x-x_2-x'$ is an induced path, and so  \textit{(1)} implies that
there do not exist $p_1,p_2,p_3 \in P$ such that $q-p_1-p_2-p_3$ is an induced 
path. This proves~\textit{(5)}.

\bigskip

Let $X$ be a component of $R$, and $(X_1,X_2)$ be a bipartition of $G[X]$.
First we prove  \ref{3edge}.2.
Suppose that for every vertex of $Q$ there exist $p_1,p_2,p_3 \in P$ such 
that $q-p_1-p_2-p_3$ is an induced  path. By \textit{(5)}, no vertex of $Q$ is 
mixed on $X_1$, and similarly, no vertex of
$Q$ is mixed on  $X_2$. Since $P\cup T$ is anticomplete to $R$, it follows that $V(G)\setminus Q$ is anticomplete to $R$, and in particular to $X$. Hence, $(X_1,X_2)$ is a homogeneous pair of stable sets, and so, since $G$ is clean, it follows that $|X|\leq 2$. This proves \ref{3edge}.2.

Finally, we prove \ref{3edge}.3. Suppose that $|X|>2$.  Since $(X_1,X_2)$ is 
not a homogeneous pair of stable sets, it follows that there exists a vertex $q\in Q$ adjacent to $x$ and non-adjacent to $x'$, where $x,x'\in X_1$, say.
Since $X$ is connected, by choosing $x$ and $x'$ at minimum distance from each other in $G[X]$, we may assume that there
exists $x_2 \in X_2$ adjacent to both $x$ and $x'$. Since $G$ is triangle-free,
it follows that $q$ is non-adjacent to $x_2$. By \textit(5), $q \neq q_0$,
and there exist $p_1,p_2 \in P$ such that $q-p_1-p_2-q_0$ is a path.
Since $q_0-p_2-p_1-q-x-x_2-x'$ is not a $P_7$ in $G$, it follows that
$q_0$ has a neighbor in $\{x,x',x_2\}$, and \ref{3edge}.3 follows from
\textit(5).
\end{proof}

\section{7-gons}

In this section we show that if a $\{P_7,C_3\}$-free graph contains a 7-gon, then in polynomial time we can decide if the graph is 3-colorable, and give a coloring if one exists. Let $C$ be an $n$-gon in a graph $G$. For a vertex $v\in V(G)\setminus V(C)$, we call the neighbors of $v$ in $V(C)$ the \textit{anchors of $v$ in $C$}.

We begin with some definitions. Let $C$ be a 7-gon in a graph $G$ given by $v_0-v_1-v_2-v_3-v_4-v_5-v_6-v_0$. We say that a vertex $v\in V(G)\setminus V(C)$ is:

\begin{itemize}

\item a \textit{clone at $i$}, if $N(v)\cap V(C)=\{v_{i-1}, v_{i+1}\}$ for some $i\in\{0,1,...,6\}$, where all indices are mod 7, 

\item a \textit{propeller at $\{i,i+3\}$}, if $N(v)\cap V(C)=\{v_{i}, v_{i+3}\}$ for some $i\in\{0,1,...,6\}$, where all indices are mod 7,


\item a \textit{star at $i$}, if $N(v)\cap V(C)=\{v_{i-2},v_i, v_{i+2}\}$for some $i\in\{0,1,...,6\}$, where all indices are mod 7.

\end{itemize}

\noindent The following shows how we can partition the vertices of $G$ based on their anchors in $C$.

\begin{lemma}\label{C7}

Let $G$ be a $\{P_7,C_3\}$-free graph, and suppose $C$ is a 7-gon in $G$ given by $v_0-v_1-v_2-v_3-v_4-v_5-v_6-v_0$. If $v\in V(G)\setminus V(C)$, then for some $i\in\{0,1,...,6\}$ either:

\begin{enumerate}

\item $v$ is a clone at $i$,

\item $v$ is a propeller at $\{i,i+3\}$,

\item $v$ is a star at $i$, or

\item $v$ is anticomplete to $V(C)$.
\end{enumerate}
\end{lemma}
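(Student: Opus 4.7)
\textbf{Proof proposal for \ref{C7}.} The plan is to classify $v$ by the set $N(v) \cap V(C)$, which, because $G$ is triangle-free, must be an independent set of the 7-cycle $C$. Since the independence number of $C_7$ is $3$, this set has size $0$, $1$, $2$, or $3$, and I will handle each case.

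If $|N(v) \cap V(C)| = 0$, then $v$ is anticomplete to $V(C)$ and outcome~4 holds. If $|N(v) \cap V(C)| = 1$, say $N(v) \cap V(C) = \{v_i\}$, then $v-v_i-v_{i+1}-v_{i+2}-v_{i+3}-v_{i+4}-v_{i+5}$ (indices mod $7$) is an induced $P_7$ in $G$ --- the cycle is induced so the only possible chords would involve $v$, but by assumption $v$ has no other neighbor on $C$ --- contradicting that $G$ is $P_7$-free. So vertices with a unique anchor do not exist.

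If $|N(v) \cap V(C)| = 2$, the two anchors are at cyclic distance $2$ or $3$ (distance $1$ would create a triangle). Distance $2$ gives $N(v) \cap V(C) = \{v_{i-1}, v_{i+1}\}$ for some $i$, making $v$ a clone at $i$ (outcome~1). Distance $3$ gives $N(v) \cap V(C) = \{v_i, v_{i+3}\}$ for some $i$, making $v$ a propeller at $\{i,i+3\}$ (outcome~2).

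The only case left is $|N(v) \cap V(C)| = 3$. The main step is to show that every independent triple in $C_7$ is of the form $\{v_{i-2}, v_i, v_{i+2}\}$. By rotational symmetry it suffices to enumerate the independent triples containing $v_0$: since the remaining two vertices lie in $\{v_2, v_3, v_4, v_5\}$ and must be mutually non-adjacent, the triples are $\{v_0,v_2,v_4\}$, $\{v_0,v_2,v_5\}$, and $\{v_0,v_3,v_5\}$. Each is a star --- the first is a star at $2$, the second a star at $0$ (since $v_{-2}=v_5$), and the third a star at $5$ (since $v_{5+2}=v_0$). By the $7$-fold symmetry of $C$, every independent triple in $C_7$ is a star, so outcome~3 holds. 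The anchor set cannot have size four or more, since that would exceed the independence number of $C_7$. This exhausts all cases and proves the lemma; the only mild obstacle is the bookkeeping to show each distance-$3$ triple is indeed a star, which the enumeration above handles directly.
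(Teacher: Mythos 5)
Your proof is correct and takes essentially the same route as the paper: triangle-freeness makes the anchor set an independent set of $C_7$, $P_7$-freeness rules out a single anchor, and the independent sets of size $2$ or $3$ are exactly the clone, propeller, and star configurations. The paper merely organizes the case analysis differently (fixing an anchor $v_0$ with $v_6,v_1,v_2$ non-anchors and letting the forbidden path $v-v_0-v_1-v_2-v_3-v_4-v_5$ force a neighbor in $\{v_3,v_5\}$) instead of your enumeration by anchor-set size, but the content is the same.
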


\begin{proof}
Consider a vertex $v\in V(G)\setminus V(C)$. If $v$ is anticomplete to $V(C)$, then \ref{C7}.4 holds. Thus, we may assume $N(v)\cap V(C)\ne \emptyset$. Since
$G$ is triangle-free, and $7$ is odd, we may assume that $v$ is adjacent to 
$v_0$, and anticomplete to $\{v_6,v_1,v_2\}$.  If $v$ is adjacent to $v_4$, 
then, 
since $G$ is triangle-free, \ref{C7}.2 holds, so we may assume not. Since
$v-v_0-v_1-v_2-v_3-v_4-v_5$ is not a $P_7$ in $G$, it follows that $v$ has 
a neighbor in $\{v_3,v_5\}$. If $v$ is complete to $\{v_3,v_5\}$, then 
\ref{C7}.3 holds; if $v$ is adjacent to $v_3$ and not to $v_5$, then 
\ref{C7}.2 holds; and if $v$ is adjacent to $v_6$ and not to $v_3$, then
\ref{C7}.1 holds. This proves~\ref{C7}.
\end{proof}


Let $G$ be a triangle-free graph, and let $C$ be a 7-gon in $G$ given by $v_0-v_1-v_2-v_3-v_4-v_5-v_6-v_0$. 
Using \ref{C7} we partition $V(G)\setminus V(C)$ as follows:

\begin{itemize}

\item Let $CL^C(i)$ be the set of clones at $i$, and define $CL^C=\bigcup\limits_{i=0}^6CL^C(i)$.

\item $P^C(i)$ be the set of propellers at $\{i,i+3\}$, and define $P^C=\bigcup\limits_{i=0}^6P^C(i)$.



\item Let $S^C(i)$ be the set of stars at $i$ and define $S^C=\bigcup\limits_{i=0}^6S^C(i)$.

\item Let $A^C$ be the set of vertices anticomplete to $V(C)$.

\end{itemize}

\bigskip

\noindent By \ref{C7}, it follows that $V(G)=V(C)\cup CL^C \cup P^C\cup S^C \cup A^C$. Furthermore, we partition $A^C=X^C\cup Y^C\cup Z^C$, where

\begin{itemize}

\item $X^C$ is the set of vertices in $A^C$ with a neighbor in $P^C$,

\item $Y^C$ is the set of vertices in $A^C\setminus X^C$ with a neighbor in $S^C$, and

\item $Z^C=A^C\setminus (X^C\cup Y^C)$.

\end{itemize}

\bigskip

\noindent And so, for a given 7-gon $C$ in time $O(|V(G)|^2)$ we obtain the partition $V(C)\cup CL^C \cup P^C\cup S^C \cup X^C\cup Y^C \cup Z^C$ of $V(G)$. Now, we establish several properties of this partition.

\begin{lemma}\label{7-1}

If $G$ is a $\{P_7,C_3\}$-free graph, then $A^C$ is anticomplete to $CL^C$ for every 7-gon $C$ in $G$.

\end{lemma}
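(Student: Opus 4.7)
The plan is to argue by contradiction. Suppose some $a \in A^C$ is adjacent to some $c \in CL^C$. Then $c$ is a clone at some $i \in \{0,\ldots,6\}$, and by the rotational symmetry of the labeling of $C$, I may relabel and assume $i = 0$, so that $N(c) \cap V(C) = \{v_6, v_1\}$ and $c$ is non-adjacent to each of $v_0, v_2, v_3, v_4, v_5$.

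The key step is to exhibit an explicit induced $P_7$ in $G$, namely $a - c - v_1 - v_2 - v_3 - v_4 - v_5$. The consecutive pairs are adjacent (edges $ac$ and $cv_1$ by hypothesis, and $v_1 v_2, v_2 v_3, v_3 v_4, v_4 v_5$ as edges of $C$). For inducedness I need to verify the non-edges: $a$ is non-adjacent to each of $v_1,\ldots,v_5$ because $a \in A^C$; $c$ is non-adjacent to each of $v_2, v_3, v_4, v_5$ by the clone assumption; and the non-adjacencies among $v_1, v_2, v_3, v_4, v_5$ follow immediately from the definition of the 7-gon (vertices $v_i$ and $v_j$ with $|i-j|\geq 2$ and $|i-j|\leq 5$ are non-adjacent). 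This gives a $P_7$ in $G$, contradicting $P_7$-freeness.

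There is no real obstacle here; the only subtle point is remembering that the rotational symmetry of the 7-cycle lets me reduce to a single case $i = 0$, so that the chosen $P_7$ is available for every possible clone position. The statement then follows.
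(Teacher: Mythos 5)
Your proof is correct and is essentially the paper's own argument: after reducing by symmetry to a clone at $0$, both exhibit the induced path $a-c-v_1-v_2-v_3-v_4-v_5$ (the paper writes $u-v-v_1-v_2-v_3-v_4-v_5$) to contradict $P_7$-freeness. Nothing further is needed.
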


\begin{proof} Let $C$ be a 7-gon in $G$ given by $v_0-v_1-v_2-v_3-v_4-v_5-v_6-v_0$. Suppose there exists a vertex $u\in A^C$ adjacent to $v\in CL^C$. By symmetry, we may assume $v\in CL^C(0)$. However, then $u-v-v_1-v_2-v_3-v_4-v_5$ is a $P_7$ in $G$, a contradiction. This proves \ref{7-1}.
\end{proof}

\noindent \ref{7-1} implies  that:

\begin{lemma}\label{7-2} If $G$ is a $\{P_7,C_3\}$-free graph, then for every 7-gon $C$ in $G$ the following hold:


\begin{enumerate}

\item $A^C$ is anticomplete to $V(C) \cup CL^C$.

\item Every vertex in $X^C$ has a neighbor in $P^C$ (and possibly $S^C$).

\item $Y^C\cup Z^C$ is anticomplete to $P^C$.

\item Every vertex in $Y^C$ has a neighbor in $S^C$.

\item $Z^C$ is anticomplete to $S^C$.

\end{enumerate}

\end{lemma}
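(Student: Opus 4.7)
The plan is to observe that \ref{7-2} is essentially a bookkeeping consequence of the definitions of the partition $V(C)\cup CL^C\cup P^C\cup S^C\cup X^C\cup Y^C\cup Z^C$, together with the single nontrivial fact recorded in \ref{7-1}. No new structural argument is needed; the only slight care is in separating \ref{7-2}.3 and \ref{7-2}.5, which both come from the filtration by which $X^C$, $Y^C$, $Z^C$ were defined.

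First I would handle \ref{7-2}.1. By the definition of $A^C$, every vertex of $A^C$ is anticomplete to $V(C)$; combined with \ref{7-1}, which says $A^C$ is anticomplete to $CL^C$, this gives that $A^C$ is anticomplete to $V(C)\cup CL^C$. Next, \ref{7-2}.2 and \ref{7-2}.4 are immediate from the definitions: $X^C$ was defined as those $v\in A^C$ having a neighbor in $P^C$, and $Y^C$ as those $v\in A^C\setminus X^C$ having a neighbor in $S^C$.

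For \ref{7-2}.3, note that by definition $Y^C\cup Z^C = A^C\setminus X^C$; since $X^C$ consists of \emph{all} vertices of $A^C$ having a neighbor in $P^C$, every vertex of $A^C\setminus X^C$ is anticomplete to $P^C$. Finally, for \ref{7-2}.5, by definition $Z^C=A^C\setminus (X^C\cup Y^C)$; in particular $Z^C\subseteq A^C\setminus X^C$, and since $Y^C$ collects all vertices of $A^C\setminus X^C$ with a neighbor in $S^C$, any vertex of $(A^C\setminus X^C)\setminus Y^C = Z^C$ must be anticomplete to $S^C$.

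Since there is no combinatorial obstacle beyond that already handled by \ref{7-1}, the only potential pitfall is notational: I will be careful to cite \ref{7-1} explicitly when proving \ref{7-2}.1, and to state \ref{7-2}.3 before \ref{7-2}.5 so that the inclusion $Z^C\subseteq A^C\setminus X^C$ used in the last step is transparent from the filtered construction of the partition.
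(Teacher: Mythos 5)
Your proof is correct and matches the paper's (implicit) argument: the paper states \ref{7-2} with no proof beyond the remark that it follows from \ref{7-1}, since items 2--5 are immediate from the definitions of $X^C$, $Y^C$, $Z^C$ as a filtration of $A^C$, and item 1 combines the definition of $A^C$ with \ref{7-1}. Nothing is missing.
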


\noindent For a fixed subset $X$ of $V(G)$, we say a vertex \textit{$v\in V(G)\setminus X$ is mixed on an edge of $X$}, if there exist adjacent $x,y\in X$ such that $v$ is mixed on $\{x,y\}$. We need the following two facts:

\begin{lemma}\label{7AC}

If $G$ is a $\{P_7,C_3\}$-free graph, then for every 7-gon $C$ in $G$ the following hold:

\begin{enumerate}

\item No vertex in $P^C$ is mixed on an edge of $A^C$.

\item $X^C$ is stable and anticomplete to $Y^C\cup Z^C$.

\end{enumerate}
\end{lemma}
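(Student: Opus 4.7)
\medskip

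\noindent \textbf{Proof plan.} My plan is to prove the two statements in order, with the first being the main structural step and the second following by a short chase.

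For part 1, suppose for a contradiction that some $p \in P^C$ is mixed on an edge of $A^C$, so there exist adjacent $a,b \in A^C$ with $p$ adjacent to $a$ and non-adjacent to $b$. By symmetry we may take $p \in P^C(0)$, so that $N(p) \cap V(C) = \{v_0, v_3\}$. Then I would verify directly that
\[
v_4 - v_5 - v_6 - v_0 - p - a - b
\]
is an induced seven-vertex path in $G$: the listed edges are present (three from the cycle $C$, plus $v_0p$, $pa$, $ab$); and all other potential edges among these seven vertices are absent because $\{a,b\} \subseteq A^C$ is anticomplete to $V(C)$ by \ref{7-2}.1, $p$ has no neighbor in $V(C) \setminus \{v_0, v_3\}$, and $pb$ is a non-edge by hypothesis. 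This contradicts $G$ being $P_7$-free and proves~\ref{7AC}.1.

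For the stability claim in part 2, suppose $x_1, x_2 \in X^C$ are adjacent. By definition of $X^C$, there exists $p \in P^C$ adjacent to $x_1$. Since $x_1, x_2 \in A^C$ are adjacent, part 1 (already established) implies that $p$ cannot be mixed on the edge $x_1 x_2$, so $p$ is adjacent to $x_2$ as well. But then $\{p, x_1, x_2\}$ forms a triangle in $G$, contradicting that $G$ is $C_3$-free. Therefore $X^C$ is stable.

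For the anticompleteness claim, suppose for a contradiction that $x \in X^C$ is adjacent to $w \in Y^C \cup Z^C$. Choose $p \in P^C$ with $p$ adjacent to $x$; such a $p$ exists by definition of $X^C$. Since $x, w \in A^C$ are adjacent, part 1 applied to the edge $xw$ forces $p$ to be adjacent to $w$ as well. But $w \in Y^C \cup Z^C \subseteq A^C \setminus X^C$ has, by definition of $X^C$, no neighbor in $P^C$, a contradiction. Thus $X^C$ is anticomplete to $Y^C \cup Z^C$, completing the proof. The main (and only) obstacle is locating the correct induced $P_7$ in part 1; everything after that is a short application of that structural fact together with triangle-freeness and the definition of the partition of $A^C$.
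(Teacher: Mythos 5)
Your proposal is correct and follows essentially the same route as the paper: part 1 is proved by exhibiting an induced $P_7$ consisting of the edge of $A^C$, the propeller, and a segment of the 7-gon (the paper uses $a-a'-p-v_3-v_4-v_5-v_6$, you use the symmetric path $v_4-v_5-v_6-v_0-p-a-b$, both valid), and part 2 then follows from part 1 together with triangle-freeness and the definitions of $X^C$, $Y^C$, $Z^C$, exactly as in the paper.
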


\begin{proof}

Let $C$ be a 7-gon in $G$ given by $v_0-v_1-v_2-v_3-v_4-v_5-v_6-v_0$. Suppose for adjacent $a,a'\in A^C$, there exists $p\in P^C$ which is adjacent to $a'$ and non-adjacent to $a$. By symmetry, we may assume $p\in P^C(0)$. However, then $a-a'-p-v_3-v_4-v_5-v_6$ is a $P_7$ in $G$, a contradiction. This proves \ref{7AC}.1.

Consider a vertex $x\in X^C$. By \ref{7-2}.2, $x$ has a neighbor $p\in P^C$. If there exists $x'\in N(x)\cap A^C$, then, since $G$ is triangle-free,
it follows that  $p$ is non-adjacent to $x'$, and so $p$ is mixed on an edge of $A^C$, contradicting \ref{7AC}.1. This proves \ref{7AC}.2.

\end{proof}

\begin{lemma}\label{7comps}

Let $G$ be a clean, connected $\{P_7,C_3\}$-free graph. Then for every 7-gon $C$ in $G$ the following hold:

\begin{enumerate}

\item $Z^C$ is empty.

\item Every component of $Y^C$ is a singleton or an edge.

\end{enumerate}
\end{lemma}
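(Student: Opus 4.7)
The plan is to deduce both statements of the lemma from a single invocation of \ref{3edge} applied to the partition of $V(G)$ furnished by \ref{C7}. Concretely, I will set $P = V(C)$, $Q = S^C$, $R = Y^C$, $S = Z^C$, and $T = CL^C \cup P^C \cup X^C$. The four anticompleteness hypotheses of \ref{3edge} are then immediate consequences of the preparatory lemmas of this section: $P \cup T$ is anticomplete to $R \cup S = Y^C \cup Z^C$ by \ref{7-2}.1, \ref{7-2}.3 and \ref{7AC}.2; $S = Z^C$ is anticomplete to $P \cup Q = V(C) \cup S^C$ by \ref{7-2}.1 and \ref{7-2}.5; and every vertex of $R = Y^C$ has a neighbor in $Q = S^C$ by \ref{7-2}.4.

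The decisive ingredient is the existence of the required induced paths $p_1-p_2-p_3-q$ with $p_1,p_2,p_3 \in P$ for every $q \in Q$. I verify these directly from the cyclic geometry of $C$: for any star $q \in S^C(i)$, whose neighbors on $V(C)$ are exactly $\{v_{i-2},v_i,v_{i+2}\}$, the four vertices $v_{i+4}, v_{i+3}, v_{i+2}, q$ form an induced path, since $v_{i+2}-v_{i+3}-v_{i+4}$ is a subpath of $C$ and the indices $i+3, i+4$ taken mod $7$ avoid $\{i-2,i,i+2\}$. Choosing any vertex of $S^C$ as $q_0$ then supplies both the base path and the stronger form of the path hypothesis needed for conclusion 2 of \ref{3edge}. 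Applying \ref{3edge}.1 gives $Z^C = S = \emptyset$, settling part 1, while \ref{3edge}.2 gives that every component of $Y^C = R$ has at most two vertices; since a connected graph on at most two vertices is either a singleton or an edge, part 2 follows.

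Only one case lies outside this framework: $S^C = \emptyset$, where $Q$ is empty and \ref{3edge} cannot be applied. I will handle this case directly. If $S^C = \emptyset$, then $Y^C = \emptyset$ by its definition, and the vertices of $Z^C$, combining all the anticompleteness relations already collected from \ref{7-2} and \ref{7AC}.2, have no neighbors outside $Y^C \cup Z^C = Z^C$. Since $G$ is connected and $V(C)$ is non-empty, this forces $Z^C = \emptyset$, while part 2 is vacuous.

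I do not anticipate a serious obstacle; most of the substance is absorbed by \ref{3edge}. The only delicate point is the short index computation modulo $7$ that exhibits the induced $P_4$ from each star into $V(C)$, and the observation that the cleanness hypothesis on $G$ (which powers \ref{3edge} via its use of the dominated-vertex and homogeneous-pair conditions) is available by assumption.
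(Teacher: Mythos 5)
Your proof is correct and follows essentially the same route as the paper: the same application of \ref{3edge} with $P=V(C)$, $Q=S^C$, $R=Y^C$, $S=Z^C$, $T=CL^C\cup P^C\cup X^C$, using \ref{7-2} and \ref{7AC}.2 for the anticompleteness hypotheses and the induced path $v_{i+4}-v_{i+3}-v_{i+2}-s$ from each star. Your explicit treatment of the degenerate case $S^C=\emptyset$ (where $Y^C=\emptyset$ and connectivity forces $Z^C=\emptyset$) is a minor extra care the paper leaves implicit, not a change of approach.
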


\begin{proof}

Let $C$ be a 7-gon in $G$ given by $v_0-v_1-v_2-v_3-v_4-v_5-v_6-v_0$. 

\bigskip

\noindent \textit{(1) For every $s\in S^C$, there exists $t_1,t_2,t_3\in V(C)$ such that $t_1-t_2-t_3-s$ is an induced path.}

\bigskip

 \noindent Consider a vertex $s\in S^C$. By symmetry, we may assume $s\in S^C(0)$. And so, $v_4-v_3-v_2-s$ is the desired induced path. This proves \textit{(1)}.
 
\bigskip





\noindent By \ref{7-2}, \ref{7AC}.2 and \textit{(1)}, we may apply \ref{3edge} letting $P=V(C)$, $Q=S^C$, $R=Y^C$, $S=Z^C$, and $T=CL^C\cup P^C\cup X^C$. Then \ref{3edge}.1 and \ref{3edge}.2 follow immediately from \ref{3edge}. This proves \ref{7comps}.


\end{proof}

\noindent Now, we prove the main result of the section.

\begin{lemma}\label{7gon} There is an algorithm with the following specifications:

\bigskip

{\bf Input:} A clean, connected $\{P_7,C_3\}$-free graph $G$ which contains a 7-gon.

\bigskip

{\bf Output:} A $3$-coloring of $G$, or a determination that none exists.

\bigskip

{\bf Running time:} $O(|V(G)|^{10})$.

\end{lemma}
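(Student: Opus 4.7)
The plan is to search $G$ for a 7-gon $C$ in $O(|V(G)|^7)$ time by enumeration, compute the partition
$V(G)=V(C)\cup CL^C\cup P^C\cup S^C\cup X^C\cup Y^C\cup Z^C$
from Lemma \ref{C7}, and observe via Lemmas \ref{7AC} and \ref{7comps} that $Z^C=\emptyset$ and every component of $A^C=X^C\cup Y^C$ has at most two vertices. We will then iterate over the $3^7=O(1)$ assignments of colors to $V(C)$ and, for each one, attempt to extend it to a 3-coloring of the whole graph; if any attempt succeeds we return the resulting coloring, and otherwise we report that $G$ is not 3-colorable.

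Fix a 3-coloring $c$ of $V(C)$. Initialize the order-3 palette $L$ of $G$ with $L(v)=\{c(v)\}$ for $v\in V(C)$ and $L(v)=\{1,2,3\}$ elsewhere, update the palettes of all vertices outside $V(C)$ with respect to $V(C)$, and discard the current $c$ if any palette becomes empty. By Lemma \ref{7-2}.1, $A^C$ is anticomplete to $V(C)\cup CL^C$, so palettes on $A^C$ remain $\{1,2,3\}$, while every vertex of $CL^C\cup P^C\cup S^C$ has at least two anchors in $V(C)$ and so ends with a palette of size at most two. This puts us in position to apply Lemma \ref{lemma2} with $\tilde{C}=A^C$, $\tilde{A}$ equal to the set of vertices of $P^C\cup S^C$ having at least one neighbor in $A^C$, and $\tilde{B}$ equal to all remaining vertices. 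The required anticompleteness of $\tilde{B}$ to $\tilde{C}$, the two-vertex-component condition on $\tilde{C}$, the mutual-neighbor conditions between $\tilde{A}$ and $\tilde{C}$, and the palette-size requirements follow from Lemmas \ref{7-2}, \ref{7AC}, and \ref{7comps}, while the absence of dominated vertices holds because $G$ is clean.

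The hard part will be to verify the two remaining ``assume also'' hypotheses of Lemma \ref{lemma2}: the compatibility statement that $N_{\{i,j\}}(c_1)\cap M_{\{i,j\}}(c_2)$ is complete to $M_{\{i,k\}}(c_1)\cap N_{\{i,k\}}(c_2)$ for all $c_1,c_2\in\tilde{C}$ and $\{i,j,k\}=\{1,2,3\}$, and the existence, for each pair $\{i,j\}$, of a $\tilde{B}$-vertex complete to $\tilde{A}_{\{i,j\}}$. I expect to establish both by exploiting the fact that every vertex of $\tilde{A}_{\{i,j\}}$ is a propeller or star whose $V(C)$-anchors are forced by $c$ to all carry the single third color $\ell\in\{1,2,3\}\setminus\{i,j\}$, together with Lemma \ref{7AC}.1 (propellers are not mixed on edges of $A^C$) and the $\{P_7,C_3\}$-freeness of $G$; where no uniform $\tilde{B}$-anchor exists I will refine the enumeration by additionally branching over the $O(1)$ propeller and star positions involved before invoking Lemma \ref{lemma2}. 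Once the preconditions hold, Lemma \ref{lemma2} returns in $O(|V(G)|^7)$ time a restriction of size $O(|V(G)|^7)$ whose triples have palettes of size at most two and monochromatic-set lists of size $O(|V(G)|)$, and Lemma \ref{checkSubsets} decides each triple in $O(|V(G)|^3)$ time. Summing over the $O(1)$ colorings of $V(C)$ and absorbing the $O(|V(G)|^7)$ cost of finding a 7-gon yields the claimed total running time of $O(|V(G)|^{10})$.
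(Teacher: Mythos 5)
Your setup (finding the 7-gon, the partition via \ref{C7}, $Z^C=\emptyset$ and two-vertex components of $X^C\cup Y^C$ from \ref{7comps}, enumerating the $O(1)$ colorings of $V(C)$, updating palettes, and then feeding everything to \ref{lemma2} and \ref{checkSubsets}) matches the paper, and your accounting of the running time is fine. But the two hypotheses you yourself flag as ``the hard part'' --- the compatibility condition that $N_{\{i,j\}}(c_1)\cap M_{\{i,j\}}(c_2)$ is complete to $M_{\{i,k\}}(c_1)\cap N_{\{i,k\}}(c_2)$, and the existence, for each pair $\{i,j\}$, of a vertex of $\tilde{B}$ complete to $\tilde{A}_{\{i,j\}}$ --- are exactly the mathematical content of the paper's proof of \ref{7gon} (its claims (1) and (2)); everything else is machinery. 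You do not prove either one: you say you ``expect to establish both'' from the anchor structure, \ref{7AC}.1 and $P_7$-freeness, which is a statement of intent, not an argument. The paper's proof of the compatibility claim is a genuine case analysis over stars and propellers that, for each non-adjacent pair $x\in N(u)\cap A'_{\{i,j\}}$, $y\in A'_{\{i,k\}}\setminus N(u)$, exhibits an explicit induced six-vertex path inside $V(C)\cup CL^C\cup\{y\}$ starting at $x$, which together with \ref{7-2}.1 (the key fact that $u$ is anticomplete to $V(C)\cup CL^C$) yields a $P_7$; \ref{7AC}.1 is not the lever here, and none of these paths appear in your proposal.

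The second hypothesis is worse off. The paper proves it always holds: letting $K$ be the set of $V(C)$-vertices with a neighbor in $A'_{\{i,j\}}$, all of $K$ is colored $k$, so $2\le|K|\le 3$ since $C_7$ has no stable set of size four; if $|K|=2$ then $A'_{\{i,j\}}\subseteq P^C$ and $K$ is complete to it, and if $|K|=3$ then up to symmetry $K=\{v_0,v_3,v_5\}$, triangle-freeness forces $A'_{\{i,j\}}\subseteq P^C(0)\cup S^C(5)$, and $v_3$ works. Your fallback for the case ``no uniform $\tilde{B}$-anchor exists'' --- ``branching over the $O(1)$ propeller and star positions involved'' --- is not developed and does not obviously do anything: enumerating positions neither produces a vertex complete to $\tilde{A}_{\{i,j\}}$ nor removes the hypothesis from \ref{lemma2}, so as written you cannot invoke \ref{lemma2} at all in that case. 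Since the case in fact cannot occur, what is missing is a proof of that fact, not a branching workaround; without it (and without the path construction for the compatibility claim) the proposal is a correct outline of the reduction but not a proof.
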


\begin{proof} 

Let $C$ be a 7-gon in $G$ given by $v_0-v_1-v_2-v_3-v_4-v_5-v_6-v_0$, and observe that $C$ can be found in time $O(|V(G)|^7)$. Since $G$ is clean, by \ref{7comps}.1, it follows that $Z^C$ is empty, and so we may partition $V(G)=V(C)\cup CL^C \cup P^C\cup S^C \cup X^C\cup Y^C$ as usual. Next, fix a $3$-coloring $c$ of $G[V(C)]$. Define the order 3 palette $L^C_c$ of $G$ as follows:

$$L^C_c(v) = \begin{cases}
 \{c(v)\} & \text{, \hspace{2ex}if $v\in V(C)$} \\
 \{1,2,3\} & \text{, \hspace{2ex}otherwise}\\
\end{cases}$$

\noindent Next, update the vertices in $CL^C\cup P^C\cup S^C$ with respect to $V(C)$. And so, $|L^C_c(v)| \leq 2$ for all $v \in V(G) \setminus (X^C\cup Y^C)$, while $|L^C_c(v)|=3$ for all $v\in X^C\cup Y^C$. Observe that, by construction, $(G,L^C_c)$ is colorable if and only the $3$-coloring $c$ of $G[V(C)]$ extends to a $3$-coloring of $G$. 

Let $A'=P^C\cup S^C$, and for every non-empty subset $X \subseteq \{1,2,3\}$, define $A'_X=\{a\in A'$ with $L^C_c(a)=X\}$.


\bigskip

\noindent \textit{(1) For every $u\in X^C\cup Y^C$ and $\{i,j,k\}=\{1,2,3\}$, $N(u) \cap A'_{\{i,j\}}$ is complete to $A'_{\{i,k\}} \setminus N(u)$.}

\bigskip

\noindent It is enough to show that for every $x \in N(u)\cap A'_{\{i,j\}}$ and $y \in A'_{\{i,k\}}\setminus N(u)$, such that $x$ is non-adjacent to $y$,  there exists an induced 6-vertex path $x-p_1-...-p_5$ with $p_1,...,p_5\in V(C) \cup CL^C\cup \{y\}$. For if such a path exists, then, since, by \ref{7-2}.1, $u$ is anticomplete to $V(C) \cup CL^C\cup \{y\}$, it follows that $u-x-p_1-...-p_5$ is a $P_7$ in $G$, a contradiction. 

Since $x \in A'_{\{i,j\}}$ and $y \in A'_{\{i,k\}}$, it follows from the definition
of updating that  all the anchors of $x$ are colored $k$, and all the anchors of $y$ are colored $j$. In particular, this implies that $x$ and $y$ have no anchors in common. 

Let $\{a,b\}=\{x,y\}$. 
First, suppose $a$ is a star. By symmetry, we may assume $a \in S^C(0)$.
Since $a$ and $b$ have no anchors in common, it follows that
$b$ is anticomplete to $\{v_0,v_2,v_5\}$. Since $b \in P^C\cup S^C$, it follows that $|N(b)\cap (V(C)\setminus \{v_0,v_2,v_5\})|\geq 2$ and so, by symmetry and \ref{C7}, we may assume that $v_1$ is an anchor of $b$, that is, that $b \in P^C(1)\cup S^C(1)\cup S^C(6)$. Further by symmetry,
we may assume that  $b \in P^C(1)\cup S^C(1)$. Suppose $a=x$. If $b\in P^C(1)$, then $x-v_0-v_1-y-v_4-v_3$ is the desired path, and if $b\in S^C(1)$, then $x-v_0-v_1-y-v_3-v_4$ is the desired path. Thus, we may assume $b=x$. If $b\in P^C(1)$, then $x-v_4-v_3-v_2-y-v_0$ is the desired path, and if $b\in S^C(1)$, then $x-v_3-v_4-v_5-y-v_0$ is the desired path. Hence, we may assume neither of $x, y$ is a  star, that is, that both $x$ and $y$ are propellers. By symmetry, we may assume $x \in P^C(0)$, and therefore  $y$ is anticomplete to $\{v_0,v_3\}$. Since $y \in P^C$, it follows that $|N(y)\cap (V(C)\setminus \{v_0,v_3\})|=2$ and so, by symmetry, we may assume that $v_1$ is an anchor of $y$, that is, that $y \in P^C(1)\cup P^C(5)$. If $y \in P^C(1)$, then $x-v_0-v_1-y-v_4-v_5$ is the desired path, and if $y \in P^C(5)$, then $x-v_3-v_2-v_1-y-v_5$ is the desired path. By our initial observation, this proves \textit{(1)}.


\bigskip

\noindent \textit{(2) For all distinct $i,j \in \{1,2,3\}$ some vertex of $V(C)\cup CL^C$ is complete to $A'_{\{i,j\}}$.}

\bigskip



\noindent If $A'_{\{i,j\}}=\emptyset$, then \textit{(2)} trivially holds. Thus, we may assume $A'_{\{i,j\}}\ne\emptyset$. Let $\{i,j,k\}=\{1,2,3\}$ and define $K$ to be the set of vertices of $V(C)$ with a neighbor in $A'_{\{i,j\}}$. Since we have updated, it follows that $c(v)=k$ for every $v\in K$. Since $G[V(C)]$ has no stable set of size 4, it follows that $|K|\leq 3$. Since $A'_{\{i,j\}}\subseteq P^C\cup S^C$, it follows that $|K|\geq 2$. If $|K|=2$, then $A_{\{i,j\}}\subseteq P^C$, and it follows that $K$ is complete to $A'_{\{i,j\}}$. Hence, we may assume $|K|=3$. By symmetry, we may assume that $K=\{v_0,v_3,v_5\}$. Since $G$ is triangle-free, it follows that $A'_{\{i,j\}}\subseteq P^C(0)\cup S^C(5)$, and so $v_3$ is complete to $A_{\{i,j\}}$. This proves \textit{(2)}.

\bigskip

\noindent By \ref{7AC}.2 and \ref{7comps}, it follows that every component of $X^C\cup Y^C$ has at most two vertices. And so, by \ref{7-2}, \textit{(1)} and \textit{(2)}, we can apply \ref{lemma2} with 

\begin{itemize}

\item $\tilde{A}=A'$,
\item $\tilde{B}=V(C) \cup CL^C$,  
\item $\tilde{C}=X^C\cup Y^C$, and 
\item $Z=\emptyset$.

\end{itemize}

\noindent Let $\mathcal{P}^C_c$ be the restriction of $(G,L^C_c, \emptyset)$, of size  $O(|V(G)|^7)$, thus obtained. By \ref{lemma2}, $\mathcal{P}^C_c$ can be computed in time $O(|V(G)|^7)$. By \ref{lemma2}\textit{(c)}, we have that $(G,L^C_c, \emptyset)$ (and, equivalently, $(G,L^C_c)$ ) is colorable if and only if $\mathcal{P}^C_c$ is colorable. Consider $(G',L',X')\in \mathcal{P}^C_c$. Since $|L^C_c(v)| \leq 2$ for all $v \in V(G) \setminus (X^C\cup Y^C)$, by \ref{lemma2}\textit{(b)}, it follows that $|L'(v)|\leq 2$ for all $v\in V(G')$. Thus, since $|X'|$ has size $O(|V(G)|)$, 
applying \ref{checkSubsets}, we can test in time
$O(|V(G)|^3)$ if $(G',L',X')$ is colorable, and extend the 
coloring to $(G,L^C_c)$.  Consequently, 
via $O(|V(G)|^7)$ applications of \ref{checkSubsets}, we can determine if $\mathcal{P}^C_c$ is colorable and extend any coloring of a colorable $(G',L',X') \in \mathcal{P}^C_c$ to a coloring of $G$.  That is, in time $O(|V(G)|^{10})$ we can determine if the $3$-coloring $c$ of $G[V(C)]$ extends to a $3$-coloring of $G$, and give an explicit $3$-coloring $c'$ of $G$ such that $c'(v)=c(v)$ for all $v\in V(C)$, if one exists. Finally, let $\mathcal{P}$ be the union of $\mathcal{P}^C_c$ taken over all $3$-colorings $c$ of $G[V(C)]$. Since there are at most $7^3$ $3$-colorings of $G[V(C)]$, it follows that we can test in time  
$O(|V(G)|^{10})$ if $\mathcal{P}$ is colorable.
Since every $3$-coloring of $G$ restricts to a $3$-coloring of $G[V(C)]$, it follows that $G$ is 3-colorable if and only if $\mathcal{P}$ is colorable. This proves \ref{7gon}.

\end{proof}

\section{Shells}

We remind the reader, that a shell in a graph $G$ is a pair $(C,p)$, 
where $C$ is a 6-gon given by $v_0-v_1-v_2-v_3-v_4-v_5-v_0$, and 
$p\in V(G)\setminus\{v_0,...,v_5\}$,  such that $N(p)\cap\{v_0,...,v_5\}=\{v_\ell,v_{\ell+3}\}$ for some $\ell\in \{0,1,2\}$.
In this section we show that if a $\{P_7,C_3,C_7\}$-free graph contains a shell, then in polynomial time we can decide if the graph is 3-colorable, and give a coloring if one exists.

We begin with some definitions. Let $C$ be a 6-gon in a graph $G$ given by $v_0-v_1-v_2-v_3-v_4-v_5-v_0$. We say that a vertex $v\in V(G)\setminus V(C)$ is:

\begin{itemize}

\item a \textit{leaf at $i$}, if $N(v)\cap V(C)=\{v_{i}\}$ for some $i\in\{0,1,...,5\}$, 

\item a \textit{clone at $i$}, if $N(v)\cap V(C)=\{v_{i-1}, v_{i+1}\}$ for some $i\in\{0,1,...,5\}$, where all indices are mod 6, 

\item a \textit{propeller at $\{i,i+3\}$}, if $N(v)\cap V(C)=\{v_{i}, v_{i+3}\}$ for some $i\in\{0,1,...,5\}$, where all indices are mod 6,

\item an \textit{even star}, if $N(v)\cap V(C)=\{v_0,v_2, v_4\}$,

\item an \textit{odd star}, if $N(v)\cap V(C)=\{v_1,v_3, v_5\}$.

\end{itemize}

\noindent The following shows how we can partition the vertices of $G$ based on their anchors in $C$.

\begin{lemma}\label{C6}

Let $G$ be a triangle-free graph, and suppose $C$ is a 6-gon in $G$ given by $v_0-v_1-v_2-v_3-v_4-v_5-v_0$. If $v\in V(G)\setminus V(C)$, then for some $i\in\{0,1,...,5\}$ either:

\begin{enumerate}

\item $v$ is a leaf at $i$,

\item $v$ is a clone at $i$,

\item $v$ is a propeller at $\{i,i+3\}$, where all indices are mod 6,

\item $v$ is an even star,

\item $v$ is an odd star, or

\item $v$ is anticomplete to $V(C)$.

\end{enumerate}
\end{lemma}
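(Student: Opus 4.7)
The plan is to exploit triangle-freeness to force $N(v)\cap V(C)$ to be a stable set in the 6-cycle $C$, and then enumerate the (finitely many) stable subsets of $C_6$ by size, matching each to one of the six listed cases.

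First I would observe that if both $v_i$ and $v_{i+1}$ lay in $N(v)$ for some $i$, then $\{v,v_i,v_{i+1}\}$ would be a triangle in $G$; since $G$ is triangle-free, this is impossible, and so $N(v)\cap V(C)$ is a stable subset of the cycle $v_0\text{-}v_1\text{-}v_2\text{-}v_3\text{-}v_4\text{-}v_5\text{-}v_0$. Every case of the lemma therefore reduces to a question about which stable subsets of $C_6$ can occur.

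Next I would carry out the enumeration. If $N(v)\cap V(C)=\emptyset$ we get conclusion~\textit{6}. If $|N(v)\cap V(C)|=1$, then $N(v)\cap V(C)=\{v_i\}$ for some $i$, giving conclusion~\textit{1}. If $|N(v)\cap V(C)|=2$, then the two anchors have cyclic distance $2$ or $3$ on $C$ (distance $1$ being excluded by stability): distance $2$ gives $N(v)\cap V(C)=\{v_{i-1},v_{i+1}\}$ for some $i$, which is conclusion~\textit{2}, while distance $3$ gives $\{v_i,v_{i+3}\}$, which is conclusion~\textit{3}. If $|N(v)\cap V(C)|=3$, I would note that the three edges $v_0v_1$, $v_2v_3$, $v_4v_5$ partition $V(C)$, so any stable set must pick at most one vertex from each; checking the two parity classes one sees that the only stable triples are $\{v_0,v_2,v_4\}$ and $\{v_1,v_3,v_5\}$, yielding conclusions~\textit{4} and~\textit{5} respectively. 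Finally, the same partition into three edges immediately rules out stable sets of size $\geq 4$, so the enumeration is complete.

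There is no real obstacle here: the statement is essentially the classification of independent sets in $C_6$, dressed up in the language of anchors. The only care needed is the indexing convention (reading indices modulo $6$) and the verification that the two stable triples correspond exactly to the ``even'' and ``odd'' star cases as defined.
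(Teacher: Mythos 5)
Your proof is correct and takes essentially the same route as the paper: both arguments boil down to the observation that triangle-freeness makes the anchor set a stable subset of $C_6$, followed by an exhaustive classification of those stable sets (the paper organizes the cases by fixing $v_0\in N(v)$ and branching on $v_3$, while you enumerate by cardinality, but the content is identical).
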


\begin{proof}

Consider a vertex $v\in V(G)\setminus V(C)$. If $v$ is anticomplete to $V(C)$, then \ref{C6}.6 holds. Thus, we may assume $N(v)\cap V(C)\ne \emptyset$. By symmetry, suppose that $v_0\in N(v)\cap V(C)$. Since $G$ is triangle-free, it follows that $v$ is anticomplete to $\{v_1,v_5\}$. Suppose $v$ is non-adjacent to $v_3$. If $v$ is anticomplete to $\{v_2,v_4\}$, then \ref{C6}.1 holds. If $v$ is mixed on $\{v_2,v_4\}$, then \ref{C6}.2 holds. If $v$ is complete to $\{v_2,v_4\}$, then \ref{C6}.4 holds. Thus, we may assume $v$ is adjacent to $v_3$. Since $G$ is triangle-free, it follows that $v$ is anticomplete to $\{v_2,v_4\}$, and so \ref{C6}.3 holds. This proves \ref{C6}.

\end{proof}

Let $G$ be a triangle-free graph and $C$ be a 6-gon in $G$ given by $v_0-v_1-v_2-v_3-v_4-v_5-v_0$. We partition $V(G)\setminus V(C)$ as follows:

\begin{itemize}

\item Let $M^C(i)$ be the set of leaves at $i$ and define $M^C=\bigcup\limits_{i=0}^5M^C(i)$.

\item Let $CL^C(i)$ be the set of clones at $i$ and define $CL^C=\bigcup\limits_{i=0}^5CL^C(i)$.

\item Let $P^C(\{i,i+3\})$ be the set of propellers at $\{i,i+3\}$ and define $P^C=\bigcup\limits_{i=0}^5P^C(\{i,i+3\})$.

\item Let $S^C_0$ be the set of even stars.

\item Let $S^C_1$ be the set of odd stars.

\item Let $S^C=S^C_0\cup S^C_1$.

\item Let $A^C$ be the set of vertices anticomplete to $V(C)$.

\end{itemize}

\bigskip

\noindent By \ref{C6}, it follows that $V(G)=V(C)\cup M^C \cup CL^C\cup P^C \cup S^C \cup A^C$. Furthermore, we partition $A^C=X^C\cup Y^C\cup Z^C$, where

\begin{itemize}
 
\item $X^C$ is the set of vertices in $A^C$ with a neighbor in $CL^C$,

\item $Y^C$ is the set of vertices in $A^C\setminus X^C$ with a neighbor in $P^C$,

\item $Z^C=A^C\setminus (X^C\cup Y^C)$.

\end{itemize}

\bigskip

\noindent And so, given a 6-gon $C$ in $G$ in time $O(|V(G)|^2)$ we obtain the partition $V(C)\cup M^C \cup CL^C\cup P^C \cup S^C \cup X^C\cup Y^C\cup Z^C$ of $V(G)$. Now, we establish several properties of this partition. The following is
immediate:

\begin{lemma}\label{6-1} 
If $G$ is a triangle-free graph, then for every 6-gon $C$ in $G$ the following hold:

\begin{enumerate}

\item Every vertex in $X^C$ has a neighbor in $CL^C$.

\item Every vertex in $Y^C$ has a neighbor in $P^C$.

\item $CL^C$ is anticomplete to $Y^C\cup Z^C$.

\end{enumerate}

\end{lemma}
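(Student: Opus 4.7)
The plan is to observe that all three parts of this lemma follow directly from the way the partition $A^C = X^C \cup Y^C \cup Z^C$ was set up, so the proof will be essentially a matter of unpacking definitions. I will simply verify each of the three statements against the defining conditions of $X^C$, $Y^C$, and $Z^C$.

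For part 1, I will point out that by definition $X^C$ is the set of vertices in $A^C$ that have a neighbor in $CL^C$, so the statement is immediate. For part 2, since $Y^C$ is defined as the set of vertices in $A^C \setminus X^C$ that have a neighbor in $P^C$, the statement is again immediate. For part 3, I will use that $Y^C \cup Z^C \subseteq A^C \setminus X^C$, together with the fact that $X^C$ consists of exactly those vertices in $A^C$ adjacent to some vertex of $CL^C$; hence no vertex of $Y^C \cup Z^C$ has a neighbor in $CL^C$, which is the claim.

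The only potential subtlety, which is worth spelling out briefly, is that the definitions involved are taken inside the set $A^C$, so one must notice that both $Y^C$ and $Z^C$ are subsets of $A^C \setminus X^C$; once this is said, part 3 follows. Since the lemma is labeled as immediate in the excerpt, there is no real obstacle and no nontrivial case analysis is needed: the entire proof amounts to one or two sentences pointing back at the partition defined just above the statement.
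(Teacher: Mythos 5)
Your proof is correct and matches the paper exactly: the paper states this lemma with no proof, prefacing it with ``The following is immediate,'' since all three parts are direct consequences of the definitions of $X^C$, $Y^C$, and $Z^C$ (in particular, $Y^C\cup Z^C\subseteq A^C\setminus X^C$ and $X^C$ is precisely the set of vertices of $A^C$ with a neighbor in $CL^C$, giving part 3). Nothing further is needed.
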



\noindent Next, we show:

\begin{lemma}\label{A}

If $G$ is a $\{P_7,C_3,C_7\}$-free graph, then for every 6-gon $C$ in $G$ the following hold:

\begin{enumerate}

\item $M^C$ is anticomplete to $A^C$.

\item For every $q\in M^C\cup CL^C\cup P^C$, there exists $p_1,p_2,p_3\in V(C)$ such that $p_1-p_2-p_3-q$ is an induced path.


\item $X^C$ is stable and anticomplete to $Y^C\cup Z^C$.

\item $Z^C$ is anticomplete to $V(G)\setminus (Y^C\cup S^C)$.

\item For every $i\in\{0,...,5\}$, if $M^C(i)$ is non-empty, then $M^C(i+2)\cup M^C(i-2)$ is empty, where all indices are mod 6.

\item No vertex in $A^C$ has a neighbor in $CL^C(i)$ and $CL^C(j)$ for $i\ne j$.

\end{enumerate}
\end{lemma}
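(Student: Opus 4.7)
The six parts are all direct constructions using $C$ and the forbidden subgraphs $P_7, C_3, C_7$, and I would handle them roughly in the order stated.

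For Part 1, if $m\in M^C(i)$ is adjacent to $a\in A^C$, take $i=0$ by symmetry and observe that $a-m-v_0-v_1-v_2-v_3-v_4$ is an induced $P_7$. For Part 2, a small case analysis suffices: for $q\in M^C(0)$ use $v_2-v_1-v_0-q$, for $q\in CL^C(0)$ use $v_3-v_4-v_5-q$, and for $q\in P^C(\{0,3\})$ use $v_5-v_4-v_3-q$; each is induced because the anchors of $q$ on $C$ are prescribed. For Part 5, suppose $m\in M^C(0)$ and $m'\in M^C(2)$ both exist. If $mm'$ is a non-edge then $m-v_0-v_5-v_4-v_3-v_2-m'$ is an induced $P_7$, and if $mm'$ is an edge then $m-m'-v_2-v_3-v_4-v_5-v_0-m$ is an induced $C_7$, contradicting $C_7$-freeness.

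For Part 3, pick $x\in X^C$ and a $CL^C$-neighbor $c$, taking $c\in CL^C(0)$ by symmetry. For any $a\in A^C$ with $ax$ an edge, triangle-freeness forces $ac$ to be a non-edge, and then $a-x-c-v_1-v_2-v_3-v_4$ is an induced $P_7$; specializing to $a\in X^C$ yields that $X^C$ is stable, while specializing to $a\in Y^C\cup Z^C$ yields that $X^C$ is anticomplete to $Y^C\cup Z^C$. Part 4 is then bookkeeping: a vertex $z\in Z^C$ has no neighbor in $V(C)$ (as $z\in A^C$), no neighbor in $M^C$ (Part 1), no neighbor in $CL^C\cup P^C$ (from the definition $Z^C=A^C\setminus(X^C\cup Y^C)$ together with \ref{6-1}.3), and no neighbor in $X^C$ (Part 3), so its remaining neighbors lie in $Y^C\cup S^C\cup Z^C$, as claimed.

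For Part 6, let $a\in A^C$ have neighbors $c_1\in CL^C(i)$ and $c_2\in CL^C(j)$ with $i\ne j$; by \ref{6-1}.3, $a\in X^C$. Triangle-freeness, either via $\{a,c_1,c_2\}$ or via a common anchor of $c_1,c_2$ on $C$, forces $c_1c_2$ to be a non-edge. I would then invoke rotational and reflective symmetry to reduce to $j-i\bmod 6\in\{1,2,3\}$. In the distance-$2$ case $(0,2)$ the sequence $v_2-v_3-c_2-a-c_1-v_5-v_0$ is an induced $P_7$, and in the distance-$3$ case $(0,3)$ the sequence $v_3-v_4-c_2-a-c_1-v_1-v_0$ is an induced $P_7$. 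The main obstacle is the distance-$1$ case $(0,1)$: every natural $P_7$ through $c_1,a,c_2$ is ruined by an edge of $C$, so instead one closes the cycle $c_1-v_5-v_4-v_3-v_2-c_2-a-c_1$, which is an induced $C_7$. This, together with the edge subcase of Part 5, is the only place where $C_7$-freeness (as opposed to merely $P_7$-freeness) is essential; checking non-adjacencies in all the $P_7$s and $C_7$s is routine from the anchor sets and the non-edges of $C$.
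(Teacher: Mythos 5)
Your proposal is correct and follows essentially the same route as the paper: explicit induced $P_7$'s and $C_7$'s for parts 1, 2, 5 and 6 (your paths and cycles coincide with the paper's up to relabelling), and bookkeeping from the definitions together with parts 1 and 3 and \ref{6-1}.3 for part 4. The only cosmetic difference is in part 3, where the paper replaces $v_0$ by the clone $c$ to form a new 6-gon $C'$ and applies part 1 to it, while you write out the resulting $P_7$ $a-x-c-v_1-v_2-v_3-v_4$ directly; the two arguments are the same.
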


\begin{proof}

Let $C$ be a 6-gon in $G$ given by $v_0-v_1-v_2-v_3-v_4-v_5-v_0$. Suppose there exists $a\in A^C$ adjacent to $m\in M^C$. By symmetry, we may assume $m\in M^C(0)$. However, then $a-m-v_0-v_1-v_2-v_3-v_4$ is a $P_7$ in $G$, a contradiction. This proves \ref{A}.1. 

Consider a vertex $q\in M^C\cup CL^C\cup P^C$. By symmetry, we may assume $q$ is adjacent to $v_0$ and non-adjacent to $v_1,v_2$, and so $v_2-v_1-v_0-q$ is an  induced path. This proves \ref{A}.2.


Consider a vertex $x\in X^C$. By \ref{6-1}.1, there exists $c\in CL^C$ adjacent to $x$. By symmetry, we may assume $c\in CL^C(0)$. Let $C'$ be the 6-gon given by $c-v_1-v_2-v_3-v_4-v_5-c$. Suppose there exists $x'\in A^C$ adjacent to $x$. Since $G$ is triangle-free, it follows that $c$ is non-adjacent to $x'$. However, then $x\in M^{C'}$ is adjacent to $x'\in A^{C'}$, contrary to \ref{A}.1. This prove \ref{A}.3.


By \ref{6-1}.3 and \ref{A}.1, it follows that \ref{A}.4 holds. 

To prove  \ref{6-1}.5, suppose there exists $m\in M^C(0)$ and $m'\in M^C(2)$. If $m$ is non-adjacent to $m'$, then $m'-v_2-v_3-v_4-v_5-v_0-m$ is a $P_7$ in $G$, and if $m$ is adjacent to $m'$, then $m-m'-v_2-v_3-v_4-v_5-v_0-m$ is a $C_7$ in $G$, in both cases a contradiction. This proves \ref{A}.5.

We now prove \ref{A}.6. Assume $a\in A^C$ is adjacent to $c\in CL^C$. By symmetry, we may assume $c\in CL^C(0)$. Suppose there exists $c'\in CL^C\setminus CL^C(0)$ adjacent to $a$. Since $G$ is triangle-free, it follows that $c$ is non-adjacent to $c'$. By symmetry, it suffices to consider $c'\in CL^C(1)\cup CL^C(2)\cup CL^C(3)$. If $c'\in CL^C(1)$, then $a-c'-v_2-v_3-v_4-v_5-c-a$ is a $C_7$ in $G$, if $c'\in CL^C(2)$, then $v_2-v_3-c'-a-c-v_5-v_0$ is a $P_7$ in $G$, and if $c'\in CL^C(3)$, then $v_0-v_1-c-a-c'-v_4-v_3$ is a $P_7$ in $G$, in all three cases a contradiction. This proves \ref{A}.6. 
This proves \ref{A}.

\end{proof}

\bigskip


\bigskip


Let $C$ be a 6-gon in $G$ given by $v_0-v_1-v_2-v_3-v_4-v_5-v_0$. We say that a graph $G$ has a \textit{type I coloring with respect to $C$} if there exists a $3$-coloring $c$ of $G$ such that $c(v_i)=c(v_{i+3})$ for every $i\in\{0,1,2\}$.

\begin{lemma}\label{type1} There is an algorithm with the following specifications:

\bigskip

{\bf Input:} A clean, connected $\{P_7,C_3,C_7\}$-free graph $G$.

\bigskip

{\bf Output:} A type I coloring of $G$ with respect to some 6-gon in $G$, or a determination that none exists.

\bigskip

{\bf Running time:} $O(|V(G)|^{16})$.

\end{lemma}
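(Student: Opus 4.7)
The plan is to mirror the proof of \ref{7gon}: enumerate all 6-gons $C$ of $G$ in time $O(|V(G)|^6)$, and for each try to extend each of the six type I 3-colorings of $V(C)$ to a 3-coloring of $G$. Fix a 6-gon $C$ given by $v_0-v_1-\cdots-v_5-v_0$; using \ref{C6} compute the partition $V(G) = V(C) \cup M^C \cup CL^C \cup P^C \cup S^C \cup X^C \cup Y^C \cup Z^C$ in time $O(|V(G)|^2)$. In any type I coloring, the 6-gon is properly colored and antipodal vertices share a color, so $c(v_0),c(v_1),c(v_2)$ are pairwise distinct and $\{c(v_0),c(v_2),c(v_4)\}=\{1,2,3\}$; hence any even or odd star in $S^C$ would force its three anchors to receive three distinct colors, leaving no color available for the star itself. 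Thus if $S^C \neq \emptyset$ no type I coloring w.r.t.\ $C$ exists and we discard $C$. Otherwise, apply \ref{3edge} with $P=V(C)$, $Q=CL^C \cup P^C$, $R=X^C \cup Y^C$, $S=Z^C$, $T=M^C$, whose hypotheses follow from \ref{6-1}, \ref{A}.1 and \ref{A}.2, to conclude $Z^C = \emptyset$ and that every component of $X^C \cup Y^C$ has at most two vertices.

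For each of the six type I colorings $c$ of $V(C)$, initialize $L_c^C(v)=\{c(v)\}$ for $v \in V(C)$ and $L_c^C(v)=\{1,2,3\}$ otherwise, and update palettes of $V(G) \setminus V(C)$ with respect to $V(C)$. After updating, clones have palette size $1$ (two anchors of distinct colors), propellers and leaves have palette size $2$, and vertices of $X^C \cup Y^C$ retain full palette. By construction, $(G,L_c^C)$ is colorable if and only if $c$ extends to a 3-coloring of $G$. Then apply \ref{lemma2} with $\tilde{C}=X^C \cup Y^C$, $\tilde{A}=\{v \in CL^C \cup P^C : N(v) \cap \tilde{C} \neq \emptyset\}$, $\tilde{B}=V(G) \setminus (\tilde{A} \cup \tilde{C})$ and $Z=\emptyset$. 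Cleanness of $G$ and the structure just established supply all the standard hypotheses; moreover, any non-empty $\tilde{A}_{\{i,j\}}$ consists only of propellers at the unique antipodal pair $\{k,k+3\}$ with $c(v_k) \notin \{i,j\}$, and $v_k \in V(C) \subseteq \tilde{B}$ is complete to $\tilde{A}_{\{i,j\}}$ as required.

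The main technical obstacle is verifying the last combinatorial hypothesis of \ref{lemma2}. The plan is to establish the stronger statement used in step~(1) of the proof of \ref{7gon}, namely that for every $u \in \tilde{C}$ and every permutation $\{i,j,k\}=\{1,2,3\}$, $N(u) \cap \tilde{A}_{\{i,j\}}$ is complete to $\tilde{A}_{\{i,k\}} \setminus N(u)$. Supposing the contrary, one obtains non-adjacent propellers $x \in P^C(\{k,k+3\})$ of palette $\{i,j\}$ and $y \in P^C(\{l,l+3\})$ of palette $\{i,k\}$ with $x \in N(u)$ and $y \notin N(u)$; using the anchors of $x$ and $y$, the vertex $u$, and (when needed) suitable vertices of $V(C) \cup CL^C$, we aim to exhibit either an induced $P_7$ or an induced $C_7$ in $G$, contradicting $\{P_7,C_3,C_7\}$-freeness. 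The symmetries of $C$ and the fact that only three distinct antipodal pairs exist reduce this to a short case analysis on the relative position of $\{l,l+3\}$ and $\{k,k+3\}$, with the tighter geometry of the 6-gon (compared to the 7-gon) occasionally requiring the path to detour through a clone rather than directly through $V(C)$.

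For each pair $(C,c)$, \ref{lemma2} yields a restriction $\mathcal{P}_c^C$ of size $O(|V(G)|^7)$ in time $O(|V(G)|^7)$; each element has all palettes of size at most two (since $\tilde{A} \cup \tilde{B}$ already had palettes of size at most two after updating, and \ref{lemma2} reduces $\tilde{C}$ to size at most two) and $O(|V(G)|)$ monochromatic constraints, so \ref{checkSubsets} tests its colorability in time $O(|V(G)|^3)$. Summing over the $O(|V(G)|^6)$ choices of $C$ and the six choices of $c$ yields total running time $O(|V(G)|^{16})$. A coloring of a colorable element extends in linear time to a type I coloring of $G$ w.r.t.\ the corresponding $C$, and conversely every type I coloring of $G$ w.r.t.\ some 6-gon arises from some colorable element of the union, proving \ref{type1}.
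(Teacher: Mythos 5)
Your overall plan (enumerate 6-gons, discard $C$ when $S^C\neq\emptyset$, use \ref{3edge} to get $Z^C=\emptyset$ and components of $X^C\cup Y^C$ of size at most two, update with respect to a type~I coloring of $V(C)$, then feed $\tilde{A}\subseteq CL^C\cup P^C$, $\tilde{C}=X^C\cup Y^C$ into \ref{lemma2} and finish with \ref{checkSubsets}, with the same $O(|V(G)|^{16})$ accounting) is the paper's approach. But the step you yourself flag as ``the main technical obstacle'' --- verifying the combinatorial hypothesis of \ref{lemma2} --- has a genuine gap. You propose to prove the single-vertex statement borrowed from step (1) of \ref{7gon}: for every $u\in\tilde{C}$, $N(u)\cap\tilde{A}_{\{i,j\}}$ is complete to $\tilde{A}_{\{i,k\}}\setminus N(u)$, by exhibiting a $P_7$ or $C_7$ built from $u$, the two propellers, their anchors, and vertices of $V(C)\cup CL^C$. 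In the 6-gon setting this construction does not go through. If $x\in P^C(\{0,3\})$ is adjacent to $u$ and $y\in P^C(\{1,4\})$ is non-adjacent to both $u$ and $x$, the longest induced path obtainable from these vertices and $V(C)$ is a $P_6$ (for instance $u-x-v_3-v_2-v_1-y$ or $u-x-v_0-v_5-v_4-y$); every attempted seventh vertex from $V(C)$ is blocked by an anchor of $x$ or $y$, and the ``detour through a clone'' does not rescue this: a clone adjacent to $y$ near the relevant anchors creates a triangle, and --- unlike the 7-gon case, where \ref{7-1} makes $A^C$ anticomplete to $CL^C$ --- here vertices of $\tilde{C}=X^C\cup Y^C$ can themselves be adjacent to clones (that is exactly how $X^C$ is defined), so a clone on the path may be adjacent to $u$. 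In particular, the configuration in which every $\tilde{C}$-neighbor of $y$ is also adjacent to $x$ leaves your construction with no seventh vertex at all; excluding it (if it can be excluded) would need an additional argument exploiting cleanness which you do not give, so your stronger claim is unproven and quite possibly false.

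The paper avoids this entirely by proving only the two-vertex statement, its claim (1) in the proof of \ref{type1}: for $c_1,c_2\in X^C\cup Y^C$, $(N(c_1)\cap A'_{\{i,j\}})\setminus N(c_2)$ is complete to $(N(c_2)\cap A'_{\{i,k\}})\setminus N(c_1)$. This is precisely the form in which the hypothesis of \ref{lemma2} is stated (it involves two vertices $c_1,c_2$ for exactly this reason), and its proof is easy because the second vertex supplies the missing endpoint: with $p_1\in P^C(\{0,3\})$ adjacent to $c_1$ but not $c_2$, and $p_2\in P^C(\{1,4\})$ adjacent to $c_2$ but not $c_1$, one gets the $P_7$ given by $c_2-p_2-v_1-v_2-v_3-p_1-c_1$ when $c_1,c_2$ are non-adjacent, and a $C_7$ when they are adjacent. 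So the repair is straightforward --- verify the hypothesis of \ref{lemma2} in its stated two-vertex form rather than your stronger one --- but as written the heart of your argument is a sketch of a case analysis that cannot be completed by the means you describe. The rest of your proposal (including the slightly more careful choice $\tilde{A}=\{v\in CL^C\cup P^C:\ N(v)\cap\tilde{C}\neq\emptyset\}$, the complete vertex $v_k\in\tilde{B}$ for each $\tilde{A}_{\{i,j\}}$, and the running-time analysis) matches the paper and is fine.
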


\begin{proof}

In time $O(|V(G)|^6)$, we can enumerate all 6-gons in $G$. If $G$ is $C_6$-free, then clearly $G$ does not have a type I coloring and we may halt. Hence, we may assume there exists a 6-gon $C$ in $G$ given by $v_0-v_1-v_2-v_3-v_4-v_5-v_0$. In time $O(|V(G)|^2)$, we can partition $V(G)=V(C)\cup M^C \cup CL^C\cup P^C \cup S^C \cup X^C\cup Y^C\cup Z^C$ as usual. If $S^C$ is non-empty, then $G$ does not have a type I coloring with respect to $C$, since, by definition, the anchors of any star in $C$ receive three distinct colors in any type I coloring. Hence, we may assume $S^C$ is empty. Next, fix a $3$-coloring $c$ of $G[V(C)]$ such that $c(v_i)=c(v_{i+3})$ for every $i\in\{1,2,3\}$, where all indices are mod 6. Define the order 3 palette $L^C_c$ of $G$ as follows: For every $v\in V(G)$, set

$$L^C_c(v) = \begin{cases}
 \{c(v)\} & \text{, \hspace{2ex}if $v\in V(C)$} \\
 \{1,2,3\} & \text{, \hspace{2ex}otherwise}\\
\end{cases}$$

\noindent Next, update the vertices in $M^C\cup CL^C\cup P^C$ with respect to $V(C)$. And so, $|L^C_c(v)| \leq 2$ for all $v \in V(G) \setminus (X^C\cup Y^C\cup Z^C)$, while $|L^C_c(v)|=3$ for all $v\in X^C\cup Y^C\cup Z^C$. Additionally, $|L^C_c(v)|=2$ if and only if $v\in M^C\cup P^C$. Observe that, by construction, $(G,L^C_c)$ is colorable if and only the $3$-coloring $c$ of $G[V(C)]$ extends to a type I coloring of $G$. 

By \ref{6-1}, \ref{A}.1, \ref{A}.2 and \ref{A}.4, we may apply \ref{3edge} letting $P=V(C)\cup M^C$, $Q=CL^C\cup P^C$, $R=X^C\cup Y^C$, $S=Z^C$, and $T=\emptyset$. It follows that $Z^C$ is empty and that every component of $X^C\cup Y^C$ has size at most two. Let $A'=CL^C\cup P^C$, and for every non-empty subset $X \subseteq \{1,2,3\}$, define $A'_X=\{a\in A'$ with $L^C_c(a)=X\}$.
Since for $v \in A'$,  $|L^C_c(v)|=2$ if and only if 
$v \in P^C$, it follows that if $|X|=2$, then $A_X=P^C(\{i,i+3\})$ for some 
$i \in \{0,1,2\}$.

\bigskip

\noindent \textit{(1) For every $c_1,c_2 \in X^C\cup Y^C$ and $\{i,j,k\}= \{1,2,3\}$, $(N(c_1)\cap A'_{\{i,j\}})\setminus N(c_2)$ is complete to $(N(c_2)\cap A'_{\{i,k\}})\setminus N(c_1)$.}

\bigskip

\noindent We may assume $A'_{\{i,j\}}=P^C(\{0,3\})$ and $A'_{\{i,k\}}=P^C(\{1,4\})$. Suppose there exists $p_1\in P^C(\{0,3\})\setminus N(c_2)$ adjacent to $c_1$ and $p_2\in P^C(\{1,4\})\setminus N(c_1)$ adjacent to $c_2$ such that $p_1$ is non-adjacent to $p_2$. If $c_1$ is non-adjacent to $c_2$, then $c_2-p_2-v_1-v_2-v_3-p_1-c_1$ is a $P_7$ in $G$, and if $c_1$ is adjacent to $c_2$, then $c_1-c_2-p_2-v_1-v_2-v_3-p_1-c_1$ is a $C_7$ in $G$, in both cases a contradiction. Hence, it follows that $p_1$ is adjacent to $p_2$. This proves \textit{(1)}.


\bigskip

\noindent \textit{(2) For all distinct $i,j \in \{1,2,3\}$ some vertex of $V(C)\cup M^C$ is complete to $A'_{\{i,j\}}$.}

\bigskip

\noindent After updating, it follows that $|L^C_c(v)|=2$ if and only if $v\in M^C\cup P^C$. By symmetry, we may assume $A'_{\{i,j\}}=P^C(\{0,3\})$. Hence, $\{v_0,v_3\}$ is complete to $\tilde{A}_{\{i,j\}}$. This proves \textit{(2)}.

\bigskip

\noindent By \textit{(1)} and \textit{(2)}, we can apply \ref{lemma2} with 

\begin{itemize}

\item $\tilde{A}=CL^C\cup P^C$, 
\item $\tilde{B}=V(C) \cup M^C$, 
\item $\tilde{C}=X^C\cup Y^C$, and
\item $Z=\emptyset$.

\end{itemize}

\noindent Let $\mathcal{P}^C_c$ be the restriction of  $(G,L^C_c, \emptyset)$ of size
$O(|V(G)|^7)$ thus obtained. By \ref{lemma2}, $\mathcal{P}^C_c$ can be computed in time $O(|V(G)|^7)$. By \ref{lemma2}\textit{(c)}, we have that $(G,L^C_c, \emptyset)$  (and equivalently $(G,L^C_c)$) is colorable if and only if $\mathcal{P}^C_c$ is colorable. Consider 
$(G',L',X')\in \mathcal{P}^C_c$. Since $|L^C_c(v)| \leq 2$ for all $v \in V(G) \setminus (X^C\cup Y^C)$, by \ref{lemma2}\textit{(b)}, it follows that $|L'(v)|\leq 2$ for all $v\in V(G')$. Since $|X'|$ has size $O(|V(G)|)$,
applying \ref{checkSubsets}, it follows that 
we can determine in time $O(|V(G)|^3)$ if $(G',L',X')$ is colorable, and 
if it is, extend the coloring to a coloring of $G$. Therefore,
via $O(|V(G)|^7)$ applications of \ref{checkSubsets}, we can determine if $\mathcal{P}^C_c$ is colorable. That is, in time $O(|V(G)|^{10})$ we can determine if the $3$-coloring $c$ of $G[V(C)]$ extends to a type I coloring of $G$, and give an explicit type I coloring $c'$ of $G$ such that $c'(v)=c(v)$ for all $v\in V(C)$, if one exists. Finally, let $\mathcal{P}$ be the union of $\mathcal{P}^C_c$ taken over all 6-gons $C$ in $G$ given by $v_0-v_1-v_2-v_3-v_4-v_5-v_0$ with $S^C$ empty and all $3$-colorings $c$ of $G[V(C)]$ such that $c(v_i)=c(v_{i+3})$ for every $i\in\{1,2,3\}$, where all indices are mod 6.  Since every type I coloring of $G$ restricts to such a coloring of $G[V(C)]$, it follows that $G$ has a type I coloring if and only if $\mathcal{P}$ is colorable.
Since there are $O(|V(G)|^6)$ 6-gons in $G$ and $3!$ such colorings of $G[V(C)]$, it follows that $\mathcal{P}$  consists of $O(|V(G)|^6)$ restrictions $\mathcal{P}^C_c$, and so by the previous argument, we can determine in time $O(|V(G)|^{16})$ if $G$ admits a type I coloring, and construct such a coloring if one exists. This proves \ref{type1}.
\end{proof}

Let $C$ be a 6-gon in $G$ given by $v_0-v_1-v_2-v_3-v_4-v_5-v_0$. We say that a graph $G$ has a \textit{type II coloring with respect to $C$} (or just a 
{\em type II coloring} when the details are not important) if there exist a $3$-coloring $c$ of $G$ such that  $c(p_1) \neq c(p_2)$ for some  $p_1,p_2 \in P^C(0,3)$.

\begin{lemma}\label{type2} There is an algorithm with the following specifications:

\bigskip

{\bf Input:} A clean, connected $\{P_7,C_3,C_7\}$-free graph $G$, that does not admit a type I coloring.

\bigskip

{\bf Output:} A type II coloring of $G$ with respect to some 6-gon in $G$, or a determination that none exists.

\bigskip

{\bf Running time:} $O(|V(G)|^{18})$.

\end{lemma}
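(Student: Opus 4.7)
The plan is to mirror the structure of the proof of \ref{type1}. First, enumerate all 6-gons $C$ in $G$ in time $O(|V(G)|^6)$; if $G$ is $C_6$-free, no type II coloring exists and we halt. For each 6-gon labeled $v_0-v_1-v_2-v_3-v_4-v_5-v_0$, I would search for type II colorings where the distinctly colored propellers lie in $P^C(\{0,3\})$; enumerating 6-gons with their orderings covers all three positions of the long diagonal.

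Because every $p \in P^C(\{0,3\})$ is adjacent to both $v_0$ and $v_3$, any type II coloring $c$ with $c(p_1) \neq c(p_2)$ forces $c(v_0) = c(v_3)$, since otherwise both $c(p_1)$ and $c(p_2)$ would be pinned to the unique color in $\{1,2,3\} \setminus \{c(v_0),c(v_3)\}$. Hence $\{c(v_1), c(v_2)\} = \{c(v_4), c(v_5)\} = \{1,2,3\} \setminus \{c(v_0)\}$, and since $G$ admits no type I coloring we must have $c(v_1) = c(v_5)$ and $c(v_2) = c(v_4)$. In this configuration the anchors $\{v_0, v_2, v_4\}$ of any even star and $\{v_1, v_3, v_5\}$ of any odd star fail to receive three distinct colors, so $S^C$ must be empty. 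I would then enumerate such 3-colorings of $V(C)$ (constantly many) together with ordered pairs $(p_1, p_2) \in P^C(\{0,3\}) \times P^C(\{0,3\})$ with $p_1 \neq p_2$ and $\{c(p_1), c(p_2)\} = \{1,2,3\} \setminus \{c(v_0)\}$; this contributes a factor of $O(|V(G)|^2)$.

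For each configuration, initialize $L^C_c$ with $\{c(v)\}$ on $V(C) \cup \{p_1, p_2\}$ and $\{1,2,3\}$ elsewhere, then update. I would proceed essentially as in \ref{type1}: partition $V(G)$ via \ref{C6}, invoke \ref{3edge} with $P = V(C) \cup M^C$, $Q = CL^C \cup P^C$, $R = X^C \cup Y^C$, $S = Z^C$, $T = \emptyset$ to conclude that $Z^C = \emptyset$ and that every component of $X^C \cup Y^C$ has size at most two, verify the analogues of conditions \textit{(1)} and \textit{(2)} from the proof of \ref{type1} by exhibiting the required 6-vertex induced paths inside $V(C) \cup CL^C$, and apply \ref{lemma2} with $\tilde{C}$ taken to be the subset of $X^C \cup Y^C$ whose palettes remain of size 3 after updating, $\tilde{A}$ the remaining vertices of palette at most 2 having a neighbor in $\tilde{C}$, and $\tilde{B}$ the rest (including $V(C)$, $M^C$, $\{p_1,p_2\}$, and any vertex of $X^C \cup Y^C$ whose palette was reduced by adjacency to $\{p_1, p_2\}$ but which has no neighbor in $\tilde{C}$). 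Finally, apply \ref{checkSubsets} to each element of the resulting restriction.

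The main obstacle will be handling the few vertices of $X^C \cup Y^C$ whose palettes are reduced from size $3$ to size $2$ by adjacency to $\{p_1, p_2\}$: these must be placed into $\tilde{A}$ or $\tilde{B}$ in a way that still satisfies the hypotheses of \ref{lemma2}, and the completeness conditions of that lemma must be re-verified in the presence of these relocated vertices. Since every component of $X^C \cup Y^C$ has size at most two, each affected vertex has at most one neighbor in $X^C \cup Y^C$, making this reassignment tractable. The total running time is $O(|V(G)|^6) \cdot O(|V(G)|^2) \cdot O(|V(G)|^{10}) = O(|V(G)|^{18})$, as required.
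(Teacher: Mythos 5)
There is a genuine gap, and it occurs at the very step on which the rest of your argument rests: the claim that $S^C$ must be empty. Your own computation of the forced coloring shows that on the 6-gon the colors are $c(v_0)=c(v_3)$, $c(v_1)=c(v_5)$, $c(v_2)=c(v_4)$ (say $1,2,3,1,3,2$ around the cycle), so the anchors $\{v_0,v_2,v_4\}$ of an even star receive only the two colors $\{1,3\}$ and the anchors $\{v_1,v_3,v_5\}$ of an odd star receive only $\{1,2\}$. Receiving fewer than three distinct colors is exactly what makes a star \emph{colorable} (with the remaining color); it is the type I pattern, where the anchors get three distinct colors, that excludes stars. So your inference is backwards: $S^C$ need not be empty in the type II setting, and the reduction modeled on \ref{type1} collapses. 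Concretely, your application of \ref{3edge} with $P=V(C)\cup M^C$, $Q=CL^C\cup P^C$, $R=X^C\cup Y^C$, $S=Z^C$, $T=\emptyset$ does not even partition $V(G)$ once $S^C\neq\emptyset$, and vertices of $A^C$ attached only to stars cannot be placed in $S$ or $T$ without violating the anticompleteness hypotheses. Moreover, if you put $S^C$ into $Q$ (as the paper does, taking $q_0=p_1$ and using the induced paths $s-v_2-v_3-p_i$, $s-v_1-v_0-p_i$), the hypothesis of \ref{3edge}.2 fails for stars, so you only obtain \ref{3edge}.3: components of $R$ are bipartite but may have more than two vertices, with $p_1$ (and $p_2$) complete to one side of any large component. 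Hence your assertion that every component of $X^C\cup Y^C$ has size at most two — which you rely on both to apply \ref{lemma2} and to make your ``relocation'' of vertices with reduced palettes tractable — is not established.

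This is precisely where the paper's proof of \ref{type2} diverges from \ref{type1} and does substantial additional work that your proposal omits: it keeps $S^C$, defines $W^C$ (vertices of $Z^C$ with a neighbor in $S^C$) and $R=X^C\cup Y^C\cup W^C$, uses the bipartition conclusion of \ref{3edge}.3 to precolor the components $R_1$ containing a vertex complete to $\{p_1,p_2\}$ (one side forced to color $1$, the other restricted to $\{2,3\}$), updates the singleton components $R_2$ meeting $N(p_1)\cup N(p_2)$, and only then passes the residue $C'$ to \ref{lemma2}, after the auxiliary claim about $CL^C(0)\cup CL^C(3)$ which guarantees that the relevant sets $A'_{\{1,2\}}$, $A'_{\{1,3\}}$ are empty and $v_0$ is complete to $A'_{\{2,3\}}$. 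Without an argument replacing this machinery, the proposal does not go through, even though the enumeration of triples $(C,p_1,p_2)$, the forced coloring of $V(C)\cup\{p_1,p_2\}$, and the overall $O(|V(G)|^{18})$ bookkeeping are in line with the paper.
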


\begin{proof}

In time $O(|V(G)|^8)$, we can enumerate all triples $(C,p_1,p_2)$ in $G$, 
where $C$ is a 6-gon given by $v_0-v_1-v_2-v_3-v_4-v_5-v_0$, and 
$p_1, p_2 \in P^C(0,3)$. If $G$ has no such triple, then clearly $G$ does not have a type II coloring and we may halt. Hence, we may assume there exists a 6-gon $C$ in $G$ given by $v_0-v_1-v_2-v_3-v_4-v_5-v_0$, and $p_1, p_2 \in P^C(0,3)$. In time $O(|V(G)|^2)$, we can partition $V(G)=V(C)\cup M^C \cup CL^C\cup P^C \cup S^C \cup X^C\cup Y^C\cup Z^C$ as usual. Write $D=V(C) \cup \{p_1,p_2\}$. 
Fix a $3$-coloring $c$ of $G[D]$ such that $c(p_1) \neq c(p_2)$. Since $G$ does not admit a type I coloring,
we may assume that $c(v_0)=c(v_3)=1, c(v_1)=c(v_5)=2, c(v_2)=c(v_4)=3, c(p_1)=2$,
and $c(p_2)=3$.  Define the order 3 palette $L^C_c$ of $G$ as follows: For every $v\in V(G)$, set

$$L^C_c(v) = \begin{cases}
 \{c(v)\} & \text{, \hspace{2ex}if $v\in D$} \\
 \{1,2,3\} & \text{, \hspace{2ex}otherwise} \\
\end{cases}$$

\noindent 
Next, update the vertices in $M^C\cup CL^C\cup (P^C \setminus \{p_1,p_2\})  \cup S^C$ with respect to 
$D$. And so, $|L^C_c(v)| \leq 2$ if and only if  $v \in V(G) \setminus (X^C\cup Y^C\cup Z^C)$. Moreover, for $v \in V(G) \setminus (X^C\cup Y^C\cup Z^C)$, 
$|L(v)|=2$ only if $v \in M^C \cup CL^C(0) \cup CL^C(3) \cup P^C(0,3)$.
By construction, $(G,L^C_c)$ is colorable if and only the $3$-coloring $c$ of $G[D]$ extends to a type II coloring of $G$.

Observe that for every $v \in S^C(0)$ and $i \in \{1,2\}$, $v-v_2-v_3-p_i$
is an induced path in $G$, and for every $v \in S^C(1)$
and  $i \in \{1,2\}$, $v-v_1-v_0-p_i$ is an induced path in $G$. 
Let $W^C$ be the vertices of $Z^C$ with a neighbor in $S^C$. Now by 
\ref{A}.1 and \ref{A}.2, we may apply \ref{3edge} letting 
$P=V(C)$, $Q=M^C \cup CL^C \cup P^C \cup S^C$, $R=X^C \cup Y^C \cup W^C$, 
$S=Z^C \setminus W^C$, $T=\emptyset$, and $q_0=p_1$. 
It follows that  $Z^C \setminus W^C=\emptyset$, that every component of $R$ is bipartite, 
and if some component of $R$ has more than two vertices, then $p_1$ is
complete to at least one side of the bipartition.  Symmetrically,
 if some component of $R$ has more than two vertices, then $p_2$ is
complete to at least one side of the bipartition. 

Let $R_1$ be the union of the components of $R$ that contain a vertex 
complete to $\{p_1,p_2\}$. For every $v \in R_1$ that is 
complete to $\{p_1,p_2\}$, set $L^C_c(v)=\{1\}$. 
For every component $X$ of $R_1$ with $|X|>1$, proceed as follows. Let 
$(A,B)$ be a  bipartition of $G[X]$. By \ref{3edge} and the definition of
$R_1$, it follows that one of $A,B$ is complete to $\{p_1,p_2\}$.
We may assume that  $A$ is complete to $\{p_1,p_2\}$. Therefore
$L^C_c(a)=\{1\}$ for every $a \in A$. Now set
$L^C_c(b)= \{2,3\}$ for every $b \in B$. Note that this does not change the 
colorability of $(G,L^C_c)$. Observe that at this stage $|L^C_c(v)| < 3$ for
every $v \in R_1$, and $|L^C_c(v)|=3$ for every $v \in R \setminus R_1$.

Let $R_2$ be the union of all components  $Y$ of $R \setminus R_1$ such that
$Y=\{y\}$ and  $y$ has a neighbor in $\{p_1,p_2\}$.
For every $v \in R_2$, update the list of $v$ with respect to
$\{p_1,p_2\}$. 

\bigskip

\noindent \textit{(1) Let $v \in CL^C(0) \cup CL^C(3)$ be adjacent to 
$y \in R$. Then each of $p_1, p_2$ has a neighbor in $\{v,y\}$.}

\bigskip

\noindent Suppose not. We may assume that $v \in CL^C(0)$. If
$p_1$ is anticomplete to $\{v,y\}$, then 
$y-v-v_1-v_0-p_1-v_3-v_4$ is a $P_7$ in $G$. This proves that
either $v$ or $y$ is adjacent to $p_1$.  Similarly, either $v$ or $y$ is 
adjacent to $p_2$. This proves \textit{(1)}.
\bigskip

Let $C'=R  \setminus (R_1 \cup R_2)$. Then $|L_c^C(y)|=3$ for every $y \in C'$.
Moreover, no vertex of $C'$ is complete to $\{p_1,p_2\}$, and if $Y$ is a 
component of $C'$ with $|Y|=1$, then $Y$ is anticomplete to $\{p_1,p_2\}$.
Let $A'$ be the set of vertices of $M^C \cup CL^c \cup P^C \cup S^C$ with a 
neighbor in $C'$.
For every non-empty subset $X \subseteq \{1,2,3\}$, define $A'_X=\{a\in A'$ with $L^C_c(a)=X\}$. 

Suppose that  $v\in A' \cap (CL^C(0) \cup CL^C(3))$ has a neighbor $y$  in $C'$.
By~\ref{A}.3, $\{y\}$ is a component of $R$. It follows from the
definition of $C'$ that $y$ is anticomplete to $\{p_1,p_2\}$. Now
(1) implies that  $v$
is complete to $\{p_1,p_2\}$, and, in particular, $L^C_c(v)=\{1\}$. 
Consequently, for $v \in A'$,  $|L^C_c(v)|=2$ if and only if 
$v \in P^C(0,3)$ and $L^C_c(v)=\{2,3\}$.  Thus 
$A'_{\{1,2\}}=A'_{\{1,2\}}=\emptyset$, and $v_0$ is complete to $A_{\{2,3\}}$.

We apply \ref{lemma2} with 

\begin{itemize}

\item $\tilde{A}=A'$, 
\item $\tilde{B}=V(G) \setminus (A' \cup C')$, 
\item $\tilde{C}=C'$, and
\item $Z = \emptyset$.

\end{itemize}

\noindent Let $\mathcal{P}^{C,p_1,p_2}_c$ be the restriction of   $(G,L^C_c, \emptyset)$ of size 
$O(|V(G)|^7)$ thus obtained. By \ref{lemma2}, $\mathcal{P}^{C,p_1,p_2}_c$ can be computed in time $O(|V(G)|^7)$. By \ref{lemma2}\textit{(c)}, we have that $(G,L^C_c, \emptyset)$ (and equivalently $(G,L^C_c)$)  is colorable if and only if $\mathcal{P}^{C,p_1,p_2}_c$ is colorable. Consider 
$(G',L',X') \in \mathcal{P}^{C,p_1,p_2}_c$. Since $|L^C_c(v)| \leq 2$ for all 
$v \in V(G) \setminus V(C')$, by \ref{lemma2}\textit{(b)}, it follows that $|L'(v)|\leq 2$ for all $v\in V(G')$. Since $X'$ has size $O(|V(G)|)$, applying \ref{checkSubsets}, it follows that we can test in time $O(|V(G)|^3)$ if  $(G',L',X')$ is colorable, and extend the coloring to $(G, L^C_c)$ if it is. Therefore,
via $O(|V(G)|^7)$ applications of \ref{checkSubsets}, we can determine 
in time $O(|V(G)|^{10})$ if the $3$-coloring $c$ of $G[D]$ extends to a type II coloring of $G$, and give an explicit type 
II coloring $c'$ of $G$ such that $c'(v)=c(v)$ for all $v\in D$, if one exists. Finally, let $\mathcal{P}$ be the union of $\mathcal{P}^{C,p_1,p_2}_c$ taken over all 
triples $(C,p_1,p_2)$ where $C$ is a 
6-gon given by $v_0-v_1-v_2-v_3-v_4-v_5-v_0$, and $p_1,p_2 \in P^C(0,3)$, and
 all $3$-colorings $c$ of $G[V(C) \cup \{p_1,p_2\}]$ such that $c(p_1)\neq c(p_2)$. Since every type II coloring of $G$ restricts to such a coloring of 
$G[V(C) \cup \{p_1,p_2\}]$ for some $C,p_1,p_2$ , it follows that $G$ has a type II coloring if and only if $\mathcal{P}$ is colorable.
Since there are $O(|V(G)|^8)$ such triples $(C,p_1,p_2)$ in $G$ and $2$ such colorings of 
$G[V(C) \cup \{p_1,p_2\}]$ for each $(C,p_1,p_2)$,   the restriction $\mathcal{P}$ is the union  of $O(|V(G)|^8)$ restrictions $\mathcal{P}^{C,p_1,p_2}_c$. Therefore by the previous argument, we can determine in time $O(|V(G)|^{18})$ if $G$ admits a type II coloring, and construct such a coloring if one exists. This proves \ref{type2}.
\end{proof}

\noindent Now, suppose $(C,p)$ is a shell in $G$. 
We partition $V(G)\setminus (V(C)\cup\{p\})$ as follows:



\begin{itemize}

\item Let $Q^C_p$ be the set of vertices in $V(G)\setminus (V(C) \cup \{p\})$ with a neighbor in $V(C)\cup \{p\}$.

\item Let $R^C_p$ be the set of vertices in $V(G)\setminus (V(C)\cup\{p\}\cup Q^C_p)$ with a neighbor in $Q^C_p$.

\item Let $S^C_p=V(G)\setminus (V(C)\cup\{p\}\cup Q^C_p\cup R^C_p)$.

\item Let $PL^C_p$ be the set of vertices in $V(G)\setminus V(C)$ adjacent to $p$ and anticomplete to $V(C)$. Note, that $PL^C_p$ is a subset of $Q^C_p$.

\end{itemize}

\begin{lemma}\label{B}

Let $G$ be a clean, connected $\{P_7,C_3,C_7\}$-free graph. Then for every shell $(C,p)$ in $G$ the following hold:

\begin{enumerate}

\item $M^C\cup CL^C\cup P^C\cup S^C\cup PL^C_p$ gives a partition of $Q^C_p$.

\item Every vertex of $Q^C_p$ with a neighbor in $R^C_p$ either belongs to $PL^C_p$ or has at least two neighbors in $V(C)$.

\item $S^C_p$ is empty.

\item Every component of $R^C_p$ has size at most two.

\end{enumerate}
\end{lemma}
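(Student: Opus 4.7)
The plan is to derive parts (1) and (2) directly from earlier results of the section, and obtain parts (3) and (4) by a single application of Lemma \ref{3edge}.

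For (1), every vertex of $Q^C_p$ lies in $V(G) \setminus V(C)$ and, by Lemma \ref{C6}, is either a leaf, a clone, a propeller, an even or odd star with respect to $C$, or anticomplete to $V(C)$; a vertex of the last type in $Q^C_p$ must be adjacent to $p$ and hence belongs to $PL^C_p$. The five classes are visibly disjoint. For (2), by definition $R^C_p$ is anticomplete to $V(C) \cup \{p\}$, so $R^C_p \subseteq A^C$; by \ref{A}.1, $M^C$ is anticomplete to $A^C$, so no leaf has a neighbor in $R^C_p$. Hence any vertex of $Q^C_p$ with a neighbor in $R^C_p$ either lies in $PL^C_p$ or belongs to $CL^C \cup P^C \cup S^C$, and in the latter case has at least two neighbors in $V(C)$.

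For (3) and (4) I will apply Lemma \ref{3edge} with $P = V(C) \cup \{p\}$, $Q = Q^C_p$, $R = R^C_p$, $S = S^C_p$, and $T = \emptyset$; by symmetry we may assume $N(p) \cap V(C) = \{v_0, v_3\}$. The partition and the anticompleteness/neighbor hypotheses follow immediately from the definitions of $R^C_p$, $S^C_p$, and $Q^C_p$. If $Q^C_p = \emptyset$ then, since $G$ is connected, $V(G) = V(C) \cup \{p\}$, so $R^C_p = S^C_p = \emptyset$ and (3), (4) hold trivially; assume $Q^C_p \neq \emptyset$. The core task is then to exhibit, for every $q \in Q^C_p$, an induced path $p_1 - p_2 - p_3 - q$ with all of $p_1, p_2, p_3 \in V(C) \cup \{p\}$. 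Taking any such $q$ as $q_0$ supplies the $q_0$-hypothesis of \ref{3edge}, and since the prefix vertices lie entirely in $P$ for every $q$, this also verifies the stronger premise of \ref{3edge}(2) needed for part (4).

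The case analysis is guided by (1). If $q$ is a leaf, clone, or propeller, pick $p_3 \in N(q) \cap V(C)$ and walk two steps around the 6-cycle away from any other anchor of $q$; the resulting induced $P_4$ lies entirely in $V(C)$. If $q$ is a star, then $q$ has a neighbor $v \in \{v_0, v_3\}$, and triangle-freeness together with $pv \in E(G)$ forces $pq \notin E(G)$; writing $\{v'\} = \{v_0,v_3\} \setminus \{v\}$, the path $v' - p - v - q$ is induced because $v'$ is at distance $3$ from $v$ in $C$ and is not an anchor of $q$. If $q \in PL^C_p$ then $q$ is anticomplete to $V(C)$ and $p$ is non-adjacent to $v_1$, so $v_1 - v_0 - p - q$ is induced. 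Once this $P_4$ hypothesis is verified for every $q \in Q^C_p$, Lemma \ref{3edge}(1) delivers (3) and Lemma \ref{3edge}(2) delivers (4). The main obstacle is the star case, where no induced $P_4$ ending at $q$ lives inside $V(C)$ alone, forcing the path through $p$ and relying on triangle-freeness to exclude the edge $pq$; together with the $PL^C_p$ case, this is what makes enlarging $P$ from $V(C)$ to $V(C) \cup \{p\}$ essential.
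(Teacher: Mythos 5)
Your proof is correct and follows essentially the same route as the paper: parts (1) and (2) from \ref{C6} and \ref{A}.1, and parts (3) and (4) by applying \ref{3edge} with $P=V(C)\cup\{p\}$, $Q=Q^C_p$, $R=R^C_p$, $S=S^C_p$, $T=\emptyset$, with the star case handled by an induced path through $p$ exactly as in the paper's auxiliary claim. You are in fact slightly more explicit than the paper, spelling out the induced $P_4$ for vertices of $PL^C_p$ and the degenerate case $Q^C_p=\emptyset$, which the paper leaves implicit.
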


\begin{proof}

%
%
%
%
%

Let $(C,p)$ be a shell in $G$, where $C$ is the 6-gon in $G$ given by $v_0-v_1-v_2-v_3-v_4-v_5-v_0$ and $p\in P^C$. 
By \ref{C6}, it follows that \ref{B}.1 holds. Since $R^C_p$ is a subset of $A^C$, by \ref{A}.1, it follows that $M^C$ is anticomplete to $R^C_p$. And so, by definition, \ref{C6}, and \ref{B}.1, it follows that \ref{B}.2 holds.

\bigskip

\noindent \textit{(1) For every $s\in S^C$, there exists $p_1,p_2,p_3\in V(C)\cup \{p\}$ such that $p_1-p_2-p_3-s$ is an induced path.}

\bigskip

 \noindent Consider a vertex $s\in S^C$. By symmetry, we may assume both $s$ and $p$ are adjacent to $v_0$, that is, that $s\in S^C_0$ and $p\in P^C(\{0,3\})$. Since $G$ is triangle-free, it follows that $s$ is non-adjacent to $p$. Then $v_3-p-v_0-s$ is the desired induced path. This proves \textit{(1)}.
 
\bigskip

\noindent By definition, $S^C_p$ is anticomplete to $V(C)\cup\{p\}\cup Q^C_p$. Since $G$ is clean and connected, by \ref{A}.2 and \textit{(1)}, we may apply \ref{3edge} letting $P=V(C)\cup \{p\}$, $Q=Q^C_p$, $R=R^C_p$, $S=S^C_p$ and $T=\emptyset$. It follows that \ref{B}.3 and \ref{B}.4 hold. This proves \ref{B}.

\end{proof}

\begin{lemma}\label{shell} There is an algorithm with the following specifications:

\bigskip

{\bf Input:} A clean, connected $\{P_7,C_3,C_7\}$-free graph $G$ which contains a shell.

\bigskip

{\bf Output:} A $3$-coloring of $G$, or a determination that none exists.

\bigskip

{\bf Running time:} $O(|V(G)|^{18})$.

\end{lemma}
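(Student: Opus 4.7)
The plan is to reduce the problem to previously established subroutines in three stages. First, apply \ref{type1} at cost $O(|V(G)|^{16})$: if it produces a 3-coloring of $G$, output it. Otherwise apply \ref{type2} at cost $O(|V(G)|^{18})$: if it produces a 3-coloring, output it. Henceforth assume $G$ admits neither a type I nor a type II coloring with respect to any 6-gon. Since $G$ contains a shell, enumerate all shells $(C, p)$ in $G$ in time $O(|V(G)|^7)$, labeling $C$ as $v_0$-$v_1$-$v_2$-$v_3$-$v_4$-$v_5$-$v_0$ with $p \in P^C(\{0,3\})$. By \ref{B}, we have the partition $V(G) = V(C) \cup \{p\} \cup Q^C_p \cup R^C_p$ with $S^C_p$ empty and every component of $R^C_p$ of size at most two.

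For each shell $(C,p)$ and each 3-coloring $c$ of $G[V(C) \cup \{p\}]$—together with a constant-sized enumeration of the forced colors on the propeller classes $P^C(\{1,4\})$ and $P^C(\{2,5\})$, which by the no-type-II assumption (applied after relabeling the 6-gon) must each be monochromatic—initialize an order-3 palette $L^{C,p}_c$: assign singleton palettes to $V(C) \cup \{p\}$ and to every propeller in $P^C(\{0,3\}) \cup P^C(\{1,4\}) \cup P^C(\{2,5\})$ according to the enumerated colors, and $\{1,2,3\}$ elsewhere. Update the palettes of the remaining vertices of $Q^C_p$. By \ref{B}.1 and \ref{B}.2, every remaining vertex of $Q^C_p$ has at least two anchors in $V(C) \cup \{p\}$, so after updating $|L^{C,p}_c(v)| \leq 2$ for every $v \in Q^C_p$, while $|L^{C,p}_c(v)| = 3$ for every $v \in R^C_p$. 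The goal is to apply \ref{lemma2} with $\tilde{C} = R^C_p$, $\tilde{A} = \{v \in Q^C_p : N(v) \cap R^C_p \neq \emptyset\}$, and $\tilde{B} = V(G) \setminus (\tilde{A} \cup \tilde{C})$. Most hypotheses follow immediately from the definitions, \ref{B}, and the cleanness of $G$; in particular the anticompleteness of $\tilde{B}$ to $\tilde{C}$ holds because $R^C_p$ is anticomplete to $V(C) \cup \{p\}$, and $\tilde{A} \cap M^C = \emptyset$ by \ref{A}.1.

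The two delicate hypotheses of \ref{lemma2} are (i) that for every $c_1, c_2 \in \tilde{C}$ and every permutation $(i,j,k)$ of $(1,2,3)$, $N_{\{i,j\}}(c_1) \cap M_{\{i,j\}}(c_2)$ is complete to $M_{\{i,k\}}(c_1) \cap N_{\{i,k\}}(c_2)$; and (ii) that each palette class $\tilde{A}_{\{i,j\}}$ has a vertex of $\tilde{B}$ complete to it. For (i) the strategy mirrors claim (1) in the proof of \ref{7gon}: a non-edge between the two vertex sets, combined with the non-adjacency of $c_1, c_2$ to $V(C) \cup \{p\}$ and the constrained anchor patterns of $\tilde{A}$-vertices (those adjacent to $p$ only, clones, propellers, or stars), yields either an induced $P_7$ through $V(C) \cup \{p\}$ or, if $c_1$ and $c_2$ are adjacent, an induced $C_7$ through $V(C) \cup \{p\}$, contradicting the hypotheses. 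For (ii), each class $\tilde{A}_{\{i,j\}}$ is dominated by some vertex of $V(C) \cup \{p\} \subseteq \tilde{B}$, possibly after further case-splitting within the enumeration when $\tilde{A}_{\{i,j\}}$ contains several different anchor types. Applying \ref{lemma2} yields a restriction $\mathcal{P}^{C,p}_c$ of size $O(|V(G)|^7)$ computable in time $O(|V(G)|^7)$; each triple in it has $|X'|=O(|V(G)|)$ and $|L'(v)| \leq 2$ throughout, so it is tested in time $O(|V(G)|^3)$ via \ref{checkSubsets}. Aggregating over $O(|V(G)|^7)$ shells and a constant number of colorings per shell, $G$ is 3-colorable iff the union $\mathcal{P} = \bigcup_{(C,p),\, c} \mathcal{P}^{C,p}_c$ is colorable, giving $O(|V(G)|^{17})$ for the shell stage; combined with the $O(|V(G)|^{18})$ of the type I/II stage, the overall running time is $O(|V(G)|^{18})$.

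The main obstacle is the verification of (i) and (ii) above. Hypothesis (i) requires a detailed case analysis of the anchor patterns of $\tilde{A}$-vertices attached to pairs of $R^C_p$-vertices, combined with the no-$P_7$, no-$C_7$, no-type-I, and no-type-II assumptions; hypothesis (ii) may require additional branching inside the enumeration of colorings of $V(C) \cup \{p\}$ to guarantee that a single vertex of $\tilde{B}$ dominates each remaining palette class, analogous to the branching used in the proofs of \ref{type1} and \ref{type2}.
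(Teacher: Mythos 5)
Your overall architecture is the same as the paper's: run \ref{type1} and \ref{type2} first, then fix a shell $(C,p)$, branch over the constantly many $3$-colorings of $G[V(C)\cup\{p\}]$, build a palette in which everything outside $R^C_p$ has list size at most two, apply \ref{lemma2} with $\tilde{C}=R^C_p$ and $\tilde{A}$ the vertices of $Q^C_p$ with neighbors in $R^C_p$, and finish with \ref{checkSubsets} (enumerating all shells instead of one is redundant but harmless, and stays within the time bound). However, there is a genuine gap: you explicitly defer the verification of the two extra hypotheses of \ref{lemma2}, and that verification is precisely the mathematical content of this lemma -- it occupies essentially the entire proof in the paper (its claims \textit{(1)} and \textit{(2)}). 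Your sketch for hypothesis (i) (``mirrors claim (1) in the proof of \ref{7gon}: a non-edge yields a $P_7$ or $C_7$ through $V(C)\cup\{p\}$'') understates what is needed: the actual argument is a long case analysis over the anchor types of $q_1,q_2$ (stars, propellers, clones, $PL^C_p$) that, besides $P_7$- and $C_7$-freeness, invokes \ref{B}.1--\ref{B}.2 applied to auxiliary 6-gons such as $q_2\hbox{-}v_1\hbox{-}v_2\hbox{-}v_3\hbox{-}v_4\hbox{-}v_5\hbox{-}q_2$, and it cannot be reduced to paths routed only through $C$.

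The more serious part of the gap is hypothesis (ii). Your proposed fix -- ``each class is dominated by some vertex of $V(C)\cup\{p\}$, possibly after further case-splitting within the enumeration'' -- is not a proof and does not match how this condition can actually be established: what must be shown is a structural adjacency statement (some vertex of $\tilde{B}$ is complete to $A'_{\{i,j\}}$), and extra branching on palettes does not by itself create such a vertex. In the paper this is claim \textit{(2)}, whose proof analyzes the set $K$ of anchors of $A'_{\{i,j\}}$, uses \ref{A}.5, \ref{A}.6, \ref{B}.1, \ref{B}.2, constructs auxiliary 6-gons, and crucially exploits the assumption that $G$ admits no type I coloring, applied to the 6-gons $v_1\hbox{-}v_2\hbox{-}v_3\hbox{-}p\hbox{-}q'\hbox{-}q\hbox{-}v_1$ and $v_4\hbox{-}v_5\hbox{-}v_0\hbox{-}p\hbox{-}q'\hbox{-}q\hbox{-}v_4$ to force contradictory color identities among $v_0,v_2,v_5$. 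Relatedly, the paper restricts the enumeration to colorings $c$ of $G[V(C)\cup\{p\}]$ that are not type I with respect to $C$, because this restriction is used inside that argument; your enumeration does not impose it. Until (i) and (ii) are actually proved, the appeal to \ref{lemma2} is unjustified, so the proposal as written does not establish the lemma.
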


\begin{proof}



By \ref{type1} and \ref{type2} in time $O(|V(G)|^{18})$ we can produce a type I or a type  II coloring of $G$, if one exists. Hence, we may assume there does not exists a type I or a type II coloring of $G$. Let $C$ be the 6-gon in $G$ given by $v_0-v_1-v_2-v_3-v_4-v_5-v_0$, and suppose $(C,p)$ is a shell in $G$. Observe that such an induced subgraph can be found in time $O(|V(G)|^7)$. Since $G$ is clean, by \ref{B}.3, it follows that $S^C_p$ is empty, and so we may partition $V(G)=V(C)\cup \{p\} \cup Q^C_p\cup R^C_p$ as usual. Next, fix a $3$-coloring $c$ of $G[V(C)\cup\{p\}]$, that is not a type I coloring with respect to
$C$. Define the order 3 palette $L^C_c$ of $G$ as follows: For every $v\in V(G)$, set

$$L^C_c(v) = \begin{cases}
 \{c(v)\} & \text{, \hspace{2ex}if $v\in V(C)\cup\{p\}$} \\
 \{c(p)\} & \text{, \hspace{2ex}if $v \not \in V(C)\cup\{p\}$, and $v$ has the 
same anchors as $p$ in $C$}\\
\{1,2,3\} & \text{, \hspace{2ex}otherwise}\\
\end{cases}$$

\noindent Next, update the vertices in $Q^C_p$ with respect to $V(C)\cup \{p\}$. And so, $|L^C_c(v)| \leq 2$ for all $v \in V(G) \setminus R^C_p$, while $|L^C_c(v)|=3$ for all $v\in R^C_p$. Observe that, 
since $G$ does not have a type II coloring, $(G,L^C_c)$ is colorable if and only if the $3$-coloring $c$ of $G[V(C)\cup\{p\}]$ extends to a $3$-coloring of $G$. 

Let $A'$ be the set of vertices in $Q^C_p$ with a neighbor in $R^C_p$, and for every non-empty subset $X \subseteq \{1,2,3\}$, define $A'_X=\{a\in A'$ with $L^C_c(a)=X\}$.

\bigskip

\noindent \textit{(1) Let $\{i,j,k\}= \{1,2,3\}$. If $q_1\in A'_{\{i,j\}}$, $q_2\in A'_{\{j,k\}}$, and $r_1\in R^C_p\cap (N(q_1)\setminus N(q_2))$ and $r_2\in R^C_p\cap (N(q_2)\setminus N(q_1))$, then $q_1$ is adjacent to $q_2$.}

\bigskip

\noindent Suppose $q_1$ is non-adjacent to $q_2$. Since $L(q_1)=\{i,j\}$ and
$L(q_2)=\{j,k\}$, it follows that $N(q_1)\cap N(q_2)\cap (V(C)\cup\{p\})$ is empty. First, assume $q_1\in S^C$. By symmetry, we may assume $q_1\in S^C_0$
and  $p\in P^C(\{0,3\})$. Since $G$ is triangle-free, it follows that $q_1$ is non-adjacent to $p$. Suppose $q_2\in PL^C_p$.  However, if $r_1$ is non-adjacent to $r_2$, then $r_1-q_1-v_2-v_3-p-q_2-r_2$ is a $P_7$ in $G$, and if $r_1$ is adjacent to $r_2$, then $r_1-q_1-v_2-v_3-p-q_2-r_2-r_1$ is a $C_7$ in $G$, in both case a contradiction. Hence, $q_2\notin PL^C_p$. By \ref{B}.2, it follows that $|N(q_2)\cap \{v_1,v_3,v_5\}|\geq 2$. Suppose $q_2$ is adjacent to $v_3$. Since $G$ is triangle-free, it follows that $p$ is non-adjacent to $q_2$. However, if $r_1$ is non-adjacent to $r_2$, then $r_1-q_1-v_0-p-v_3-q_2-r_2$ is a $P_7$ in $G$, and if $r_1$ is adjacent to $r_2$, then $r_1-q_1-v_0-p-v_3-q_2-r_2-r_1$ is a $C_7$ in $G$, in both case a contradiction. Hence, $q_2$ is non-adjacent to $v_3$, and so $q_2$ is complete to $\{v_1,v_5\}$. If $p$ is non-adjacent to $q_2$, then $v_4-v_3-p-v_0-v_1-q_2-r_2$ is a $P_7$ in $G$, a contradiction. Hence, $p$ is adjacent to $q_2$. However, if $r_1$ is non-adjacent to $r_2$, then $r_1-q_1-v_2-v_3-p-q_2-r_2$ is a $P_7$ in $G$, and if $r_1$ is adjacent to $r_2$, then $r_1-q_1-v_2-v_3-p-q_2-r_2-r_1$ is a $C_7$ in $G$, in both case a contradiction. By symmetry, this proves that neither $q_1$ nor $q_2$ belongs to $S^C$. 

Next, suppose $q_1\in P^C$. By symmetry, we may assume $q_1\in P^C(\{0,3\})$.
Suppose first that $q_2$ is adjacent to $v_1$. Since $G$ is triangle-free, it follows that $q_2$ is non-adjacent to $v_2$. However, if $r_1$ is non-adjacent to $r_2$, then $r_1-q_1-v_3-v_2-v_1-q_2-r_2$ is a $P_7$ in $G$, and if $r_1$ is adjacent to $r_2$, then $r_1-q_1-v_3-v_2-v_1-q_2-r_2-r_1$ is a $C_7$ in $G$, in both case a contradiction. By symmetry, it follows that $q_2$ is anticomplete to
$\{v_1,v_2,v_4,v_5\}$. Since  $q_1\in A'_{\{i,j\}}$ and $q_2\in A'_{\{j,k\}}$,
it follows that $q_2 \in PL^C_p$, $c(p) \neq c(v_0)$, and $p$ is
non-adjacent to $q_1$. Since  $|L^C_c(q_1)|=2$, it follows that 
$p  \not \in P^C(\{0,3\})$, and so we may assume that $p \in P^C(\{1,4\})$.
Now, if $r_1$ is non-adjacent to $r_2$, then
$r_2-q_2-p-v_1-v_0-q_1-r_1$ is a $P_7$ in $G$, and if  if $r_1$ is adjacent to 
$r_2$, then $r_2-q_2-p-v_1-v_0-q_1-r_1-r_2$ is a $C_7$ in $G$, in both cases a 
contradiction.
By symmetry, this proves that neither $q_1$ nor $q_2$ belongs to $P^C$.

Since not both $q_1$ and $q_2$ are adjacent to $p$, by \ref{B}.1 and \ref{B}.2, we may assume $q_1\in CL^C$ is non-adjacent to $p$. By symmetry, we may assume $q_1\in CL^C(1)$. Since $r_1-q_1-v_0-v_1-p-v_4-v_3$ is not a $P_7$ in $G$, it follows that $p\notin P^C(\{1,4\})$. And so, we may assume $p\in P^C(\{0,3\})$. Suppose $q_2\in CL^C$ also. Since $N(q_1)\cap N(q_2)\cap V(C)$ is empty, we may assume that $q_2\in CL^C(0)\cup CL^C(4)$. Suppose $q_2\in CL^C(0)$. Let $C'$ be the 6-gon in $G$ given by $q_2-v_1-v_2-v_3-v_4-v_5-q_2$. Then $r_2\in M^{C'}(0)$ and $q_1\in M^{C'}(2)$, contrary to \ref{B}.2. Hence, $q_2\in CL^C(4)$. However, if $r_1$ is non-adjacent to $r_2$, then $r_1-q_1-v_0-p-v_3-q_2-r_2$ is a $P_7$ in $G$, and if $r_1$ is adjacent to $r_2$, then $r_1-q_1-v_0-p-v_3-q_2-r_2-r_1$ is a $C_7$ in $G$, in both case a contradiction. 
Hence, $q_2\notin CL^C$. By \ref{B}.1 and \ref{B}.2, it follows $q_2\in PL^C_p$. 
However, if $r_1$ is non-adjacent to $r_2$, then $r_1-q_1-v_2-v_3-p-q_2-r_2$ is a $P_7$ in $G$, and if $r_1$ is adjacent to $r_2$, then $r_1-q_1-v_2-v_3-p-q_2-r_2-r_1$ is a $C_7$ in $G$, in both case a contradiction. This proves \textit{(1)}.

\bigskip

\noindent \textit{(2) For all distinct $i,j \in \{1,2,3\}$ some vertex of $V(C)\cup \{p\}$ is complete to $A'_{\{i,j\}}$.}

\bigskip

\noindent If $A'_{\{i,j\}}=\emptyset$, then \textit{(2)} trivially holds. Thus, we may assume $A'_{\{i,j\}}\ne\emptyset$. Let $\{i,j,k\}=\{1,2,3\}$ and define $K$ to be the set of vertices of $V(C)\cup \{p\}$ with a neighbor in $A'_{\{i,j\}}$. Since we have updated, it follows that $c(v)=k$ for every $v\in K$. Since $G[V(C)\cup \{p\}]$ has no stable set of size 4, it follows that $|K|\leq 3$. If $|K|=1$, then, by definition, the unique vertex of $K$ is complete to $A'_{\{i,j\}}$. Hence, we may assume that $|K|\geq 2$. If $|K|=2$, then, by \ref{B}.2, either $p\in K$ is complete to $A'_{\{i,j\}}$, or $p\notin K$ and then $K$ is complete to $A'_{\{i,j\}}$. In either case, \textit{(2)} holds. And so we may assume $|K|=3$. By \ref{B}.1 and \ref{B}.2, it follows that $A'_{\{i,j\}}\subseteq CL^C\cup P^C\cup S^C\cup PL^C_p$. First, suppose $p\notin K$. It follows that $PL^C_p\cap A'_{\{i,j\}}$ is empty. By symmetry, we may assume $K=\{v_0,v_2,v_4\}$. Suppose further that $v_0$ and $v_2$ are not complete to $A'_{\{i,j\}}$. By \textit{(1)}, it follows that there exists $c_3\in A'_{\{i,j\}}\cap CL^C(3)$, and $c_5\in A'_{\{i,j\}}\cap CL^C(5)$. Since $G$ is triangle-free, it follows that $c_3$ is non-adjacent to $c_5$. By definition, there exists $r_3,r_5\in R^C_p$ such that $c_3$ is adjacent to $r_3$, and $c_5$ is adjacent to $r_5$. By \ref{A}.6, it follows that $c_3$ is non-adjacent to $r_5$, and $c_5$ is non-adjacent to $r_3$. Let $C'$ be the 6-gon in $G$ given by $v_0-v_1-v_2-c_3-v_4-c_5-v_0$. Then $r_3\in M^{C'}(3)$ and $r_5\in M^{C'}(5)$, contrary to \ref{A}.5. And so, it follows that $p\in K$. 

Without loss of generality, we may assume $p\in P^C(\{0,3\})$. Let $\{a,b\}=K\cap V(C)$. By symmetry, we may assume $a=v_1$ and $b\in \{v_4,v_5\}$. We may also assume $p$ is not complete to $A'_{\{i,j\}}$, as otherwise \textit{(2)} holds immediately. By \ref{B}.1 and \ref{B}.2, it follows that there exists $q\in A'_{\{i,j\}}$ complete to $\{a,b\}$ and non-adjacent to $p$. We may also assume that $a$ is not complete to $A'_{\{i,j\}}$, as otherwise \textit{(2)} holds immediately. And so, by \ref{B}.2, there exists $q'\in PL^C_p\cap A'_{\{i,j\}}$. By definition, there exists $r,r'\in R^C_p$ such that $r$ is adjacent to $q$, and $r'$ is adjacent to $q'$. Suppose $r$ is adjacent to $q'$. Since $G$ is triangle-free, it follows that $q$ is non-adjacent to $q'$. But now
$v_2-v_1-q-r-q'-p-v_3-v_2$ is a $C_7$ in $G$, a contradiction.
Hence, it follows that $r$ is non-adjacent to $q'$, and, by symmetry, that $r'$ is non-adjacent to $q$. Suppose $q$ is non-adjacent to $q'$. If $r$ is non-adjacent to $r'$, then $r-q-v_1-v_0-p-q'-r'$ is a $P_7$ in $G$, and if $r$ is adjacent to $r'$, then $r-q-v_1-v_0-p-q'-r'-r$ is a $C_7$ in $G$, in both cases a contradiction. Hence, $q$ is adjacent to $q'$. Let $C'''$ be the 6-gon in $G$ given by $v_1-v_2-v_3-p-q'-q-v_1$. If $q$ is adjacent to $v_5$ (and therefore not to $v_4$), then $r'-q'-q-v_1-v_2-v_3-v_4$ is a $P_7$ in $G$, a contradiction.
Hence, it follows that $q$ is non-adjacent to $v_5$, that is, that $b=v_4$. 
Since $c$ is not a type I coloring with respect to $C$, and 
since $c(v_1)=c(v_4)=c(p)$, it follows
that $c(v_0)=c(v_2)$, and $c(v_3)=c(v_5)$. Applying the fact that $G$ admits
no type I coloring to the 6-gon
$v_1-v_2-v_3-p-q'-q-v_1$, we deduce that in every coloring $c'$ of $(G,L^C_c)$,
$c'(q)=c(v_2)$  and  $c'(q')=c(v_3)$. However, applying the fact that
$G$ admits no type I coloring with respect to the 6-gon $v_4-v_5-v_0-p-q'-q-v_4$, 
we deduce that in every coloring $c'$ of $(G,L^C_c)$,
$c'(q)=c(v_5)$  and  $c'(q')=c(v_0)$. But this implies that 
$c(v_0)=c(v_2)=c(v_5)$, a contradiction. This proves \textit{(2)}.

\bigskip

\noindent By \ref{B}.4, it follows that every component of $R^C_p$ has at most two vertices. And so, by \ref{B}.1, \ref{B}.2, \textit{(1)} and \textit{(2)}, we can apply \ref{lemma2} with 

\begin{itemize}

\item $\tilde{A}=A'$ 
\item $\tilde{B}=V(C) \cup \{p\}\cup (Q^C_p\setminus A')$, 
\item $\tilde{C}=R^C_p$, and
\item $Z=\emptyset$.
\end{itemize}

\noindent Let $\mathcal{P}_c$ be the restriction of  $(G,L^C_c)$ of size
$O(|V(G)|^7)$ thus obtained, and let  $\mathcal{P}$ be the union of $\mathcal{P}_c$ taken over all $3$-colorings $c$ of $G[V(C) \cup \{p\}]$ that are not type I colorings. By \ref{lemma2}, and since there are at most $7^3$ $3$-colorings of $G[V(C)\cup\{p\}]$, it follows that $\mathcal{P}$ can be computed in time $O(|V(G)|^7)$. By \ref{lemma2}\textit{(c)}, we have that $(G,L^C_c)$ is colorable if and only if $\mathcal{P}_c$ is colorable. 
Since every $3$-coloring of $G$ restricts to a $3$-coloring of $G[V(C) \cup \{p\}]$, it follows that $G$ is 3-colorable if and only if $\mathcal{P}$ is colorable.

Consider  $(G',L',X')\in \mathcal{P}$. Then $(G',L',X')\in \mathcal{P}_c$ 
for some coloring $c$ of $G[V(C)\cup\{p\}]$, and $c$ is not a type I or a type 
II coloring.
Since $|L^C_c(v)| \leq 2$ for all $v \in V(G) \setminus R^C_p$, by \ref{lemma2}\textit{(b)}, it follows that $|L'(v)|\leq 2$ for all $v\in V(G')$. Thus, 
since by \ref{lemma2} $X'$ has size $O(|V(G)|)$, applying \ref{checkSubsets},  
we can test in time $O(|V(G)|^3)$ if $(G',L',X')$ is colorable.
Therefore, via $O(|V(G)|^7)$ applications of \ref{checkSubsets}, we can determine if $\mathcal{P}$ is colorable and extend any coloring of a colorable $(G',L', X') \in \mathcal{P}$ to a coloring of $G$ in linear time. 
Consequently,  in time $O(|V(G)|^{10})$ we can determine if $\mathcal{P}$ is 
colorable. This proves \ref{shell}.

\end{proof}

\section{5-gons}

In this section we show that if a $\{P_7,C_3,C_7,shell\}$-free graph contains a 5-gon, then in polynomial time we can decide if the graph is 3-colorable, and give a coloring if one exists.

Let $C$ be a 5-gon in a graph $G$ given by $v_0-v_1-v_2-v_3-v_4-v_0$. We say that a vertex $v\in V(G)\setminus V(C)$ is:

\begin{itemize}

\item a \textit{leaf at $i$}, if $N(v)\cap V(C)=\{v_{i}\}$ for some $i\in\{0,1,...,4\}$,

\item a \textit{clone at $i$}, if $N(v)\cap V(C)=\{v_{i-1}, v_{i+1}\}$ for some $i\in\{0,1,...,4\}$, where all indices are mod 5.

\end{itemize}

\noindent The following shows how we can partition the vertices of $G$ based on their anchors in $C$.

\begin{lemma}\label{C5}

Let $G$ be a triangle-free graph, and suppose $C$ is a 5-gon in $G$ given by $v_0-v_1-v_2-v_3-v_4-v_0$. If $v\in V(G)\setminus V(C)$, then for some $i\in\{0,1,...,4\}$, either:

\begin{enumerate}
\item $v$ is a leaf at $i$,

\item $v$ is a clone at $i$, or

\item $v$ is anticomplete to $V(C)$.

\end{enumerate}
\end{lemma}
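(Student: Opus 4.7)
The plan is to prove Lemma \ref{C5} by a short case analysis on the structure of $N(v) \cap V(C)$, using only the triangle-freeness of $G$ and the fact that $V(C)$ induces a $C_5$.

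First I would observe that for each $i \in \{0,1,2,3,4\}$, the vertices $v_i$ and $v_{i+1}$ (indices mod $5$) are adjacent in $G$, so since $G$ is $C_3$-free no vertex $v \in V(G)\setminus V(C)$ can be adjacent to both $v_i$ and $v_{i+1}$. In other words, $N(v) \cap V(C)$ is a stable set in the induced subgraph $G[V(C)]$, which is itself the cycle $C_5$. Since $\alpha(C_5) = 2$, the set $N(v) \cap V(C)$ has at most two elements.

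Next I would enumerate the possibilities. If $|N(v) \cap V(C)| = 0$, then $v$ is anticomplete to $V(C)$ and outcome \ref{C5}.3 holds. If $|N(v) \cap V(C)| = 1$, say $N(v) \cap V(C) = \{v_i\}$, then by definition $v$ is a leaf at $i$ and outcome \ref{C5}.1 holds. If $|N(v) \cap V(C)| = 2$, say $N(v) \cap V(C) = \{v_j, v_k\}$ with $j < k$, then $v_j$ and $v_k$ are non-adjacent in $C$, which for a $5$-cycle forces $\{j,k\}$ to be of the form $\{i-1, i+1\}$ for a unique $i \in \{0,1,2,3,4\}$ (all indices mod $5$); hence $v$ is a clone at $i$ and outcome \ref{C5}.2 holds.

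There is no real obstacle here: the entire argument amounts to listing the stable sets of $C_5$ and matching them to the three defined vertex types. The only thing to be slightly careful about is the indexing, in particular that every stable set of size $2$ in $C_5$ is uniquely of the form $\{v_{i-1}, v_{i+1}\}$ for some $i$, which is immediate from the fact that in $C_5$ the non-neighbors of $v_i$ are precisely $v_{i-2}$ and $v_{i+2}$.
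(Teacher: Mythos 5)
Your proof is correct and follows essentially the same route as the paper: both arguments use triangle-freeness to force $N(v)\cap V(C)$ to be a stable set of the 5-cycle (hence of size at most two) and then match the possible cases to the definitions of leaf, clone, and anticomplete vertex. No gaps.
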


\begin{proof}
Consider a vertex $v\in V(G)\setminus V(C)$. If $v$ is anticomplete to $V(C)$, then \ref{C5}.3 holds. Thus, we may assume $N(v)\cap V(C)\ne \emptyset$, and, by symmetry, suppose that $v_0\in N(v)\cap V(C)$. If $|N(v)\cap V(C)|=1$, then \ref{C5}.1 holds, and so we may assume $|N(v)\cap V(C)|\geq 2$. Since $G$ is triangle-free, it follows that $v$ is anticomplete to $\{v_1,v_4\}$. Since $G$ is triangle-free, it follows that $v$ is mixed on $\{v_2,v_3\}$ and so \ref{C5}.2 holds. This proves \ref{C5}.
\end{proof}


Let $G$ be a triangle-free graph. Suppose $C$ is a 5-gon in $G$ given by $v_0-v_1-v_2-v_3-v_4-v_0$. Using \ref{C5} we partition $V(G)\setminus V(C)$ as follows:

\begin{itemize}

\item Let $M^C(i)$ be the set of leaves at $i$ and define $M^C=\bigcup\limits_{i=0}^4M^C(i)$.

\item Let $CL^C(i)$ be the set of clones at $i$ and define $CL^C=\bigcup\limits_{i=0}^4CL^C(i)$.

\item Let $A^C$ be the set of vertices anticomplete to $V(C)$.

\end{itemize}

\bigskip

\noindent By \ref{C5}, it follows that $V(G)=V(C)\cup M^C \cup CL^C\cup A^C$. Furthermore, we partition $A^C=X^C\cup Y^C\cup Z^C$, where

\begin{itemize}

\item $X^C$ is the set of vertices in $A^C$ with a neighbor in $M^C$,

\item $Y^C$ is the set of vertices in $A^C\setminus X^C$ with a neighbor in $CL^C$, and

\item $Z^C=A^C\setminus (X^C\cup Y^C)$.

\end{itemize}

\noindent Finally, we define subsets of $X^C$, $Y^C$ and $M^C$, for every $i\in\{0,...,4\}$ as follows:

\begin{itemize}

\item Let $X^C(i)$ be the set of vertices of $X^C$ with a neighbor in $M^C(i)$. 

\item Let $Y^C(i)$ be the set of vertices of $Y^C$ with a neighbor in $CL^C(i)$.

\item Let $M^C_i$ be the set of vertices of $M^C$ with a neighbor in $X^C(i)$. 

\end{itemize}

\bigskip

\noindent And so, for a given 5-gon $C$ in time $O(|V(G)|^2)$ we obtain the partition $V(C)\cup M^C\cup CL^C \cup X^C\cup Y^C \cup Z^C$ of $V(G)$. Now, we establish several properties of this partition. By definition and \ref{C5}, it follows that:

\begin{lemma}\label{5-1} If $G$ is a triangle-free graph, then for every 5-gon $C$ in $G$ the following hold:

\begin{enumerate}

\item Every vertex in $X^C$ has a neighbor in $M^C$. 

\item Every vertex in $Y^C$ has a neighbor in $CL^C$.

\item $Y^C$ is anticomplete to $M^C$.

\end{enumerate}

\end{lemma}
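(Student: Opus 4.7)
The plan is to observe that all three parts follow almost immediately from the defining partition of $A^C$ into $X^C$, $Y^C$, and $Z^C$, together with the definition of $M^C$ and $CL^C$. I will not need to use the triangle-free hypothesis or the structure of $C$; the partition conventions handle everything.

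For part (1), I will simply invoke the definition: $X^C$ was introduced as the set of vertices in $A^C$ that have a neighbor in $M^C$, so every vertex of $X^C$ has a neighbor in $M^C$ by construction. Part (2) is handled identically: $Y^C$ was defined as the set of vertices in $A^C\setminus X^C$ with a neighbor in $CL^C$, so again the conclusion is immediate from the definition.

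For part (3), I will argue by contradiction. Suppose some $y \in Y^C$ has a neighbor $m \in M^C$. Since $Y^C \subseteq A^C$, we have $y \in A^C$, and by assumption $y$ has a neighbor in $M^C$. But then $y$ satisfies the defining property of $X^C$, so $y \in X^C$. This contradicts the fact that $Y^C \subseteq A^C \setminus X^C$, and the sets $X^C$ and $Y^C$ are disjoint by construction.

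There is essentially no obstacle here; the lemma is a bookkeeping observation that records, in one convenient place, the consequences of how we chose to split $A^C$. Its purpose is to be cited later in the section, so the proof will be only a few lines long.
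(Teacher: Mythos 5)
Your proof is correct and matches the paper's treatment: the paper offers no argument at all, stating only that the lemma holds ``by definition and \ref{C5}'', and your write-up is precisely that definitional unwinding (parts (1) and (2) are the definitions of $X^C$ and $Y^C$, and part (3) follows since a vertex of $A^C$ with a neighbor in $M^C$ lies in $X^C$, which is disjoint from $Y^C$).
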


\noindent Recall that  for a fixed subset $X$ of $V(G)$, we say a vertex \textit{$v\in V(G)\setminus X$ is mixed on an edge of $X$}, if there exist adjacent $x,y\in X$ such that $v$ is mixed on $\{x,y\}$. 

\begin{lemma}\label{mixedAC5}

If $G$ is a $\{P_7,C_3\}$-free graph, then for every 5-gon $C$ in $G$ the following hold:

\begin{enumerate}

\item No vertex in $M^C$ is mixed on an edge of $A^C$.

\item $X^C$ is stable and anticomplete to $Y^C\cup Z^C$.

\item Both $M^C(i)$ and $CL^C(i)$ are stable for every $i\in \{0,...,4\}$.

\end{enumerate}
\end{lemma}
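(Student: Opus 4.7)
My plan is to prove the three parts in the order stated, each by finding a short induced path or a triangle.

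For part 1, I will suppose for contradiction that some $m \in M^C$ is mixed on an edge $\{a,a'\}$ of $A^C$, where $m$ is adjacent to $a$ but not to $a'$. By symmetry we may assume $m \in M^C(0)$, so $N(m)\cap V(C)=\{v_0\}$. Since $a,a' \in A^C$ are anticomplete to $V(C)$, and $a'$ is non-adjacent to $m$, I will verify that $a'-a-m-v_0-v_1-v_2-v_3$ has no chord (the only potential chords are between $a,a'$ and the $v_i$'s, or between $m$ and $v_i$ for $i \geq 1$, all of which are ruled out by the definitions of $A^C$ and $M^C(0)$), yielding a $P_7$ in $G$, which is the contradiction.

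For part 2, let $x \in X^C$. By \ref{5-1}.1, $x$ has a neighbor $m \in M^C$. If $x$ had a neighbor $x' \in A^C$, then since $G$ is triangle-free, $m$ would be non-adjacent to $x'$, and $m$ would be mixed on the edge $\{x,x'\}$ of $A^C$, contradicting part 1. Hence $x$ has no neighbor in $A^C$, and since $X^C, Y^C, Z^C \subseteq A^C$, this gives at once that $X^C$ is stable and anticomplete to $Y^C \cup Z^C$.

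For part 3, both statements are immediate from the fact that $G$ is triangle-free: any two vertices of $M^C(i)$ share the common neighbor $v_i$, and any two vertices of $CL^C(i)$ share the common neighbor $v_{i-1}$ (indeed $v_{i+1}$ as well), so an edge between either pair would produce a triangle.

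I do not anticipate any genuine obstacle; the only place requiring care is verifying the absence of chords in the $P_7$ of part 1, which reduces entirely to the defining properties of the sets $A^C$ and $M^C(i)$ as laid out in the partition preceding the lemma.
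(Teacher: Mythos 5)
Your proof is correct and follows essentially the same route as the paper: the same $P_7$ ($a'-a-m-v_0-v_1-v_2-v_3$, up to relabeling) for part 1, the same reduction of part 2 to part 1 via a neighbor $m\in M^C$ of $x$, and the same common-neighbor/triangle-freeness observation for part 3. No gaps.
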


\begin{proof}

Let $C$ be a 5-gon in $G$ given by $v_0-v_1-v_2-v_3-v_4-v_0$. Suppose for adjacent $a,a'\in A^C$, there exists $m\in M^C$ which is adjacent to $a'$ and non-adjacent to $a$. By symmetry, we may assume $m\in M^C(0)$. However, then $a-a'-m-v_0-v_1-v_2-v_3$ is a $P_7$ in $G$, a contradiction. This proves \ref{mixedAC5}.1.

Consider a vertex $x\in X^C$. By \ref{5-1}.1, there exists $m\in M^C$ adjacent to $x$. If there exists $x'\in N(x)\cap A^C$, then, since $G$ is triangle-free $m$ is non-adjacent to $x'$, and it follows that $m$ is mixed on an edge of $A^C$, contradicting \ref{mixedAC5}.1. This proves \ref{mixedAC5}.2. 

For every $i\in \{0,...,4\}$, by definition $v_i$ is complete to $M^C(i)$ and $v_{i+1}$ is complete to $CL^C(i)$, where all indices are mod 5. Since $G$ is triangle-free, it follows that \ref{mixedAC5}.3 holds. This proves \ref{mixedAC5}.

\end{proof}

\begin{lemma}\label{5X}
Let $G$ be a clean, connected $\{P_7,C_3,C_7,shell\}$-free graph. Then for every 5-gon $C$ in $G$ and $i\in \{0,...,4\}$ the following hold:

\begin{enumerate}

\item $X^C(i)$ is anticomplete to $M^C\setminus M^C(i)$; in other
words, $M^C_i \subseteq M^C(i)$.

\item $X^C(0)\cup ...\cup X^C(4)$ gives a partition of $X^C$.

\item Every vertex in $X^C(i)$ has a neighbor in $CL^C(i)$.

\item $X^C(i)$ is anticomplete to $CL^C\setminus CL^C(i)$. 


\item $Y^C(i)$ is anticomplete to $CL^C(i+1)\cup CL^C(i-1)$, where all indices are mod 5.

\item $M^C_i$ is anticomplete to $V(G)\setminus (M^C_i\cup CL^C(i)\cup X^C(i))$.

\end{enumerate}
\end{lemma}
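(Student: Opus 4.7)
The plan is to establish parts 1--6 in order, with each part's proof following the same general template: I would assume the desired anticompleteness or membership property fails, then exhibit an explicit induced $P_7$, $C_7$, or shell in $G$, contradicting the forbidden-subgraph hypotheses. The cleanness of $G$ (no dominated vertices, no non-trivial homogeneous pair of stable sets) together with triangle-freeness and the previous lemmas \ref{5-1} and \ref{mixedAC5} will be used repeatedly to force the existence of specific neighbors and to exclude short chords.

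For part 1, I would fix $x \in X^C(i)$ with neighbors $m' \in M^C(i)$ and $m \in M^C(j)$ for $j \neq i$, reducing by symmetry to $i = 0$ and $j \in \{1, 2\}$. If $m$ and $m'$ are adjacent, then $\{x, m, m'\}$ is a triangle; otherwise, for $j=1$ the path $m-x-m'-v_0-v_4-v_3-v_2$ is an induced $P_7$, and for $j=2$ the cycle $m-x-m'-v_0-v_4-v_3-v_2-m$ is an induced $C_7$ (note that the obvious candidate $x-m-v_0-v_1-v_2-v_3-v_4$ fails because $v_0v_4$ is a chord, which is why the 5-gon structure is used in this roundabout way). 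Part 2 is immediate from part 1: if $x \in X^C(i) \cap X^C(j)$ with $i \neq j$, then $x$ has a neighbor in $M^C(j) \subseteq M^C \setminus M^C(i)$, contradicting part 1.

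For parts 3 and 4, I would fix $x \in X^C(i)$. By \ref{mixedAC5}.2, $x$ has no neighbors in $Y^C \cup Z^C$ or in $X^C \setminus \{x\}$, and by part 1 its $M^C$ neighbors lie in $M^C(i)$, so the remaining neighbors lie in $CL^C$. Cleanness forces $x$ to have at least one $CL^C$ neighbor, for otherwise $N(x) \subseteq M^C(i) \subseteq N(v_i)$ and $x$ would be dominated by $v_i$. To place this neighbor in $CL^C(i)$, I would suppose $c \in N(x) \cap CL^C(k)$ with $k \neq i$, take $i = 0$, $k \in \{1, 2\}$ by symmetry, and fix $m \in N(x) \cap M^C(0)$. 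For $k = 1$, triangle-freeness at $v_0$ forces $m-c$ to be a non-edge, and then $m-x-c-v_2-v_3-v_4-v_0-m$ is an induced $C_7$. For $k = 2$, if $m-c$ is an edge then $\{x, m, c\}$ is a triangle, and otherwise $m-x-c-v_3-v_4-v_0-m$ is an induced $C_6$ for which $v_1$ acts as a shell vertex (its only neighbors in the hexagon being $c$ and $v_0$, which are opposite). The same case analysis simultaneously proves parts 3 and 4.

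Part 5 follows by a closely parallel $C_6$-plus-shell argument: for $y \in Y^C(i)$ adjacent to $w \in CL^C(i\pm 1)$ and $c \in CL^C(i) \cap N(y)$, triangle-freeness (through $y$) forces $c$ non-adjacent to $w$, and with $i=0$, $w \in CL^C(1)$, the cycle $y-c-v_4-v_3-v_2-w-y$ is an induced $C_6$ with $v_0$ a shell vertex. For part 6, I would consider $m \in M^C_i$, fix a witness $x \in X^C(i) \cap N(m)$, and take $i = 0$; then for each potential neighbor $w$ of $m$, I would check by cases that $w \in \{v_0\} \cup M^C_i \cup CL^C(i) \cup X^C(i)$. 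The $M^C$ case reduces to the part 1 argument applied to $x$; the $CL^C(j)$ cases with $j \in \{1, 4\}$ are immediate from triangle-freeness at $v_0$; and the $CL^C(j)$ cases with $j \in \{2, 3\}$ require invoking the $CL^C(i)$-neighbor $c'$ of $x$ guaranteed by part 3, using a triangle argument when $m-c'$ is an edge and a $C_6$-plus-shell argument (with $v_0$ as shell vertex of the cycle $m-c-v_3-v_4-c'-x-m$) when it is not. The $A^C$ case splits into $X^C$, $Y^C$, $Z^C$ subcases using \ref{5-1}, and only $X^C(i)$ survives. The main obstacle throughout will be identifying, for each potential bad configuration, which induced $C_6$ admits the needed shell vertex; the shell-freeness hypothesis is essential here, since triangle-freeness alone does not exclude the relevant $C_6$'s.
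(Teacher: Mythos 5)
Your parts 1, 2 and 5 are correct and essentially match the paper, but there is a genuine gap in your treatment of parts 3--4. For $i=0$, $k=1$ you claim that $m-x-c-v_2-v_3-v_4-v_0-m$ is an induced $C_7$; it is not, because a clone at $1$ has anchors $v_0$ and $v_2$, so $cv_0$ is a chord (you even used this adjacency a line earlier to get $m$ nonadjacent to $c$). Moreover this case cannot be repaired by a different local structure: the configuration $V(C)\cup\{m,x,c\}$ with $m\in M^C(0)$, $x$ adjacent to $m$ and $c$, and $c\in CL^C(1)$ contains no triangle, no induced $P_7$ or $C_7$, and no shell, so no forbidden-subgraph argument on these vertices alone can exclude a $CL^C(1)$- (or $CL^C(4)$-) neighbor of $x$. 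The paper's route is genuinely needed here: since $M^C(0)\cup CL^C(1)\cup CL^C(4)\subseteq N(v_0)$ and $x$ is nonadjacent to $v_0$, cleanness gives $x$ a neighbor in $CL^C(0)\cup CL^C(2)\cup CL^C(3)$ (your weaker statement ``otherwise $N(x)\subseteq M^C(i)$'' does not rule out all of $x$'s $CL^C$-neighbors lying in $CL^C(1)\cup CL^C(4)$); the shell argument you give for $k=2$ then eliminates $CL^C(2)\cup CL^C(3)$, yielding part 3, and only afterwards, with a $CL^C(0)$-neighbor $c$ of $x$ in hand, can one exclude $CL^C(1)\cup CL^C(4)$ via a shell (exactly the $C_6$-plus-$v_0$ or $v_1$ construction you use in part 5, applied with $z=x$).

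A secondary problem: in part 6 the $M^C$ case does not ``reduce to the part 1 argument applied to $x$.'' There $m'\in M^C\setminus M^C(0)$ is adjacent to $m$, and triangle-freeness forces $m'$ to be nonadjacent to $x$, so part 1 says nothing; one needs new constructions (in the paper, the $P_7$ $x-m-m'-v_1-v_2-v_3-v_4$ when $m'\in M^C(1)$ and the shell $m-m'-v_2-v_3-v_4-v_0-m$ with $v_1$ when $m'\in M^C(2)$). Your remaining part 6 cases are fine, but they, like part 4, rely on part 3, so the gap above propagates.
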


\begin{proof} Let $C$ be a 5-gon in $G$ given by $v_0-v_1-v_2-v_3-v_4-v_5-v_0$. It is enough to prove this statement for $i=0$. Let $x\in X^C(0)$, and let  $m\in M^C(0)$ be  adjacent to $x$. Then $x \in M^C_0$.
By \ref{mixedAC5}.2, $X^C$ is stable and anticomplete to $Y^C\cup Z^C$, and so it follows that $N(x)\subseteq M^C\cup CL^C$. 
Suppose there exists $m'\in N(x)\cap (M^C\setminus M^C_0)$. Since $G$ is triangle-free, $m$ is non-adjacent to $m'$. By symmetry, we may assume $m'\in M^C(3)\cup M^C(4)$. However, if $m'\in M^C(4)$, then $m'-x-m-v_0-v_1-v_2-v_3$ is a $P_7$ in $G$, and if $m'\in M^C(3)$, then $m-x-m'-v_3-v_2-v_1-v_0-m$ is a $C_7$ in $G$, in both cases, a contradiction. Hence, $x$ is anticomplete to $M^C\setminus M^C_0$. This proves \ref{5X}.1, which, by \ref{5-1}.1, immediately implies \ref{5X}.2. Since $v_0$ is complete to $M^C(0)$ and $G$ has no dominated vertices, it follows that there exists $c\in CL^C\setminus (CL^C(1)\cup CL^C(4))$ adjacent to $x$. Since $G$ is triangle-free, $c$ is non-adjacent to $m$. Suppose $c\notin CL^C(0)$. By symmetry, we may assume $c\in CL^C(2)$. However, then $v_0-m-x-c-v_3-v_4-v_0$ with $v_1$ is a shell in $G$, a contradiction. Hence, $x$ has a neighbor in $CL^C(0)$. This proves \ref{5X}.3. Now, we prove \ref{5X}.4 and \ref{5X}.5. We have already shown that $X^C(0)$ is anticomplete to $CL^C(2)\cup CL^C(3)$. Let $c\in CL^C(0)$ be adjacent to $z\in X^C(0)\cup Y^C(0)$. Suppose there exists $c'\in CL^C(1)\cup CL^C(4)$ adjacent to $z$. By symmetry, we may assume $c'\in CL^C(1)$. Since $G$ is triangle-free, $c'$ is non-adjacent to $c$. However, then $c-z-c'-v_2-v_3-v_4-c$ with $v_1$ is a shell in $G$, a contradiction. Hence, $X^C(0)$ is anticomplete to $CL^C\setminus CL^C(0)$, and $Y^C(0)$ is anticomplete to $CL^C(1)\cup CL^C(4)$. This proves \ref{5X}.4 and \ref{5X}.5.

Next we prove \ref{5X}.6. Recall $m$ is an arbitrary vertex of $M^C(0)$, and 
that $x \in X^C(0)$ is adjacent to  $m$ and to $c \in CL^C(0)$.
By definition, \ref{5-1}.3 and \ref{5X}.1, it follows that $M^C_0$ anticomplete to $(X^C\setminus X^C(0))\cup Y^C\cup Z^C$. Suppose there exists $m'\in M^C\setminus M^C_0$ adjacent to $m$. By \ref{mixedAC5}.3, it follows that $M^C(0)$, and thus $M^C_0$, is stable, and so it follows that $m'\in M^C\setminus M^C(0)$. Since $G$ is triangle-free, $x$ is non-adjacent to $m'$. By symmetry, we may assume $m'\in M^C(1)\cup M^C(2)$. However, if $m'\in M^C(1)$, then $x-m-m'-v_1-v_2-v_3-v_4$ is a $P_7$ in $G$, and if $m'\in M^C(2)$, then $m-m'-v_2-v_3-v_4-v_0-m$ with $v_1$ is a shell in $G$, in both cases, a contradiction. Hence, $M^C_0$ is anticomplete to $M^C\setminus M^C_0$. Finally, we show that $M^C_0$ is anticomplete to $CL^C\setminus CL^C(0)$. Since $v_0$ is complete to $M^C(0)$ and $G$ is triangle-free, it follows that $M^C_0$ is anticomplete to $CL^C(1)\cup CL^C(4)$. Suppose there exists $c''\in CL^C(2)\cup CL^C(3)$ adjacent to $m$. Since $G$ is triangle-free, $c''$ is anticomplete to $\{c,x\}$. By symmetry, we may assume $c''\in CL^C(2)$. However, then $x-m-c''-v_3-v_4-c-x$ with $v_0$ is a shell in $G$, a contradiction. Hence, $M^C_0$ is anticomplete to $CL^C\setminus CL^C(0)$. This proves \ref{5X}.6.


\end{proof}

\begin{lemma}\label{5comps}

Let $G$ be a clean, connected $\{P_7,C_3,C_7,S_7\}$-free graph. Then for every 5-gon $C$ in $G$ the following hold:

\begin{enumerate}

\item $Z^C$ is empty.

\item Every component of $Y^C$ has size two.

\item $Y^C(0),...,Y^C(4)$ are pairwise disjoint and anticomplete to 
each other.



\item Every component of $M^C_i\cup X^C(i)\cup Y^C(i)$ has size two for every $i\in\{0,...,4\}$.

\end{enumerate}
\end{lemma}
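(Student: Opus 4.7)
The plan is to derive all four statements using Lemma \ref{3edge} together with cleanness and the prior structural lemmas \ref{5-1}, \ref{mixedAC5}, and \ref{5X}. First I apply \ref{3edge} with $P=V(C)$, $Q=M^C\cup CL^C$, $R=X^C\cup Y^C$, $S=Z^C$, and $T=\emptyset$. The anticompleteness conditions are immediate from the definitions of $A^C$, $X^C$, $Y^C$, $Z^C$ together with \ref{5-1}; each vertex of $R$ has a neighbor in $Q$ by \ref{5-1}.1--2; and for every $q\in Q$ an induced $4$-path with all interior vertices in $V(C)$ exists, namely $v_{i+2}-v_{i+1}-v_{i}-q$ when $q\in M^C(i)$ and $v_{i-3}-v_{i-2}-v_{i-1}-q$ when $q\in CL^C(i)$. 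Thus \ref{3edge}.1 gives $Z^C=\emptyset$, establishing claim 1, while \ref{3edge}.2 yields that every component of $X^C\cup Y^C$ has at most two vertices. Since $X^C$ is stable and anticomplete to $Y^C$ by \ref{mixedAC5}.2, each such component lies entirely in $X^C$ (as a singleton) or entirely in $Y^C$, so every component of $Y^C$ has size at most two.

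To prove claim 3, suppose $y\in Y^C(i)\cap Y^C(j)$ with $i\ne j$. By \ref{5X}.5, $j\equiv i\pm 2\pmod 5$, and after relabeling I may take $i=0$ and $j=2$. Applying \ref{5X}.5 to both $Y^C(0)$ and $Y^C(2)$, together with \ref{mixedAC5}.2, \ref{5-1}.3, and claim 1, gives $N(y)\subseteq (CL^C(0)\cup CL^C(2))\cup Y^C$. Since each clone at $0$ and each clone at $2$ is adjacent to $v_1$ while $v_1\not\sim y$, we get $N(y)\setminus Y^C\subseteq N(v_1)$. If $y$ has no $Y^C$-neighbor, then $N(y)\subseteq N(v_1)$ and $y$ is dominated by $v_1$, contradicting cleanness. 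Otherwise, by the size bound from the first \ref{3edge} application, $y$ has a unique $Y^C$-neighbor $y'$; applying the same analysis to $y'$, combined with a careful non-trivial homogeneous pair of stable sets argument exploiting the fact that $v_1$ dominates the union of the non-$Y^C$-neighborhoods of $\{y,y'\}$, yields a further cleanness violation. The anticomplete part of claim 3 is handled by an analogous dominance argument applied to an alleged edge between distinct $Y^C(i)$ and $Y^C(j)$.

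Claim 2 then follows immediately: if $y\in Y^C(i)$ has no $Y^C$-neighbor, then by claim 3 we have $N(y)\cap CL^C\subseteq CL^C(i)\subseteq N(v_{i-1})$, so $y$ is dominated by $v_{i-1}$, a contradiction; combined with the size-at-most-two bound, every component of $Y^C$ has size exactly two. For claim 4, observe that there are no edges between $M^C_i\cup X^C(i)$ and $Y^C(i)$ inside $M^C_i\cup X^C(i)\cup Y^C(i)$ (by \ref{5-1}.3 and \ref{mixedAC5}.2), so every component lies in one part or the other. The $Y^C(i)$ part has components of size exactly two by claims 2 and 3. For the $M^C_i\cup X^C(i)$ part, apply \ref{3edge} a second time with $P=V(C)\setminus\{v_i\}$, $Q=\{v_i\}\cup CL^C(i)$, $R=M^C_i\cup X^C(i)$, $S=\emptyset$, and $T=V(G)\setminus(P\cup Q\cup R)$; the needed anticompleteness follows from \ref{5X}.1, \ref{5X}.4, \ref{5X}.6, and \ref{mixedAC5}.2, every vertex of $R$ has a neighbor in $Q$ (using \ref{5X}.3 for $X^C(i)$), and induced paths $v_{i+3}-v_{i+2}-v_{i+1}-v_i$ (for $q=v_i$) and $v_{i+3}-v_{i+2}-v_{i+1}-q$ (for $q\in CL^C(i)$) confirm the strong path condition. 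Then \ref{3edge}.2 bounds components by two vertices, and since every $M^C_i$-vertex has a neighbor in $X^C(i)$ and vice versa by definition, each such component has exactly two vertices.

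The main obstacle is the case in claim 3 where $y\in Y^C(0)\cap Y^C(2)$ has a $Y^C$-neighbor $y'$: direct domination by $v_1$ fails because $v_1\not\sim y'$, so one must leverage cleanness either via a non-trivial homogeneous pair of stable sets built around $\{y,y'\}$ and the common neighborhood structure of $\{y,y'\}$ in $CL^C(0)\cup CL^C(2)$, or via a more delicate search for a shell, a $P_7$, or a $C_7$ in the rich local configuration produced by the two clones $c\in CL^C(0)\cap N(y)$ and $c'\in CL^C(2)\cap N(y)$.
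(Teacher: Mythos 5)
Your overall skeleton matches the paper's: one application of \ref{3edge} (with $R\supseteq Y^C$, $S=Z^C$) to get claim 1 and the size-at-most-two bound, cleanness/domination arguments to rule out singletons, and a second application of \ref{3edge} with $P=V(C)\setminus\{v_i\}$, $Q=\{v_i\}\cup CL^C(i)$, $R=M^C_i\cup X^C(i)$ for claim 4; those parts, including the verification of the hypotheses of \ref{3edge}, are fine. The problem is claim 3, which you do not actually prove: you reduce it to the case where $y\in Y^C(0)\cap Y^C(2)$ has a $Y^C$-neighbor $y'$ (and, separately, to an alleged edge between distinct $Y^C(i)$ and $Y^C(j)$), and there you only gesture at ``a careful non-trivial homogeneous pair of stable sets argument'' or ``a more delicate search for a shell, a $P_7$, or a $C_7$'', explicitly flagging this as the main obstacle. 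That is a genuine gap: domination by $v_1$ indeed fails because $v_1\not\sim y'$, a pair built from $\{y\}$ and $\{y'\}$ is not non-trivial, and ``the same analysis applied to $y'$'' does not constrain $N(y')$ to lie in $N(v_1)$ (if $y'\in Y^C(k)$, \ref{5X}.5 only excludes $CL^C(k\pm1)$). Since your proofs of claims 2 and 4 are routed through claim 3, the gap propagates to them as well (though claim 2 can be proved independently, as the paper does, by two domination arguments against $v_1$ and $v_4$ followed by \ref{5X}.5).

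The missing idea in the paper is a re-anchoring trick rather than a direct cleanness violation: for a component $\{y,y'\}$ of $Y^C$ with $y$ adjacent to $c\in CL^C(0)$, replace $v_0$ by $c$ to form the 5-gon $C'$ given by $c-v_1-v_2-v_3-v_4-c$. Since $G$ is triangle-free, $y'$ is non-adjacent to $c$, so $y\in M^{C'}(0)$ and $y'\in X^{C'}(0)$, while $CL^C(j)=CL^{C'}(j)$ for $j=0,2,3$. Applying \ref{5X}.3 and \ref{5X}.4 to $C'$ forces $y'$ to have a neighbor in $CL^C(0)$ and to be anticomplete to $CL^C(2)\cup CL^C(3)$; combined with \ref{5X}.5 for $C$ this gives $y'$ anticomplete to $CL^C\setminus CL^C(0)$, and swapping the roles of $y$ and $y'$ yields that all clone-neighbors of the component lie in a single $CL^C(i)$. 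This single statement simultaneously disposes of both of your problem cases (a vertex in two $Y^C(i)$'s, and an edge between distinct $Y^C(i)$ and $Y^C(j)$), which is exactly what your proposal is missing.
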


\begin{proof}

Let $C$ be a 5-gon in $G$ given by $v_0-v_1-v_2-v_3-v_4-v_5-v_0$. 

\bigskip

\noindent \textit{(1) For every $c\in CL^C$, there exists $b_1,b_2,b_3\in V(C)$ such that $b_1-b_2-b_3-c$ is an induced path.}

\bigskip

 \noindent Consider a vertex $c\in CL^C$. By symmetry, we may assume $c\in CL^C(0)$. And so, $v_3-v_2-v_1-c$ is the desired induced path. This proves \textit{(1)}.
 
\bigskip






\noindent By \ref{5-1}, \ref{mixedAC5}.2 and \textit{(1)}, we may apply \ref{3edge} letting $P=V(C)$, $Q=CL^C$, $R=Y^C$, $S=Z^C$, and $T=M^C\cup X^C$. It thus follows that \ref{5comps}.1 holds and that every component of $Y^C$ has size at most two.
Now, suppose $y \in Y^C$ is a singleton component of $Y^C$. By \ref{5-1}.2, there exists $c\in CL^C$ adjacent to $y$. By symmetry, we may assume $c\in CL^C(0)$, and so $y\in Y^C(0)$. By \ref{5X}.5, $y$ is anticomplete to $CL^C(1)\cup CL^C(4)$, and so, by \ref{5-1}.3, it follows that $y$ is anticomplete to $V(G)\setminus (CL^C(0)\cup CL^C(2)\cup CL^C(3))$. Since $v_{1}$ does not dominate $y$, it follows that $y$ has a neighbor in $CL^C(3)$. Since $v_{4}$ does not dominate $y$, it follows that $y$ has a neighbor in $CL^C(2)$. However, then $y$ has a neighbor in $CL^C(2)$ and in $CL^C(3)$, contrary to \ref{5X}.5. This proves \ref{5comps}.2.

\bigskip

\noindent \textit{(2) If $\{y,y'\}$ is the vertex set of a component of $Y^C$, then there exists a unique $i\in \{0,...,4\}$ such that vertices of $CL^C(i)$ have neighbors in $\{y,y'\}$.}

\bigskip

\noindent Suppose $\{y,y'\}$ is the vertex set of a component of $Y^C$. By \ref{5-1}.2, there exists $c\in CL^C$ adjacent to $y$. Since $G$ is triangle-free, $y'$ is non-adjacent to $c$. By symmetry, we may assume $c\in CL^C(0)$. Let $C'$ be the 5-gon given by $c-v_1-v_2-v_3-v_4-c$. It follows that $y\in M^{C'}(0)$, $y'\in X^{C'}(0)$ and, since $G$ is triangle-free, $CL^C(j)=CL^{C'}(j)$ for $j=0,2,3$. And so, by \ref{5X}.4 applied to $C'$, it follows that $y'$ has a neighbor in $CL^{C'}(0)=CL^C(0)$ and is anticomplete to $CL^C(2)\cup CL^C(3)$. In particular,
$y' \in Y^C(0)$. By \ref{5X}.5 applied to $C$, it follows that $y'$ is anticomplete to $CL^C(1)\cup CL^C(4)$. And so, it follows that $y'$ is anticomplete to $CL^C\setminus CL^C(0)$. But now reversing the roles of $y$ and $y'$, it follows that $y$ is anticomplete to $CL^C \setminus CL^C(0)$. This proves \textit{(2)}.

\bigskip 

\noindent By \ref{5-1}.2, every vertex $y\in Y^C$ has a neighbor in $CL^C$ and so \textit{(2)} implies that $Y^C(0), ...,Y^C(4)$ are pairwise disjoint and 
anticomplete to each other. This proves \ref{5comps}.3.

\bigskip

\noindent \textit{(3) Every component of $M^C_i\cup X^C(i)$ has size two for every $i\in\{0,...,4\}$.}

\bigskip

\noindent We may assume $i=0$. By \ref{mixedAC5}.3, it follows that $M^C(0)$, and thus $M^C_0$, is stable. 
By definition, \ref{5X}.3 and \ref{5X}.4, it follows that every vertex in $M^C_0\cup X^C(0)$ has a neighbor in $CL^C(0)\cup\{v_0\}$. By \ref{mixedAC5}.2, \ref{5X}.1, \ref{5X}.2 and \ref{5X}.4, it follows that $X^C(0)$ is anticomplete to $V(G)\setminus (X^C(0) \cup M^C_0\cup CL^C(0))$. By \ref{5X}.6, it follows that $M^C_0$ is anticomplete to $V(G)\setminus (M^C_0\cup CL^C(0)\cup X^C(0))$. Since $v_0-v_1-v_2-v_3$ is an induced path, by~\textit{(1)}, we may apply \ref{3edge} letting $P=V(C)\setminus \{v_0\}$, $Q=CL^C(0)\cup \{v_0\}$, $R=M^C_0\cup X^C(0)$, $S=\emptyset$, and $T=V(G)\setminus (V(C)\cup M^C_0\cup CL^C(0)\cup X^C(0))$. Hence, every component of $M^C_0\cup X^C$ has size at most two. However, since every vertex in 
$M^C_0$ has a neighbor in $X^C(0)$ and every vertex in $X^C(0)$ has a neighbor in $M^C_0$, it follows that \textit{(3)} holds.

\bigskip

\noindent By \ref{5-1}.3 and \ref{mixedAC5}.2, it follows that $M^C\cup X^C$ is anticomplete to $Y^C$. Hence, together \ref{5comps}.2 and \textit{(3)} imply that \ref{5comps}.4 holds. This proves \ref{5comps}.












\end{proof}

\noindent Now, we prove the main result of the section.

\begin{lemma}\label{5gon} There is an algorithm with the following specifications:

\bigskip

{\bf Input:} A clean, connected $\{P_7,C_3,C_7,shell\}$-free graph $G$ which contains a 5-gon.

\bigskip

{\bf Output:} A $3$-coloring of $G$, or a determination that none exists.

\bigskip

{\bf Running time:} $O(|V(G)|^6)$.

\end{lemma}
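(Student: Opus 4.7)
The plan is to find a 5-gon $C$ in $G$ (which exists by hypothesis) in time $O(|V(G)|^5)$, then use the structural description of $G$ with respect to $C$ provided by \ref{5X} and \ref{5comps} (in particular, by \ref{5comps}.1 we have $Z^C=\emptyset$, giving a partition $V(G)=V(C)\cup M^C\cup CL^C\cup X^C\cup Y^C$) to enumerate and test $O(|V(G)|^4)$ candidate palettes per 3-coloring $c$ of $G[V(C)]$. For each such $c$, define $L^C_c(v)=\{c(v)\}$ on $V(C)$ and $\{1,2,3\}$ elsewhere and update with respect to $V(C)$, so that only vertices of $X^C\cup Y^C$ retain palettes of size 3. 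Then $(G,L^C_c)$ is colorable iff $c$ extends to a 3-coloring of $G$; since there are at most $3^5$ choices of $c$, it suffices to handle each in time $O(|V(G)|^6)$.

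Because $C$ is an odd 5-cycle, every proper $c$ has a unique \emph{solitary} vertex $v_j$ and exactly two \emph{ambiguous} indices $i$ (namely the two indices at distance $2$ from $j$) at which $c(v_{i-1})=c(v_{i+1})$. For each of the three non-ambiguous indices, $CL^C(i)$ is forced to a single color, and a further updating of $X^C(i)\cup Y^C(i)$ with respect to $CL^C(i)$ (legal because each vertex of $X^C(i)\cup Y^C(i)$ has a neighbor in $CL^C(i)$, using \ref{5X}.3 and the proof of \ref{5comps}.2) brings those palettes down to size at most 2. Thus after this round of updating, only the vertices of $X^C(i)\cup Y^C(i)$ at the two ambiguous indices still carry palettes of size 3.

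To reduce these remaining palettes, for each of the two ambiguous indices $i$ we apply \ref{lemma1} to the induced subgraph $G^*_i = G[\{v_{i-1},v_i\}\cup CL^C(i)\cup M^C_i\cup X^C(i)\cup Y^C(i)]$ with $x = v_{i-1}$, $S = \{v_i\}\cup CL^C(i)$, and $\hat A, \hat B$ obtained from the size-two matching components of $M^C_i\cup X^C(i)\cup Y^C(i)$ (placing the $M^C_i$ endpoints in $\hat A$, the $X^C(i)$ endpoints in $\hat B$, and orienting each $Y^C(i)$ pair arbitrarily). The required hypotheses follow from the structural lemmas: \ref{mixedAC5}.2, \ref{5X}.1, \ref{5X}.3--6 and \ref{5comps}.3--4 imply that in $G^*_i$ the neighborhood of $v_{i-1}$ is exactly $S$, that $S$ is stable with palette contained in $\{2,3\}$ after renaming colors so that $c(v_{i-1})=1$, that each vertex of $\hat A\cup\hat B$ has a neighbor in $S$, and that $\hat A, \hat B$ exhibit the prescribed matching. \ref{lemma1} then yields $O(|V(G)|^2)$ sub-palettes on $V(G^*_i)$ in time $O(|V(G)|^4)$; we lift each to a sub-palette of $L^C_c$ on $G$ by leaving it unchanged off $V(G^*_i)$. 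The product over the two ambiguous indices gives $O(|V(G)|^4)$ candidate palettes $L'$ on $G$, each with $|L'(v)|\le 2$ for every $v$.

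Finally, we invoke \ref{checkSubsets} on each candidate $L'$ in time $O(|V(G)|^2)$. A short lifting argument (any coloring of $(G,L^C_c)$ restricts to a coloring of $(G^*_i,L^C_c|_{V(G^*_i)})$, which by \ref{lemma1} lies in one of the enumerated sub-palettes of $G^*_i$, and the lifted candidate palette on $G$ coincides with $L^C_c$ off $V(G^*_i)$) shows that $(G,L^C_c)$ is colorable if and only if at least one lifted candidate is. Over all $O(1)$ colorings $c$ the total cost is $O(|V(G)|^5)+O(|V(G)|^4)\cdot O(|V(G)|^2) = O(|V(G)|^6)$. The main obstacle will be justifying that \ref{lemma1} can be applied to the induced subgraph $G^*_i$ rather than to $G$ itself (since $v_{i-1}$ has many neighbors in $G$ outside of $S$) and verifying that the lifting preserves colorability equivalence; this works because every vertex whose palette is actually reduced lies in $V(G^*_i)$, so extending by $L^C_c$ off $V(G^*_i)$ exactly matches the coloring constraints on the rest of $G$.
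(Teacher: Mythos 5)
Your setup (finding $C$, the partition with $Z^C=\emptyset$, updating so that only $X^C(i)\cup Y^C(i)$ at the two ``ambiguous'' indices keep lists of size $3$, and then two applications of \ref{lemma1} with $S$ a clone class plus its $C$-vertex and $\hat A\cup\hat B$ the size-two components of $M^C_i\cup X^C(i)\cup Y^C(i)$, followed by \ref{checkSubsets}) is essentially the paper's argument, and your structural verifications via \ref{mixedAC5}, \ref{5X} and \ref{5comps} are the right ones. The genuine problem is the step you yourself flag as the main obstacle: applying \ref{lemma1} to the induced subgraph $G^*_i$ and then ``lifting''. The equivalence you need -- $(G,L^C_c)$ is colorable iff some lifted candidate is colorable -- does not follow from \ref{lemma1} as stated. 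Part (b) of \ref{lemma1} only guarantees that $(G^*_i,L,X)$ is colorable iff some $(G^*_i,L',X)$ is, together with the reverse containment of colorings; it does not say that \emph{every} coloring of $(G^*_i,L,X)$ is a coloring of $(G^*_i,L',X)$ for some $L'\in\mathcal L$. Your justification (``every vertex whose palette is actually reduced lies in $V(G^*_i)$'') is not enough: given a global coloring $\kappa$ of $(G,L^C_c)$, Lemma \ref{lemma1} applied to $G^*_i$ only produces \emph{some} coloring of $(G^*_i,L')$, and that coloring may disagree with $\kappa$ on $S=CL^C(i)\cup\{v_i\}$ (whose lists have size $2$, and whose vertices have many neighbors outside $G^*_i$, e.g.\ $v_{i+1}$, other clone classes, leaves), so it cannot be spliced with $\kappa$ on the rest of $G$; in general such a ``restrict to a subgraph, apply a colorability-preserving palette family, extend by the old palette'' scheme is false (a one-vertex subgraph with $L(u)=\{1,2\}$, $\mathcal L=\{L'\}$, $L'(u)=\{1\}$, and an outside neighbor forced to colour $1$ already defeats it). The same issue affects taking the ``product'' over the two ambiguous indices.

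The gap is repairable, and in two ways. Either invoke the stronger fact established inside the proof of \ref{lemma1} (each of (4b), (5b), (6b) and the palette $\hat L$ are proved by showing that the given coloring itself lies in one of the constructed subpalettes, so every coloring of $(G,L,X)$ is a coloring of $(G,L',X)$ for some $L'\in\mathcal L$); with that, $\kappa$ restricted to $G^*_i$ lies in some $L'$, and your lifting and product both go through. Or do what the paper does: apply \ref{lemma1} to the whole graph $G$, taking $Y$ to be everything outside $\{x\}\cup S\cup\hat A\cup\hat B$, and nest the second application inside each palette returned by the first, so the iff composes with no lifting at all. Your reason for retreating to $G^*_i$ -- that $v_{i-1}$ has neighbors outside $S$, violating the stated hypothesis that $x$ be anticomplete to $Y$ -- is a fair reading of the statement of \ref{lemma1}, but that hypothesis is never used in its proof (only completeness to $S$ and anticompleteness to $\hat A\cup\hat B$ matter), and the paper's own applications, including in \ref{5gon} and \ref{lemma2}, take $Y$ to contain neighbors of $x$. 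As written, though, the colorability equivalence at the heart of your reduction is not justified.
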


\begin{proof}

Let $C$ be a 5-gon in $G$ given by $v_0-v_1-v_2-v_3-v_4-v_0$; clearly $C$ can be found in time $O(|V(G)|^5)$. In time $O(|V(G)|^2)$, we can partition $V(G)=V(C)\cup CL^C \cup M^C\cup X^C \cup Y^C\cup Z^C$ as well as determine $X^C(i),Y^C(i)$ and $M^C_i$ for every $i\in \{0,...,4\}$. Since $G$ is clean, by \ref{5comps}.1, it follows that $Z^C$ is empty and, by \ref{5X}.2 and \ref{5comps}.3, we obtain the partitions $X^C(0)\cup ...\cup X^C(4)$ of $X^C$ and $Y^C(0)\cup ...\cup Y^C(4)$ of $Y^C$. 
Next, fix a $3$-coloring $c$ of $G[V(C)]$. By symmetry, we may assume $c(v_1)=c(v_3)$ and $c(v_2)=c(v_4)$. Define the order 3 palette $L^C_c$ of $G$ as follows:

$$L^C_c(v) = \begin{cases}
 \{c(v)\} & \text{, \hspace{2ex}if $v\in V(C)$} \\
 \{1,2,3\} & \text{, \hspace{2ex}otherwise}\\
\end{cases}$$

\noindent  Next, update the vertices in $CL^C\cup M^C$ with respect to $V(C)$. And so, $|L^C_c(v)| \leq 2$ for all $v \in V(G) \setminus (X^C\cup Y^C)$. Furthermore, $|L^C_c(v)|=2$ if and only if $v\in M^C\cup CL^C(2)\cup CL^C(3)$. Now, update the vertices in $X^C\cup Y^C$ with respect to $CL^C\cup M^C$. By \ref{5X}.3, it follows that $|L^C_c(v)|=3$ if and only if $v\in X^C(2)\cup Y^C(2)\cup X^C(3)\cup Y^C(3)$. 
For every $j\in\{2,3\}$, by \ref{5X}.3, every vertex of $M_j\cup X^C(j)\cup Y^C(j)$ has a neighbor in $CL^C(j)\cup \{v_j\}$ and, by \ref{5comps}.4, every component of $M_j\cup X^C(j)\cup Y^C(j)$ has size 2. Let $\mathcal{L}_1$ be the set of $O(|V(G)|^2)$ subpalettes of $L^C_c$ obtained from \ref{lemma1} applied with






\begin{itemize}

\item $x=v_1$, 
\item $S=CL^C(2)\cup\{v_2\}$, 
\item $\hat{A}\cup \hat{B}=M^C_2\cup X^C(2)\cup Y^C(2)$, 
\item $Y=V(G)\setminus (\{v_1,v_2\}\cup CL^C(2)\cup M_2\cup X^C(2)\cup Y^C(2))$,
and
\item $X= \emptyset$.

\end{itemize}

\noindent Next, for every $L\in \mathcal{L}_1$, let $\mathcal{L}(L)$ be the set of $O(|V(G)|^2)$ subpalettes of $L$ obtained from \ref{lemma1} applied with

\begin{itemize}
\item $x=v_4$, 
\item $S=CL^C(3)\cup\{v_3\}$, 
\item $\hat{A}\cup \hat{B}=M^C_3\cup X^C(3)\cup Y^C(3)$, and 
\item $Y=V(G)\setminus (\{v_3,v_4\}\cup CL^C(3)\cup M_3\cup X^C(3)\cup Y^C(3))$,
and 
\item $X=\emptyset$.
\end{itemize}

\noindent Finally, let $\mathcal{L}_c=\{\mathcal{L}(L):L\in\mathcal{L}_1\}$ be the set of $O(|V(G)|^4)$ subpalettes of $L^C_c$ thus obtained. By \ref{lemma1}, $\mathcal{L}_c$ can be computed in time $O(|V(G)|^6)$. Since
$X=\emptyset$, by \ref{lemma1}\textit{(b)}, we have that $(G,L^C_c)$ is colorable if and only if $(G,\mathcal{L}_c)$ is colorable. Let $\mathcal{L}$ be the union of the sets $\mathcal{L}_c$ taken over all $3$-colorings $c$ of $G[V(C)]$. Then $G$ is $3$-colorable if and only if
$(G, \mathcal{L})$ is $3$-colorable. 
Since $|L^C_c(v)| \leq 2$ for all $v \in V(G) \setminus (X^C(2)\cup Y^C(2)\cup X^C(3)\cup Y^C(3))$, by \ref{lemma1}\textit{(a)}, it follows that $|L(v)|\leq 2$ for all $L\in \mathcal{L}$ and $v\in V(G)$.  Thus, by~\ref{check}  we can test in time $O(|V(G)|^2)$ if $(G,L)$ is colorable for every $L\in\mathcal{L}$. Since there are at most $5^3$ $3$-colorings of $G[V(C)]$,  it follows that $\mathcal{L}$ consists of $O(|V(G)|^4)$ subpalettes of $L^C_c$, and so, 
via $O(|V(G)|^4)$ applications of \ref{check}, we can determine if $(G,\mathcal{L})$ is colorable. That is, in time $O(|V(G)|^6)$ we can determine if there exists a  $3$-coloring $c$ of $G[V(C)]$ that extends to a $3$-coloring of $G$, and give an explicit $3$-coloring $c'$ of $G$ such that $c'(v)=c(v)$ for all $v\in V(C)$, if one exists.  Since every $3$-coloring of $G$ restricts to a 
$3$-coloring of $G[V(C)]$, this proves \ref{5gon}.

\end{proof}
\section{Main Result}

In this section we prove the main result of this paper \ref{half1}, which we restate:

\begin{lemma}\label{main} There is an algorithm with the following specifications:

\bigskip

{\bf Input:} A $\{P_7,C_3\}$-free graph $G$.

\bigskip

{\bf Output:} A $3$-coloring of $G$, or a determination that none exists.

\bigskip

{\bf Running time:} $O(|V(G)|^{18})$.

\end{lemma}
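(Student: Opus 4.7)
The plan is to follow the outline described at the end of the introduction, assembling the pieces \ref{cleaning}, \ref{7gon}, \ref{shell}, \ref{5gon}, and a final bipartite fallback. First I would reduce to a connected input: given $G$, compute its components in linear time and handle each separately, since $G$ is $3$-colorable iff every component is. So I may assume $G$ is connected $\{P_7,C_3\}$-free. Next I apply \ref{cleaning} to $G$ in time $O(|V(G)|^5)$ to produce a clean induced subgraph $G'$ which is $3$-colorable iff $G$ is, and from any $3$-coloring of $G'$ I can reconstruct a $3$-coloring of $G$ in linear time by \ref{dominated} and \ref{pair}. Note that $G'$ remains $\{P_7,C_3\}$-free, and its components are still clean; I run the rest of the algorithm on each component of $G'$.

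Next I exhaust cycles of forbidden lengths. Since any $C_k$ with $k\ge 8$ contains a $P_7$, the graph $G'$ is already $C_k$-free for all $k\ge 8$. I first test whether $G'$ contains a 7-gon; this can be done in time $O(|V(G')|^7)$ by brute enumeration. If it does, I invoke \ref{7gon}, which in time $O(|V(G')|^{10})$ either outputs a $3$-coloring of $G'$ or certifies that none exists. Otherwise $G'$ is $\{P_7,C_3,C_7\}$-free, and I search for a shell $(C,p)$, which exists iff $G'$ contains one and can be found (or ruled out) in time $O(|V(G')|^7)$. If a shell is present, I invoke \ref{shell} in time $O(|V(G')|^{18})$ to either $3$-color $G'$ or determine that it is not $3$-colorable. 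Otherwise $G'$ is $\{P_7,C_3,C_7,\text{shell}\}$-free, and I test in time $O(|V(G')|^5)$ whether $G'$ contains a 5-gon; if so, I apply \ref{5gon} in time $O(|V(G')|^6)$ to finish.

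Finally, if none of the above applies, $G'$ is $\{P_7,C_3,C_5,C_7\}$-free and, as already noted, $C_k$-free for every $k\ge 8$. Thus $G'$ has no odd cycle at all, so $G'$ is bipartite, and a $2$-coloring of $G'$ (which is in particular a $3$-coloring) can be produced in linear time by breadth-first search on each component. In all cases, once a $3$-coloring of $G'$ is obtained, I lift it to a $3$-coloring of $G$ in linear time via the reconstruction guaranteed by \ref{cleaning}. The running time is dominated by the shell step, giving the claimed bound of $O(|V(G)|^{18})$.

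There is essentially no obstacle beyond bookkeeping: the genuine mathematical work (the structural analysis of 7-gons, shells, and 5-gons in clean triangle-free $P_7$-free graphs, and the reduction to palette problems solvable by \ref{check} or \ref{checkSubsets}) has already been packaged into \ref{7gon}, \ref{shell}, and \ref{5gon}. The only point requiring a little care is verifying that the preprocessing hypotheses of those lemmas — namely that the input is clean and connected, and that cycles of larger forbidden length have been excluded before invoking the next lemma — are preserved along the cascade, which is immediate since none of the subroutines modify $G'$; they only test it and return a coloring or a certificate of non-colorability.
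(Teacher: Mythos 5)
Your proposal is correct and follows essentially the same route as the paper's own proof: clean via \ref{cleaning}, reduce to connected components, then cascade through \ref{7gon}, \ref{shell}, and \ref{5gon}, and finish by observing the remaining graph is bipartite, with the $O(|V(G)|^{18})$ bound dominated by the shell case. The only (harmless) difference is the order of the component split and the cleaning step.
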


\begin{proof}

By \ref{cleaning}, at the expense of carrying out a time $O(|V(G)|^5)$ procedure we may assume $G$ is clean. Via breadth-first search in time $O(|V(G)|^2)$ we can determine the components of $G$, and so we may also assume $G$ is connected. By \ref{7gon}, if $G$ contains a 7-gon, then in time $O(|V(G)|^{10})$ we can either produce a $3$-coloring of $G$, or determine that none exists. Hence, we may assume $G$ is a $\{P_7,C_3,C_7\}$-free graph. By \ref{shell}, if $G$ contains a shell, then in time $O(|V(G)|^{18})$ we can either produce a $3$-coloring of $G$, or determine that none exists. Hence, we may assume $G$ is a $\{P_7,C_3,C_7,shell\}$-free graph. By \ref{5gon}, if $G$ contains a 5-gon, then in time $O(|V(G)|^6)$ we can either produce a $3$-coloring of $G$, or determine that none exists. Hence, we may assume $G$ is a $\{P_7,C_3,C_5,C_7,shell\}$-free graph. Since $G$ is $P_7$-free, it follows that $G$ is $C_k$-free for all $k>7$. And so, $G$ is bipartite and in time $O(|V(G)|^3)$ we can produce a 2-coloring of $G$. This proves \ref{main}.

\end{proof}

\section{Acknowledgment}
We are grateful to Juraj Stacho for telling us about \ref{checkSubsets}, that
simplified  many of our arguments. We would also like to thank Paul Seymour for 
listening to (most of) the details of the proof of~\ref{lemma1}. Finally, we thank Alex Scott for telling us about the problem that this paper solves.

\end{document}